\newtheorem{theorem}{Theorem}[section]
\newtheorem{remark}[theorem]{Remark}
\newtheorem{lemma}[theorem]{Lemma}
\newtheorem{corollary}[theorem]{Corollary}
\title{Explicit results for Euler's factorial series in arithmetic progressions under GRH}
\author{Neea Paloj\"arvi\thanks{The work was supported by the Emil Aaltonen Foundation.}}
\affil{Department of Mathematics and Statistics, University of Helsinki, P.O.\ Box 68, 00014 Helsinki}
\affil{neea.palojarvi@helsinki.fi}
\date{}
\begin{document}
\maketitle

\allowdisplaybreaks
\begin{abstract} 
In this article, we study the Euler's factorial series $F_p(t)=\sum_{n=0}^\infty n!t^n$ in $p$-adic domain under the Generalized Riemann Hypothesis. First, we show that if we consider primes in $k\varphi(m)/(k+1)$ residue classes in the reduced residue system modulo $m$, then under certain explicit
extra conditions we must have $\lambda_0+\lambda_1F_p(\alpha_1)+\ldots+\lambda_kF_p(\alpha_k) \neq 0$ for at least one such prime. We also prove an explicit $p$-adic lower bound for the previous linear form. Secondly, we consider the case where we take primes in arithmetic progressions from more than $k\varphi(m)/(k+1)$ residue classes. Then there is an infinite collection of intervals each containing at least one prime which is in those arithmetic progressions and for which we have $\lambda_0+\lambda_1F_p(\alpha_1)+\ldots+\lambda_kF_p(\alpha_k) \neq 0$. We also derive an explicit $p$-adic lower bound for the previous linear form. 
\end{abstract}

\noindent \textbf{Keywords:} Euler's factorial series, $p$-adic, linear form, lower bounds, explicit estimate

\section{Introduction}

In the eighteenth century, Leonhard Euler studied factorial series (see. e.g. \cite{V2007}). His interest of these series has also lead us to study \textit{Euler's factorial series} which is defined as follows:
\begin{equation*}
    F(t) \coloneqq {}_2F_0(1,1|t)=\sum_{n=0}^\infty n!t^n.
\end{equation*}
Further, we write $F_p(t)=\sum_{n=0}^\infty n!t^n$ when we consider the Euler's factorial series in $p$-adic domain. It is in a class of series which can be written by $\sum_{n\geq 0} a_n n! t^n$, where $a_n$ satisfies certain properties. These series are called \textit{F-series} and they were introduced by V. G. Chirski\u{i} \cite{C1989,C1990}. Euler's factorial series has also connections to quantum physics (see e.g. \cite{BW2015,F1997}).

It is clear that using standard Archimedean metric, Euler's factorial series converges only when $t=0$. However, using $p$-adic metric, the series converges for all integers $t$. Hence, an interesting question is, how close to zero the term $F_p(t)$ can be using $p$-adic metric. Or even more, if we look at a linear form
\begin{equation}
\label{eq:DefLambda}
    \Lambda_p=\lambda_0+\lambda_1F_p(\alpha_1)+\lambda_2F_p(\alpha_2)+\ldots+\lambda_kF_p(\alpha_k),
\end{equation}
where $k \geq 1$, numbers $\lambda_j$ are integers, $\lambda_j \ne 0$ for at least one index $j$ and numbers $\alpha_j$ are distinct non-zero integers, can $\Lambda_p$ be zero? If not, how close is it to zero in $p$-adic metric?

D. Bertrand, V. G. Chirski\u{i} and J. Yebbou considered these problems for $F$-series over $\mathbb{K}(x)$ where $\mathbb{K}$ is an algebraic number field over $\mathbb{Q}$. In 2004, they showed (see \cite{BCY2004}) that assuming some extra conditions relating to a certain differential system, there is an infinite collection of intervals containing a prime $p$ such that for some valuation $v \mid p$ certain linear forms of the $F$-series are not zero. $v$-adic lower bounds for the linear forms were also given and the results were effective.

Now the question is: can we prove something similar but explicit considering linear forms in Euler's factorial series? May some other conditions help us to maintain these results or be useful to obtain the results? Or do we know any more precise results about the terms $F_p(t)$ which do not equal to zero?

In 2015, V. G. Chirski\u{i} \cite{C2015} proved that there exist infinitely many primes such that $F_p(1) \neq 0$. Three years later, T. Matala-aho and W. Zudilin \cite{MZ2018} showed the following result:
\begin{theorem}
\label{thm:Zudilin}
\cite[Theorem 1]{MZ2018}
Given $t \in \mathbb{Z}\setminus \{0\}$, let $R \subseteq \mathbb{P}$, where $\mathbb{P}$ denotes the set of primes, be such that
\begin{equation}
\label{eq:OldCOndC}
    \limsup_{n \to \infty} c^n n! \prod_{p \in R} \left|n!\right|_p^2 =0, \quad \text{where } c=c(t;R):=4 |t|\prod_{p \in R} |t|_p^2.
\end{equation}
Then either there exists a prime $p \in R$ for which $F_p(t)$ is irrational, or there are two distinct primes $p, q \in R$ such that $F_p(t) \neq F_q(t)$.
\end{theorem}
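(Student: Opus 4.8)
The plan is to run a classical Hermite–Padé argument and close it with the product formula over $\mathbb{Q}$. Assume neither alternative of the theorem holds: then every $F_p(t)$ with $p\in R$ is rational and they all coincide, say $F_p(t)=\xi=a/b$ with $a\in\mathbb{Z}$, $b\in\mathbb{Z}_{\ge 1}$. I would build, for each $n$, very good simultaneous polynomial approximations to the formal power series $F$ whose arithmetic is governed by factorials, then evaluate them at $x=t$ — archimedeanly through the polynomials, $p$-adically through the remainder series — to produce a numerical impossibility.

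For the approximations I would take the Laguerre polynomials $L_n$: set $\omega_n(t)=n!\,L_n(t)\in\mathbb{Z}[t]$, let $A_n(x)=x^n\omega_n(1/x)\in\mathbb{Z}[x]$ be its reciprocal polynomial (so $\deg A_n\le n$ and $A_n(0)=(-1)^n$), and let $B_n(x)\in\mathbb{Z}[x]$ collect the terms of $A_n(x)F(x)$ of degree $<2n$. Since $\int_0^\infty\omega_n(t)t^m e^{-t}\,dt=0$ for $m<n$, this forces $\deg B_n\le n-1$, while $\int_0^\infty\omega_n(t)t^m e^{-t}\,dt=(-1)^n(m!)^2/(m-n)!$ for $m\ge n$ yields the formal identity
\[
A_n(x)F(x)-B_n(x)=S_n(x):=(-1)^n\sum_{l\ge 0}\frac{((n+l)!)^2}{l!}\,x^{2n+l}.
\]
Two estimates are then needed. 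Archimedeanly, the coefficients of $A_n$ and $B_n$ are bounded by $2^n n!$ in absolute value, so $|A_n(t)|,|B_n(t)|\le (n+1)2^n n!\,|t|^n$. $p$-adically, for $p\in R$ the series $S_n^{(p)}(t):=(-1)^n\sum_{l\ge 0}\frac{((n+l)!)^2}{l!}t^{2n+l}$ converges in $\mathbb{Q}_p$ (the factorials driving $v_p$ of the $l$-th term to infinity), equals $A_n(t)F_p(t)-B_n(t)$ after substituting $x=t$ in the identity above, and satisfies $|S_n^{(p)}(t)|_p\le |n!|_p^2|t|_p^{2n}$ because $v_p\binom{n+l}{n}\ge 0$ gives $2v_p((n+l)!)-v_p(l!)\ge 2v_p(n!)$.

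Now put $N_n:=aA_n(t)-bB_n(t)\in\mathbb{Z}$. Since $F_p(t)=\xi$ for all $p\in R$, we have $N_n=b\bigl(A_n(t)\xi-B_n(t)\bigr)=b\,S_n^{(p)}(t)$, whence $|N_n|_p\le |n!|_p^2|t|_p^{2n}$ for every $p\in R$, and $|N_n|_\infty\le (|a|+|b|)(n+1)2^n n!\,|t|^n$. If $N_n\ne 0$, the product formula gives
\[
1\le |N_n|_\infty\prod_{p\in R}|N_n|_p\le (|a|+|b|)\,n!\Bigl(4|t|\prod_{p\in R}|t|_p^2\Bigr)^{n}\prod_{p\in R}|n!|_p^2
\]
(using $(n+1)2^n\le 4^n$), and the right-hand side tends to $0$ by \eqref{eq:OldCOndC}; hence $N_n=0$ for all large $n$. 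To reach a contradiction, a degree count identifies the Casoratian: $A_n(x)B_{n+1}(x)-A_{n+1}(x)B_n(x)$ is a polynomial of degree $\le 2n$ equal to the power series $A_{n+1}(x)S_n(x)-A_n(x)S_{n+1}(x)$, which has order $2n$ and $x^{2n}$-coefficient $A_{n+1}(0)(-1)^n(n!)^2=-(n!)^2$; therefore $A_n(x)B_{n+1}(x)-A_{n+1}(x)B_n(x)=-(n!)^2 x^{2n}$. Evaluating at $x=t\ne 0$ for two consecutive large $n$ and using $aA_n(t)=bB_n(t)$ yields $b\,(n!)^2 t^{2n}=0$, which is absurd.

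The main obstacle is the arithmetic of the remainder in the second step: checking that the coefficients $((n+l)!)^2/l!$ of $S_n$ are $p$-adic integers with $|{\cdot}|_p\le |n!|_p^2$ — this is exactly what produces the two factors $|n!|_p$ in \eqref{eq:OldCOndC} — together with pinning down the heights of $A_n$ and $B_n$ sharply enough that the product above is genuinely dominated by $c^n n!\prod_{p\in R}|n!|_p^2$ with $c=c(t;R)$. The Hermite–Padé construction and the non-vanishing of the Casoratian are then standard, and passing from the formal identity to its $p$-adic specialization at $x=t$ is routine once one knows $F_p(t)=\sum_n n!\,t^n$ converges in $\mathbb{Q}_p$.
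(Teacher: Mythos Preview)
Your argument is correct and is essentially the proof given in the original source \cite{MZ2018}: the Hermite--Pad\'e pair $(A_n,B_n)$ built from the (scaled) Laguerre polynomials, the remainder $S_n(x)=(-1)^n\sum_{l\ge 0}\frac{((n+l)!)^2}{l!}x^{2n+l}$ furnishing the factor $|n!|_p^2|t|_p^{2n}$ via $v_p\binom{n+l}{n}\ge 0$, the archimedean bound $(n+1)2^n n!|t|^n\le 4^n n!|t|^n$, and the Casoratian $A_nB_{n+1}-A_{n+1}B_n=-(n!)^2x^{2n}$ to kill the possibility $N_n=0$ for all large $n$. Note, however, that the present paper does \emph{not} prove Theorem~\ref{thm:Zudilin}; it is quoted verbatim from \cite[Theorem~1]{MZ2018} as background, so there is no ``paper's own proof'' to compare against here. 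One cosmetic point: in your final step, to obtain $b(n!)^2t^{2n}=0$ from $aA_n(t)=bB_n(t)$ for two consecutive $n$ you should multiply the two relations by $A_{n+1}(t)$ and $A_n(t)$ respectively (multiplying by the $B$'s would only give $a(n!)^2t^{2n}=0$, which fails when $\xi=0$); your stated conclusion is the right one, just make the cross-multiplication explicit.
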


In 2019, A.-M. Ernvall-Hyt\"onen, T. Matala-aho and L. Sepp\"al\"a \cite{EHMS2019} studied more about these type of conditions when they considered the case where $k =1$ and $\lambda_1 \ne 0$ in formula \eqref{eq:DefLambda}. They showed that if condition \eqref{eq:OldCOndC} is satisfied for all sets $R \setminus S$, where $S$ is any finite subset of $R$, then there exists infinitely many primes $p \in R$ such that $\Lambda_p \neq 0$. This raised a question: what kind of sets could satisfy condition \eqref{eq:OldCOndC}. Hence, in the same article, they showed that a set $R$, which consists of primes in more than $\varphi(m)/2$ residue classes in the reduced residue system modulo $m \geq 3$, satisfies the condition \eqref{eq:OldCOndC} and there are infinitely primes in $R$ such that $\Lambda_p \neq 0$. Even more, they also proved the following, conditional result:
\begin{theorem}
\label{thm:EHMaS}
\cite[Theorem 8]{EHMS2019}
Assume the Generalized Riemann Hypothesis. Let $m \geq 3$ be a given integer. Assume that $R$ is any union of primes in $\varphi(m)/2$ residue classes in reduced residue system modulo $m$. Then there is a value $d_m$ such that if $t$ is any non-zero integer satisfying the bound
\begin{equation*}
    4\left|t\right|\prod_{p \in R} \left|t \right|_p^2<d_m,
\end{equation*}
then there exists a prime number $p \in R$ for which $\lambda_0-\lambda_1F_p(t) \neq 0$. Here $\lambda_0$ and $\lambda_1 \neq 0$ are fixed integers. 
\end{theorem}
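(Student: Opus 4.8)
The plan is to separate a soft part, which reduces everything to the hypothesis of \Cref{thm:Zudilin}, from the analytic heart, which verifies that hypothesis under GRH.

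First I would observe that it suffices to produce a constant $d_m$, depending only on $m$, such that $c<d_m$ (with $c=4|t|\prod_{p\in R}|t|_p^2$) forces condition \eqref{eq:OldCOndC} for the set $R$. Indeed, if \eqref{eq:OldCOndC} holds for $R$ then \Cref{thm:Zudilin} guarantees that either $F_p(t)$ is irrational for some $p\in R$, or $F_p(t)\ne F_q(t)$ for two distinct primes $p,q\in R$. But if $\lambda_0-\lambda_1F_p(t)=0$ held for \emph{every} $p\in R$, then, since $\lambda_1\ne 0$, we would have $F_p(t)=\lambda_0/\lambda_1$ for all $p\in R$ — a single rational value shared by all these primes — contradicting both alternatives. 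Hence $\lambda_0-\lambda_1F_p(t)\ne 0$ for at least one $p\in R$.

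To verify \eqref{eq:OldCOndC} I would take logarithms: writing $v_p(\cdot)$ for the $p$-adic valuation, the $n$-th term under the $\limsup$ has logarithm $n\log c+\log n!-2\sum_{p\in R,\,p\le n}v_p(n!)\log p$, and since $\log n!=\sum_{p\le n}v_p(n!)\log p$ this equals $n\log c+\sum_{p\notin R,\,p\le n}v_p(n!)\log p-\sum_{p\in R,\,p\le n}v_p(n!)\log p$. Using Legendre's formula in the crude form $\tfrac np-1<\lfloor n/p\rfloor\le v_p(n!)=\frac{n-s_p(n)}{p-1}\le\frac n{p-1}$, where $s_p(n)$ is the base-$p$ digit sum of $n$, I would bound this from above by $n\log c+n\sum_{p\notin R,\,p\le n}\frac{\log p}{p-1}-n\sum_{p\in R,\,p\le n}\frac{\log p}{p}+\vartheta_R(n)$ with $\vartheta_R(n):=\sum_{p\in R,\,p\le n}\log p$. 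Here GRH enters, through the effective prime number theorem in arithmetic progressions in the form $\vartheta(x;m,a)=x/\varphi(m)+O(\sqrt x\,\log^2(mx))$: partial summation then gives $\vartheta_R(n)=\tfrac n2+O(\sqrt n\,\log^2(mn))$ and $\sum_{p\le n,\ p\equiv a}\frac{\log p}{p}=\frac1{\varphi(m)}\log n+C_a+O(n^{-1/2}\log^2 n)$ with explicit constants $C_a$. Because $R$ is a union of \emph{exactly} $\varphi(m)/2$ reduced residue classes, the $\frac1{\varphi(m)}\log n$ contributions cancel between the two sums, so $\sum_{p\notin R,\,p\le n}\frac{\log p}{p-1}-\sum_{p\in R,\,p\le n}\frac{\log p}{p}$ converges to an explicitly computable constant $\delta_R$. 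The logarithm above is therefore at most $n\bigl(\log c+\delta_R+\tfrac12\bigr)+o(n)$, which tends to $-\infty$ as soon as $\log c<-\delta_R-\tfrac12$. Taking $d_m:=\exp(-\delta_R-\tfrac12)$ — or the minimum of this quantity over all choices of $\varphi(m)/2$ reduced residue classes, if a value uniform in $R$ is wanted — gives the required threshold and, together with the reduction, proves the theorem.

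The main obstacle is not the structure but the explicitness the title promises: one needs fully effective versions of the prime number theorem in arithmetic progressions and of the resulting Mertens-type sums, with error terms and constants controlled sufficiently uniformly in $m$, and this is precisely where GRH is used. A secondary, genuinely fiddly point is that the crude two-sided bounds $\tfrac np-1<v_p(n!)\le\tfrac n{p-1}$ discard the exact contribution of the digit sums $s_p(n)$: each such term costs only $O(\log n/\log p)$, but their total over $p\le n$ is of order $n$ and so merges, together with $\vartheta_R(n)$, into the linear term $\delta_R n$. Recovering these terms via equidistribution of $s_p(n)$ over residue classes — whose dependence on $n$ is irregular — would enlarge $d_m$, but is not needed for the statement as given.
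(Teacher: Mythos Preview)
The statement you were asked to prove is \emph{not} proved in this paper: it is quoted from \cite{EHMS2019} as background, and the paper's own contribution is the explicit generalisation \Cref{thm:dmGene}. So there is no ``paper's own proof'' to compare against line by line; the right comparison is with the route the paper takes to \Cref{thm:dmGene}.

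Your argument is correct. The soft reduction via \Cref{thm:Zudilin} is clean: if $\lambda_0-\lambda_1F_p(t)=0$ for every $p\in R$ then all $F_p(t)$ equal the single rational $\lambda_0/\lambda_1$, contradicting both alternatives of that theorem. The analytic part is also fine: splitting $\log n!$ over all primes, bounding $v_p(n!)$ two-sidedly, and using GRH-level equidistribution to see that the two Mertens sums over $\varphi(m)/2$ classes each carry $\tfrac12\log n$, so that their difference converges to a constant $\delta_R$, while $\vartheta_R(n)=n/2+o(n)$. The resulting threshold $d_m=\min_R\exp(-\delta_R-\tfrac12)$ is positive because the minimum is over finitely many choices of residue classes.

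The paper's route to the generalisation is different in its black box: instead of invoking \Cref{thm:Zudilin}, it builds its own Pad\'e machinery (\Cref{thm:limSupCond}) and then, in Section~\ref{sec:proof2}, verifies the corresponding limsup condition \eqref{eq:limSupCond} via the explicit GRH estimates of Lemmas~\ref{lemma:estimateSumPNLog} and~\ref{lemma:Log}. What this buys is (i) a method that works for $k\ge 1$ points $\alpha_1,\dots,\alpha_k$, where the dichotomy of \Cref{thm:Zudilin} is no longer available, and (ii) fully explicit constants, as in the bound \eqref{eq:cUpper}. Your approach is shorter and perfectly adequate for the $k=1$ non-explicit statement, but it does not extend to linear forms in several $F_p(\alpha_j)$, and the constant $\delta_R$ you produce is existential rather than numerical.

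One small remark on your final paragraph: the digit-sum corrections $s_p(n)$ are irrelevant here --- your crude bounds already give a linear-in-$n$ upper estimate with the right leading coefficient, and since the goal is only $\limsup=0$ (not an explicit $d_m$), there is nothing to ``recover''. The comment about explicitness is about \Cref{thm:dmGene}, not about the statement you were asked to prove.
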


Further, very recently A.-M. Ernvall-Hyt\"onen, T. Matala-aho and L. Sepp\"al\"a \cite{EHMS2022} showed an explicit, $p$-adic lower bound for $\Lambda_p$ with $k=1$, $\lambda_1 \ne 0$ and $\alpha_1=\pm p^a$ when $p^a$ large enough. 

In 2020, L. Sepp\"al\"a \cite{S2020} generalized the results for the Euler's factorial series at algebraic integer points and for linear forms $\Lambda_p$ with $k \geq 1$. She showed that if a type \eqref{eq:OldCOndC} condition holds, then for some valuation $v$ we must have $\Lambda_v \neq 0$. She also proved explicit lower bounds for $\left|\Lambda_v\right|_v$ where numbers $p$ are in certain intervals. In addition to proving the previous results for sets of primes, L. Sepp\"al\"a considered primes in arithmetic progressions. She proved that if we consider a set $R$ which consists of primes in more than $k\varphi(m)/(k+1)$ residue classes in reduced residue system modulo $m$, then for some non-Archimedean valuation $v$ satisfying $v \mid p$ for some $p \in R$, we have $\Lambda_v \ne 0$. 

In this article, we first prove that if a type \eqref{eq:OldCOndC} condition holds for a set $R$ and for $\Lambda_p$ with $k \geq 1$, then $\Lambda_p \neq 0$ for some $p \in R$. This is done in Theorem \ref{thm:limSupCond} and the result is a slight improvement to Corollary 9.1 in \cite{S2020}. Then, in Theorem \ref{thm:dmGene} we prove an explicit version of Theorem \ref{thm:EHMaS} and generalize it to cases $k>1$, too. After that, in Theorem \ref{thm:lowerBound} we show explicit $p$-adic lower bounds for the terms $\left|\Lambda_p\right|_p$, where numbers $p$ are in arithmetic progressions in $k\varphi(m)/(k+1)$ residue classes up to some height which depends on the linear form. Finally, in Theorem \ref{thm:lowerBoundMore} we prove $p$-adic lower bounds for $\left|\Lambda_p\right|_p$ also in the case where we take primes in arithmetic progressions in certain intervals from more than $k\varphi(m)/(k+1)$ residue classes. In addition to the main results, some corollaries for functions 
\begin{equation}
\label{def:pitheta}
\pi(x;m,a):=\sum_{\substack{p\leq x \\ p\equiv a\pmod m }} 1, \qquad \theta(x;m,a):=\sum_{\substack{p\leq x \\ p\equiv a\pmod m }}\log{p}
\end{equation}
and related functions are derived in Sections \ref{sec:piTheta} and \ref{sec:EstDepend}. 

Throughout the article $p$ denotes a prime number and $\mathbb{P}$ is a set of prime numbers. We also use abbreviation RH for the Riemann hypothesis and GRH for the Generalized Riemann Hypothesis.

\section{Results}
\label{sec:results}

In this section, we describe some definitions, notations and the main results. 

First we define terms $c_1$ and $c_2$ for integers $k, \alpha_1,\ldots, \alpha_k$ and set $R$:
\begin{equation}
    \label{def:c1}
    c_1\coloneqq c_1(\overline{\alpha})=2^{k}\left(\max_{1\leq j \leq k}\{|\alpha_j|\}\right)^{k} \quad\text{and}\quad c_2\coloneqq c_2(\overline{\alpha}; R)\coloneqq c_1\prod_{p \in R}\left(\max_{1 \leq i \leq k}\{\left|\alpha_i\right|_p\}\right)^{k+1}.
\end{equation}

The first result tells that if a certain condition holds, then a linear form of functions $F_p$ must be non-zero.

\begin{theorem}
\label{thm:limSupCond}
Let $k \geq 1$ be an integer and let $\lambda_0, \lambda_1, \ldots, \lambda_k$ be integers such that $\lambda_i \neq 0$ for at least one index $i$. Further, assume that numbers $\alpha_1,\alpha_2,\ldots, \alpha_k$ are non-zero integers. Let $R$ be a non-empty set of prime numbers such that condition
\begin{equation}
\label{eq:limSupCond}
    \limsup_{n \to \infty} c_2(\overline{\alpha}; R)^n(kn+k+1)!\prod_{p \in R}\left|(kn)!n!\right|_p=0
\end{equation}
holds. Then for at least one prime number $p \in R$ we have
$
    \lambda_0+\lambda_1F_p(\alpha_1)+\ldots+\lambda_kF_p(\alpha_k) \neq 0.
$
\end{theorem}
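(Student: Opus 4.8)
The plan is to argue by contradiction using a Padé-type (Hermite–Mahler) construction together with an adelic product formula, in the spirit of Matala-aho–Zudilin and Seppälä. Suppose that $\Lambda_p = \lambda_0 + \lambda_1 F_p(\alpha_1) + \cdots + \lambda_k F_p(\alpha_k) = 0$ for \emph{every} $p \in R$. For each positive integer $n$ I would invoke the standard simultaneous Padé approximations to the functions $\sum_{\nu\ge 0}\nu!\,t^\nu$ at the points $\alpha_1,\dots,\alpha_k$: there exist integer polynomials (or integers) $A_{0,n}, A_{1,n},\dots,A_{k,n}$, not all zero, of controlled size, and remainders $r_{j,n}$ such that the combination $A_{0,n} + \sum_{j=1}^k A_{j,n} F_p(\alpha_j)$ equals a tail $R_n(p)$ whose $p$-adic size is governed by $(kn)!\,n!$ and whose Archimedean size is governed by $(kn+k+1)!$ and the $\alpha_j$. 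The constants $c_1, c_2$ in \eqref{def:c1} are exactly the bookkeeping constants that package $2^k$, $\bigl(\max_j|\alpha_j|\bigr)^k$ and the non-Archimedean contributions $\bigl(\max_i|\alpha_i|_p\bigr)^{k+1}$.

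Next I would form the auxiliary quantity
\[
\Omega_n \;=\; A_{0,n}\lambda_0 \;+\; \sum_{j=1}^{k} \bigl(A_{0,n}\lambda_j - \lambda_0 A_{j,n}\bigr)\cdot(\text{something})
\]
— more precisely, the standard trick is to eliminate $F_p(\alpha_j)$ between the relation $\Lambda_p = 0$ and the Padé relation, producing a \emph{nonzero rational integer} $\Omega_n$ (nonzero because the Padé polynomials are linearly independent enough and $\lambda_i\neq 0$ for some $i$, which prevents a total collapse; this is where one checks the remainder does not vanish identically). Then $\Omega_n$ is a nonzero integer, so by the product formula $\prod_{v}|\Omega_n|_v = 1$, i.e.
\[
1 \;=\; |\Omega_n|_\infty \cdot \prod_{p\in R}|\Omega_n|_p \cdot \prod_{p\notin R}|\Omega_n|_p \;\le\; |\Omega_n|_\infty \cdot \prod_{p\in R}|\Omega_n|_p,
\]
since the primes outside $R$ only contribute factors $\le 1$. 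Now I would estimate: $|\Omega_n|_\infty$ is bounded by a constant times $c_1^n (kn+k+1)!$ (Archimedean size of Padé data times remainder), and for $p\in R$, using $\Lambda_p=0$ we get $|\Omega_n|_p \le \bigl(\max_i|\alpha_i|_p\bigr)^{(k+1)n}\,|(kn)!\,n!|_p$ up to harmless factors. Multiplying, the right-hand side is $\le C\cdot c_2(\overline\alpha;R)^n (kn+k+1)!\prod_{p\in R}|(kn)!\,n!|_p$ for some fixed constant $C$.

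By hypothesis \eqref{eq:limSupCond} this last expression has $\limsup$ equal to $0$, so for $n$ large it is $<1$, contradicting $\prod_v|\Omega_n|_v = 1$ for a nonzero integer. This contradiction shows $\Lambda_p\neq 0$ for at least one $p\in R$, proving the theorem.

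The step I expect to be the main obstacle is verifying that $\Omega_n$ is genuinely nonzero for infinitely many $n$: one must show that the elimination of the $F_p(\alpha_j)$ against the Padé relations cannot produce the zero integer identically. This is the familiar "non-vanishing of the determinant / remainder" issue in Padé approximation proofs; here it should follow from the explicit construction of the Hermite–Padé approximants to $_2F_0(1,1|\,t)$ at distinct points $\alpha_1,\dots,\alpha_k$ (their remainders have exact order of vanishing, hence are not identically zero), combined with the assumption that some $\lambda_i\neq 0$. A secondary technical point is tracking the precise polynomial-in-$n$ and constant factors so that they are absorbed into the $\limsup$ — but since \eqref{eq:limSupCond} is a $\limsup = 0$ statement, any fixed constant and any $n^{O(1)}$ factor is harmless, which is why the slightly cleaner constants $c_1,c_2$ suffice and why this improves Corollary 9.1 of \cite{S2020}.
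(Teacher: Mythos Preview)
Your proposal is correct and follows essentially the same route as the paper: assume $\Lambda_p=0$ for all $p\in R$, multiply $\Lambda_p$ by the common Pad\'e denominator $B_{n,\mu,0}(1)$ to obtain the integer $T(n,\mu)=\sum_{j=0}^k\lambda_jB_{n,\mu,j}(1)$ (your $\Omega_n$), bound $|T(n,\mu)|_\infty$ via Lemma~\ref{lemma:BSEstimates} and $|T(n,\mu)|_p$ for $p\in R$ via the remainder estimate, and contradict the product formula using \eqref{eq:limSupCond}. The one point where the paper is more specific than you is the non-vanishing step you flagged as the main obstacle: rather than arguing from the order of vanishing of the remainders, the paper carries an extra shift parameter $\mu\in\{0,1,\dots,k\}$ in the Pad\'e construction (Theorem~\ref{thm:Pade}) and invokes \cite[Lemma 6.2]{S2020} to guarantee that for each $n$ some choice of $\mu$ makes $T(n,\mu)\neq 0$; this is exactly the determinant argument you anticipated, packaged as a ready-made lemma.
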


The previous result raises a question for what kind of non-empty set $R$ the condition \eqref{eq:limSupCond} holds. Next we prove that primes in enough many arithmetic progressions satisfy the condition if one extra assumption for the number $c_2$ is assumed.

\begin{theorem}
\label{thm:dmGene}
Assume that $k \geq 1$, $m \geq 3$ and $\lambda_0, \lambda_1, \ldots,\lambda_k$ be integers such that $\lambda_j \neq 0$ for at least one index $j$. Even more, assume RH and that the $m$th cyclotomic Dedekind zeta function satisfies the GRH. Further, suppose that $R$
is any union of primes in at least $k\varphi(m)/(k+1)$ residue classes in the reduced residue system modulo $m$.

If $\alpha_1, \alpha_2,\ldots, \alpha_k$ are any non-zero integers for which $c_2(\overline{\alpha}; R)$ satisfies the bound
\begin{equation}
\label{eq:cUpper}
   c_2(\overline{\alpha}; R) <\left(ke^{1+6.550\varphi(m)}m^{\varphi(m)}\right)^{-k},
\end{equation}
then there exists a prime number $p \in R$ for which 
\begin{equation*}
    \lambda_0+\lambda_1F_p(\alpha_1)+\ldots+\lambda_kF_p(\alpha_k)\neq 0.
\end{equation*}
\end{theorem}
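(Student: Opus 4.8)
The plan is to deduce Theorem~\ref{thm:dmGene} from Theorem~\ref{thm:limSupCond} by verifying that the hypothesis \eqref{eq:cUpper} on $c_2(\overline{\alpha};R)$ forces the $\limsup$ condition \eqref{eq:limSupCond} to hold. So the whole argument reduces to estimating
\[
  A_n := c_2(\overline{\alpha}; R)^n(kn+k+1)!\prod_{p \in R}\left|(kn)!\,n!\right|_p
\]
from above and showing $A_n \to 0$. First I would bound the $p$-adic part: by Legendre's formula, for any prime $p$ one has $|N!|_p = p^{-(N - s_p(N))/(p-1)} \ge p^{-N/(p-1)}$, so $\prod_{p \in R}|(kn)!\,n!|_p \ge \prod_{p\in R} p^{-((k+1)n)/(p-1)}$. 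Taking logarithms, $-\log\prod_{p\in R}|(kn)!\,n!|_p \le (k+1)n\sum_{p\in R}\frac{\log p}{p-1}$, and the key point is that since $R$ consists of primes in a bounded number of residue classes modulo $m$, the partial sums $\sum_{p\le x,\,p\in R}\frac{\log p}{p-1}$ grow only like a constant times $\log x$ plus lower-order terms; quantitatively, under RH/GRH one gets an explicit bound of the shape $\sum_{p\le x,\,p\in R}\frac{\log p}{p-1} \le \frac{\#\text{classes}}{\varphi(m)}\log x + (\text{explicit constant depending on }m)$. This is exactly where the corollaries on $\theta(x;m,a)$ promised in Sections~\ref{sec:piTheta} and \ref{sec:EstDepend} enter, and where the constants $e^{1+6.550\varphi(m)}$ and $m^{\varphi(m)}$ in \eqref{eq:cUpper} come from.

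Next I would handle the growth of $(kn+k+1)!$ by Stirling: $\log (kn+k+1)! = (kn)\log(kn) - kn + O(n\log n\text{-type lower terms})$, more precisely $(kn+k+1)! \le C\,(kn)^{kn}e^{-kn}\cdot(\text{polynomial in }n)$ for an explicit $C$. Combining the three pieces, $\log A_n$ is bounded above by
\[
  n\log c_2 + kn\log(kn) - kn + (k+1)n\!\!\sum_{p\le kn,\,p\in R}\!\!\frac{\log p}{p-1} + (\text{lower order}).
\]
Since $R$ has at most $k\varphi(m)/(k+1)$ classes, the sum over $p$ contributes at most $\frac{k}{k+1}\log(kn)$ per unit, so the $(k+1)n$ factor turns it into $kn\log(kn)$ plus an explicit additive constant times $n$; this $kn\log(kn)$ term is then \emph{cancelled} against... no: it adds to the Stirling term, giving roughly $2kn\log(kn)$. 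To beat this I need the factor $c_2^n$ to supply a decay of order $(kn)^{-2kn}$-ish. Here the exponent structure of \eqref{eq:cUpper} is designed precisely so that $\log c_2 < -k\big(1 + 6.550\varphi(m) + \varphi(m)\log m\big)$, and I would feed this into the inequality together with the observation that for $n$ large the $\log(kn)$ terms are themselves absorbed: the honest claim is that $A_n$ is eventually dominated by a geometric-times-polynomial expression with ratio $<1$. I expect the bookkeeping to require splitting the prime sum at $x = kn$ versus small primes, and using an explicit Mertens-type estimate $\sum_{p\le x}\frac{\log p}{p-1} \le \log x + (\text{const})$ as the crude global bound wherever the residue-class refinement is not available near the start.

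The main obstacle is the explicit constant-tracking in the second displayed line: I must control $\sum_{p\le x,\,p\in R}\frac{\log p}{p-1}$ with a fully explicit error term under the stated GRH hypothesis (for the $m$th cyclotomic Dedekind zeta function) and RH, and verify that the resulting additive constant is genuinely $\le 1 + 6.550\varphi(m) + \varphi(m)\log m$ after all the Stirling and Legendre slack is accounted for. This is where partial summation against the explicit form of $\theta(x;m,a)$ is needed, and where a clean inequality of the form $\theta(x;m,a) \le \frac{x}{\varphi(m)} + c\sqrt{x}(\log x)^2 + \dots$ must be converted into a bound on the weighted sum $\sum \frac{\log p}{p-1}$. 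Once that explicit lemma is in hand (presumably one of the corollaries of Sections~\ref{sec:piTheta}--\ref{sec:EstDepend}), the rest is a direct, if tedious, verification that $\limsup_{n\to\infty} A_n = 0$, after which Theorem~\ref{thm:limSupCond} applies verbatim to give the conclusion.
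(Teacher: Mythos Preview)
Your overall strategy is exactly the paper's: reduce to Theorem~\ref{thm:limSupCond} by showing that \eqref{eq:limSupCond} holds, using Stirling for the factorial and the explicit GRH-conditional estimate for $\sum_{p\in R,\,p\le x}\frac{\log p}{p-1}$ (Lemma~\ref{lemma:estimateSumPNLog} and Lemma~\ref{lemma:Log}). However, there is a genuine sign error that derails your calculation. You invoke the Legendre lower bound $|N!|_p\ge p^{-N/(p-1)}$, which gives only a \emph{lower} bound on $\log\prod_{p\in R}|(kn)!n!|_p$; to bound $\log A_n$ from above you need the \emph{upper} bound $|N!|_p\le p^{-N/(p-1)+\log_p N+1}$ (cf.\ the proof of Lemma~\ref{lemma:Log}). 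More damagingly, your first instinct (``this $kn\log(kn)$ term is then cancelled against\dots'') was right and your correction (``no: it adds'') is wrong: since $\log\prod_{p\in R}|(kn)!n!|_p$ is negative with main term $\approx -kn\log n$, it \emph{cancels} the $+kn\log n$ from Stirling, and the leading contribution to $\log A_n$ is of order $n$, not $n\log n$.

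Concretely, with at least $\tfrac{k\varphi(m)}{k+1}$ residue classes (you wrote ``at most'', which is backwards: more classes make the $p$-adic product smaller, which helps), Lemma~\ref{lemma:Log} gives
\[
\log\!\prod_{p\in R}|(kn)!n!|_p \;\le\; -\frac{k\varphi(m)}{k+1}\cdot\frac{kn\log(kn)+n\log n}{\varphi(m)} + k\varphi(m)(\log m+6.550)n + O\!\Big(\tfrac{n}{\log n}\Big),
\]
and the first term equals $-kn\log n - \tfrac{k^2}{k+1}n\log k$. Adding Stirling's $kn\log n + kn(\log k-1)+O(\log n)$ and $n\log c_2$, the $n\log n$ terms vanish and one is left with
\[
\log A_n \;<\; n\Big(\log c_2 + k\big(\log k + (\log m+6.550)\varphi(m) + 1\big)\Big) + O\!\Big(\tfrac{n}{\log n}\Big),
\]
which is exactly $-n\,D(m,k,\overline{\alpha};R)+O(n/\log n)$. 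Hypothesis~\eqref{eq:cUpper} says precisely that the bracketed constant is negative, so $\log A_n\to -\infty$. There is no need for $c_2$ to beat anything like $(kn)^{-2k}$; once you fix the sign, the constants in \eqref{eq:cUpper} fall out exactly.
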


\begin{remark}
\label{remark:ExistsC}
We notice that there actually are sets $R$ such that condition \eqref{eq:cUpper} holds: For example, let $k \geq 1$ and $m\geq 3$. Further, let $R$ be a set containing all primes which are congruent to $a$ modulo $m$, where $\gcd(a,m)=1$, for at least $k\varphi(m)/(k+1)$ different integers $a$ that are not congruent to each other modulo $m$. Let us set $\alpha_i=ip$ for all indices $i$, where  $p\in R$ such that $p> \left(2k^2e^{1+6.550\varphi(m)}m^{\varphi(m)}\right)^{k}$. We can always find such a prime since there are infinitely many primes that are congruent to $a$ modulo $m$. Then inequality \eqref{eq:cUpper} is satisfied.
\end{remark}

\begin{remark}
It is essential to use number $c_2$ given in \eqref{def:c1} instead of number $c_2$ in \cite[Theorem 3.1]{S2020} since the later one is always at least one and the upper bound obtained in Theorem \ref{thm:dmGene} is smaller than one. This also justifies the need to prove Theorem \ref{thm:limSupCond}.
\end{remark}

Since the previous theorem tells us that under certain conditions there is a prime such that a linear form does not equal to zero, a natural question is, how close it is to zero and how large the possible prime number is. To answer this question, we first denote
\begin{equation}
\label{eq:DefD}
    D(m,k,\overline{\alpha}; R):=-\left(\log c_2(\overline{\alpha}; R)+k\left(\log k+\left(\log m+6.550\right)\varphi(m)+1\right)\right).
\end{equation}
Now we are ready to describe the next result:

\begin{theorem}
\label{thm:lowerBound}
Assume that numbers $k,\lambda_j$ and $m$ and set $R$ are defined as in Theorem \ref{thm:dmGene}. Assume also RH and that the $m$th cyclotomic Dedekind zeta function satisfies the GRH and that $\alpha_1,\alpha_2, \ldots, \alpha_k$ are pairwise distinct non-zero integers such that $c(\overline{\alpha}; R')$, where
\begin{equation*}
    R':=R\cap\left[2, k\left(\max\{1865,m^{\varphi(m)}, \max_{1 \leq j \leq k} \{|\alpha_j|\}+2\right) \right],
\end{equation*} 
is smaller than the right-hand side of inequality \eqref{eq:cUpper}.
Suppose also that
$\max_{0 \leq i\leq k} \{\left|\lambda_i\right|\} \leq H$, where 
\begin{equation}
\label{eq:BoundH}
    \frac{\log H}{D(m,k,\overline{\alpha}; R')} 
    \geq \max\left\{1866, m^{\varphi(m)}+1,\max_{1\leq j \leq k}\{|\alpha_j|\}+1, e^{\frac{2\left(2.539k\varphi(m)+5.724k+0.054\right)}{D(m,k,\overline{\alpha}; R')}}+1\right\}.
\end{equation} 
Further, let 
\begin{equation}
\label{eq:interval}
R'':=R \cap \left[2,\frac{2k\log H}{D(m,k,\overline{\alpha}; R')}+2k\right).
\end{equation}

Then, for some $p \in R''$ we have
\begin{multline*}
    \left|\lambda_0+\lambda_1F_p(\alpha_1)+\ldots+\lambda_kF_p(\alpha_k)\right|_p\\
    >H^{-\left(\frac{2k\log\log H}{D(m,k,\overline{\alpha}; R')}+\frac{-2k\log D(m,k,\overline{\alpha}; R')+2k^2+3k\log 2-2k}{D(m,k,\overline{\alpha}; R')}+\frac{4k\left(2.539k\varphi(m)+5.724k+0.054\right)}{D(m,k,\overline{\alpha}; R')^2}+0.015k^2+ 0.235 k+1.034 \right)}.
\end{multline*}
\end{theorem}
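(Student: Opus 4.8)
The plan is to combine the qualitative non-vanishing result of Theorem~\ref{thm:dmGene} with a quantitative Padé-type construction and an explicit counting estimate for primes in arithmetic progressions under GRH. The starting point is the standard observation that Euler's factorial series $F_p(t)$ is the $p$-adic limit of the partial sums $\sum_{n=0}^{N} n! t^n$, and that there is a good simultaneous rational approximation to the family $F_p(\alpha_1),\ldots,F_p(\alpha_k)$ built from a single Padé-type construction (the ``$F$-series'' machinery of Chirski\u{i} and its effective refinements by Matala-aho--Zudilin, Ernvall-Hyt\"onen--Matala-aho--Sepp\"al\"a and Sepp\"al\"a). Concretely, for a parameter $n$ one constructs integers $B_n$ and $A_{0,n},\ldots,A_{k,n}$, of Archimedean size roughly $c_1^{n}(kn+k+1)!$, such that $B_n \Lambda_p - (\lambda_0 A_{0,n}+\cdots+\lambda_k A_{k,n})$ has large $p$-adic valuation, governed by $\prod_{p\in R}|(kn)!\,n!|_p$ and hence by the function appearing in \eqref{eq:limSupCond}. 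The linear form $\Lambda_p$ is an integer in $\mathbb{Z}_p$, so if it is non-zero then $|\Lambda_p|_p \geq |B_n\Lambda_p|_p$ divided by $|B_n|_p$, and one bounds $|B_n|_p$ from below trivially while bounding the ``remainder'' $B_n\Lambda_p - \sum \lambda_i A_{i,n}$ from above both Archimedeanly (via $H$ and $c_1^n(kn+k+1)!$) and $p$-adically.

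The heart of the argument is choosing $n$ and the prime $p$ correctly. First I would invoke Theorem~\ref{thm:dmGene} (with $R$ replaced by the truncated set $R'$, which by hypothesis still satisfies \eqref{eq:cUpper}) to guarantee that $\Lambda_p\neq 0$ for \emph{some} prime $p$ in $R'$, and then I would track \emph{how large} that prime must be and \emph{which} $n$ makes the remainder estimate non-trivial. The explicit prime-counting input is the GRH-conditional estimate for $\theta(x;m,a)$ and $\pi(x;m,a)$ from Sections~\ref{sec:piTheta}--\ref{sec:EstDepend}, which forces a prime in each of the chosen $k\varphi(m)/(k+1)$ residue classes below a bound essentially of the form $e^{O(\varphi(m))}m^{\varphi(m)}$; this is where the constants $6.550$, $2.539$, $5.724$, $0.054$ and the $m^{\varphi(m)}$ terms in \eqref{eq:BoundH} enter. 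One sets $n$ to be of size roughly $\log H / D(m,k,\overline{\alpha};R')$ so that the Archimedean size $H\cdot c_1^n (kn+k+1)!$ of the remainder is still beaten by its $p$-adic smallness; the interval in \eqref{eq:interval} is then exactly $[2, 2kn+2k)$ for that choice of $n$, so the prime supplied by the counting estimate lies in $R''$. Plugging $n\approx \log H/D$ into the product formula, using Stirling to turn $(kn+k+1)!$ into $k n \log(kn) + O(n)$, i.e. $\approx \frac{2k\log H}{D}\log\frac{2k\log H}{D}$, and collecting the $\log\log H$, $\log D$, and constant-order terms, yields precisely the exponent displayed in the theorem; the lower-order error terms $0.015k^2+0.235k+1.034$ and the $\tfrac{4k(\ldots)}{D^2}$ term come from carefully bounding the Stirling remainder, the factors $2^k$ inside $c_1$, and the slack in the prime-counting estimate.

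The main obstacle is bookkeeping the three competing explicit quantities simultaneously: the Archimedean growth $c_1^n(kn+k+1)!$, the $p$-adic decay $\prod_{p\in R'}|(kn)!\,n!|_p$, and the threshold height below which GRH guarantees a prime in every relevant residue class. Each of these must be made explicit with sharp enough constants that the final exponent is of the stated shape, and the interplay is delicate because $n$ depends on $H$ through $D(m,k,\overline{\alpha};R')$ while the admissible prime range also depends on $m^{\varphi(m)}$; reconciling these is why the hypothesis \eqref{eq:BoundH} has the particular max of four terms (ensuring $H$ is large enough that the chosen $n$ is a legitimate positive integer, that the prime lies in the truncation $R'$, that the counting estimate applies, and that the resulting exponent is meaningful). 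A secondary technical point is verifying that replacing $R$ by $R'$ (and by $R''$) does not destroy the non-vanishing: one needs that $c_2(\overline{\alpha};R')\geq c_2(\overline{\alpha};R)$ trivially but that the relevant upper bound \eqref{eq:cUpper} is still met by the truncated set, which is exactly the assumption imposed in the statement. Once all three estimates are in hand, the proof is a matter of substituting $n=\lceil \log H/D\rceil$ (or a nearby integer), applying the fundamental non-vanishing estimate $|\Lambda_p|_p \geq |\text{remainder}|_p / |B_n|_p$, and simplifying via Stirling.
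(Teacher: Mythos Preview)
Your general framework is right---Pad\'e construction, Stirling, GRH input, and $n\approx\log H/D$---but the mechanism by which the lower bound on $|\Lambda_p|_p$ is actually extracted is garbled, and that is the heart of the proof. You cannot invoke Theorem~\ref{thm:dmGene} as a black box and then separately bound $|\Lambda_p|_p$: the non-vanishing and the lower bound come from the \emph{same} contradiction. One assumes $|B_{n+1,\mu,0}(1)\Lambda_p|_p\le\bigl|\sum_j\lambda_jS_{n+1,\mu,j}(1)\bigr|_p$ for \emph{every} $p\in R''$ and applies the product formula $1=|T(n+1,\mu)|\prod_p|T(n+1,\mu)|_p$ to the nonzero \emph{integer} $T(n+1,\mu)=\sum_j\lambda_jB_{n+1,\mu,j}(1)$, obtaining $1\le |T|\prod_{p\in R''}|S|_p<1$ for the specific $n$ of Lemma~\ref{lemma:contra}. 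This contradiction singles out a prime $p\in R''$ for which $|B\Lambda_p|_p>|S|_p$; for \emph{that} $p$ the ultrametric gives $|T|_p=|B\Lambda_p|_p\le|\Lambda_p|_p$, and since $T$ is a nonzero integer, $|\Lambda_p|_p\ge |T|_p\ge 1/|T(n+1,\mu)|$ with the \emph{Archimedean} absolute value on the right.

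Your stated inequality ``$|\Lambda_p|_p\ge|\text{remainder}|_p/|B_n|_p$'' goes the wrong way: the remainder $S$ has \emph{small} $p$-adic norm, so this would yield a bound tending to~$0$. Moreover, $S_{n,\mu,j}(1)$ is an infinite $p$-adic series with no Archimedean meaning, so ``bounding the remainder Archimedeanly via $H$ and $c_1^n(kn+k+1)!$'' is not possible; those bounds belong to $T$ and to the $B_{n,\mu,j}$, not to $S$. Finally, the GRH prime-counting enters chiefly through Lemma~\ref{lemma:Log} to estimate $\log\prod_{p\le x,\,p\equiv a}|n!|_p$ inside the product-formula contradiction, not primarily to locate a prime; the mere existence of a prime in $R''$ is the separate and easier Lemma~\ref{lemma:exists}. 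Once you have the correct inequality $|\Lambda_p|_p\ge 1/|T(n+1,\mu)|$, your plan of substituting $n<2\log H/D$ (Lemma~\ref{lemma:UpperBoundsN}) and applying Stirling to $|T|\le(k+1)H\max_j|B_{n+1,\mu,j}(1)|$ is exactly what the paper does.
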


\begin{remark}
According to Remark \ref{remark:ExistsC}, taking suitable numbers $\alpha_j$ and a large enough $H$ it is possible to find number $c_2(\overline{\alpha}; R')$ such that inequality \eqref{eq:cUpper} holds and bound \eqref{eq:BoundH} is satisfied.
\end{remark}

The previous lower bound is quite long and includes the term $D(m,k,\overline{\alpha}; R')$ which may be difficult to determine. Hence, we prove a little bit weaker but simpler result:

\begin{corollary}
\label{corollary:lowerBound}
Let us use the same definitions and assumptions as in Theorem \ref{thm:lowerBound}. Then, for some prime number $p \in R'$ we have 
\begin{equation*}
    \left|\lambda_0+\lambda_1F_p(\alpha_1)+\lambda_2F_p(\alpha_2)+\ldots+\lambda_kF_p(\alpha_k)\right|_p>H^{-\left(0.0008 k^2+0.055k+0.675\right)(\log\log H)^2}.
\end{equation*}
\end{corollary}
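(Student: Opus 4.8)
The plan is to derive Corollary~\ref{corollary:lowerBound} directly from Theorem~\ref{thm:lowerBound} by bounding the (unwieldy) exponent appearing there in terms of $\log\log H$ alone. First I would recall from the hypotheses of Theorem~\ref{thm:lowerBound} that, since $c_2(\overline{\alpha};R')$ is less than the right-hand side of \eqref{eq:cUpper}, we have $\log c_2(\overline{\alpha};R') < -k(\log k + (\log m + 6.550)\varphi(m) + 1)$, so the quantity $D(m,k,\overline{\alpha};R')$ defined in \eqref{eq:DefD} is strictly positive; moreover the bound \eqref{eq:BoundH} on $\log H / D(m,k,\overline{\alpha};R')$ forces $D(m,k,\overline{\alpha};R')$ to be small relative to $\log H$, which is exactly the leverage needed. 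The key point is that every term in the exponent of Theorem~\ref{thm:lowerBound} that carries a factor $1/D$ or $1/D^2$ can be re-expressed using the substitution $\log H / D \le$ (something), or rather its contrapositive: from \eqref{eq:BoundH} we get $1/D \le \log(\log H / D)/\bigl(2(2.539k\varphi(m)+5.724k+0.054)\bigr)$ and hence, crudely, $1/D \le \log\log H$ up to the explicit constants, since $\log(\log H/D) \le \log\log H$.

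Concretely, I would bound the six summands in the exponent of Theorem~\ref{thm:lowerBound} one at a time. The dominant term $2k\log\log H / D$ becomes at most a constant multiple of $k(\log\log H)^2$ using $1/D \lesssim \log\log H$. The term involving $-2k\log D + 2k^2 + 3k\log 2 - 2k$ over $D$: here $-\log D$ is at most $\log(\log H) \le \log\log H$ after using $D \ge 1/\log\log H$ type bounds, and dividing by $D$ gives another $(\log\log H)^2$ contribution with a $k^2$ coefficient from the $2k^2/D$ piece. The $D^{-2}$ term $4k(2.539k\varphi(m)+5.724k+0.054)/D^2$ needs the inequality $(2.539k\varphi(m)+5.724k+0.054)/D \le \tfrac12\log(\log H/D) \le \tfrac12\log\log H$ coming directly from \eqref{eq:BoundH}, which collapses this to a constant times $k/D \cdot \log\log H \lesssim k(\log\log H)^2$; note one of the two $1/D$ factors is eaten by the $\log\log H$ bound in \eqref{eq:BoundH} and the other by the crude $1/D \le \log\log H$. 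Finally the three terms already free of $D$, namely $0.015k^2 + 0.235k + 1.034$, are absorbed into the leading $(\log\log H)^2$ estimate since $\log\log H \ge 1$ for $H$ large (which \eqref{eq:BoundH} guarantees, as it forces $\log H \ge 1866 \cdot D \ge 1866$). Collecting all contributions and tracking the numerical constants should yield an exponent bounded by $(0.0008 k^2 + 0.055 k + 0.675)(\log\log H)^2$; the constants $0.0008$, $0.055$, $0.675$ presumably come out of carefully adding the $k^2$-, $k$-, and constant-coefficients from each of the six pieces after the above substitutions.

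Since $R'' \subseteq R'$ (the interval in \eqref{eq:interval} is contained in the interval defining $R'$ once \eqref{eq:BoundH} is in force, because $2k\log H/D + 2k \le 2k\log H/D \cdot(\text{something}) \le k\max\{1865, m^{\varphi(m)}, \max_j|\alpha_j|+2\}$ follows from the first three alternatives in \eqref{eq:BoundH}), the prime $p$ produced by Theorem~\ref{thm:lowerBound} in $R''$ also lies in $R'$, so the statement for $p \in R'$ is immediate; alternatively one just weakens ``$p \in R''$'' to ``$p \in R'$'' directly.

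I expect the main obstacle to be purely bookkeeping: verifying that the constants $0.0008$, $0.055$, and $0.675$ are actually valid, i.e.\ that every one of the six exponent-terms, after the substitutions driven by \eqref{eq:BoundH}, contributes at most its allotted share of the $k^2$-, $k^1$-, and $k^0$-budget of $(\log\log H)^2$. The delicate steps are (i) correctly using \eqref{eq:BoundH} in the form $\log(\log H/D) \ge 2(2.539k\varphi(m)+5.724k+0.054)/D$ to kill one factor of $1/D$ in the $D^{-2}$ term without wasting the inequality, (ii) justifying $-\log D \le \log\log H$, which needs $D \ge 1/\log\log H$ — this itself follows from \eqref{eq:BoundH} since $D \ge \log H / \max\{\ldots\}$ is not quite it; rather one uses that $D$ cannot be too small because $\log H/D$ is bounded below by an absolute constant, hence $D \le \log H$, giving $\log D \le \log\log H$, and a lower bound on $D$ comes from $c_2 < 1$ making $D > 0$ together with... actually the cleanest route is $-\log D \le \log\log H$ whenever $D \ge 1/\log\log H$, and if $D < 1/\log\log H$ then $\log H/D > \log H \log\log H$ which is fine too — so a short case split handles it, and (iii) checking $\log\log H \ge 1$, i.e.\ $H \ge e^e$, which is comfortably implied by \eqref{eq:BoundH}. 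None of this is conceptually hard, but the constant-chasing must be done with care to land exactly on the claimed coefficients.
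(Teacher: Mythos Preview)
Your overall strategy coincides with the paper's: both arguments bound the exponent appearing in Theorem~\ref{thm:lowerBound} (equivalently, the intermediate expression \eqref{eq:DMAx}) by trading every factor of $1/D$ for a factor of $\log\log H$ via the fourth alternative in \eqref{eq:BoundH}, and then absorbing the remaining constants using $A(m,k)=2.539k\varphi(m)+5.724k+0.054\ge 10.802\,k$. Two points in your write-up are genuinely wrong, though, and the first one is the heart of the matter.

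\textbf{The bound on $1/D$.} You assert $\log(\log H/D)\le \log\log H$, which is equivalent to $D\ge 1$; nothing in the hypotheses forces this, and in fact $D$ can be arbitrarily small (take $c_2(\overline\alpha;R')$ just below the right-hand side of \eqref{eq:cUpper}). Your later case split in (ii) concerns $-\log D$, not $1/D$, so it does not rescue the argument. The paper closes this gap with Lemma~\ref{lemma:xex}: the fourth alternative in \eqref{eq:BoundH} reads $\log H\ge D\,e^{2A(m,k)/D}+D$, and the lemma converts this directly into $D\ge A(m,k)/\log\log H$, i.e.\ $1/D\le (\log\log H)/A(m,k)$, without any assumption on the size of $D$. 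This single clean inequality is what makes the constants $0.0008$, $0.055$, $0.675$ come out; a crude $1/D\le \log\log H$ without the $1/A(m,k)$ factor would be off by roughly a factor of $10k$ in each term.

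\textbf{The set containment.} Your claim $R''\subseteq R'$ is backwards. Assumption \eqref{eq:BoundH} gives $\log H/D\ge \max\{1866,\,m^{\varphi(m)}+1,\,\max_j|\alpha_j|+1\}$, so the upper endpoint $2k\log H/D+2k$ of $R''$ strictly exceeds the upper endpoint $k\max\{1865,\,m^{\varphi(m)},\,\max_j|\alpha_j|+2\}$ of $R'$; hence $R'\subseteq R''$, not the reverse. The paper itself does not revisit the location of $p$ in its proof of the corollary (it simply re-enters the proof of Theorem~\ref{thm:lowerBound} at estimate \eqref{eq:DMAx} and bounds $\log|T(n+1,\mu)|$), so the discrepancy between ``$p\in R'$'' in the corollary and ``$p\in R''$'' in the theorem appears to be a slip in the paper's statement rather than something you were expected to prove.
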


In the last two results the exponent of $H$ goes to minus infinity when H goes to infinity. As a final main result, we prove that if we consider more than $k\varphi(m)/(k+1)$ residue classes, then
under certain conditions we obtain a different lower bound where the exponent of $H$ stays
bounded as $H$ diverges. The result is divided into two cases depending on $k$ since there is clear improvement in the result when $k \geq 2$ and some partial results in the case $k=1$ are already known (see \cite[Corollaries 1.5, 1.6 and 1.7]{EHMS2022}).

\begin{theorem}
\label{thm:lowerBoundMore}
Assume that $m, k$ and $\lambda_0,\ldots, \lambda_k$ satisfy the same hypothesis as in Theorem \ref{thm:dmGene} and let $\alpha_1,\ldots, \alpha_k$ be distinct non-zero integers. Assume also RH  and that the $m$th cyclotomic Dedekind zeta function satisfies the GRH, that $\varepsilon \in (0,1)$ is a real number and that the set $R$ consists of primes in arithmetic progressions in at least $(k+\varepsilon)\varphi(m)/(k+1)$ residue classes modulo $m$ and that we have $\max_{0 \leq i\leq k} \{\left|\lambda_i\right|\} \leq H$ where 
\begin{equation}
\label{eq:HassumptionEpsilon}
    \frac{\log H}{\varepsilon} \geq se^s
\end{equation}
and
\begin{equation}
\label{eq:LowerBoundsForS}
    s \geq \max \left\{1866,m^{\varphi(m)}+1, c_1+1, \left(\frac{15.195 k+16.195}{\varepsilon}\right)^{1.5}+1 \right\}.
\end{equation}

Then there exists a prime 
\begin{equation}
\label{eq:intervalEpsilon}
    p \in R\cap\left(\log\left(\frac{\log H}{\varepsilon\log\left(\frac{\log H}{\varepsilon}\right)}\right),k\left(\frac{12.994\log H}{\varepsilon\log\left(\frac{\log H}{\varepsilon}\right)}+2\right)\right)
\end{equation}
such that 
\begin{multline*}
    \left|\lambda_0+\lambda_1F_p(\alpha_1)+\lambda_2F_p(\alpha_2)+\ldots+\lambda_kF_p(\alpha_k)\right|_p \\
    >\begin{cases}
    H^{-\left(\frac{k}{\varepsilon}+1\right)-\frac{\left(197.444k^2/\varepsilon+210.438k/\varepsilon+1.726k^2-0.529k+12.995\right)\log\log(0.072\log H)}{\varepsilon \log\log H}} &\text{if } k \geq 1 \\
    H^{-\left(\frac{k}{\varepsilon}+1\right)-\frac{\left(97.932k^2/\varepsilon+104.377k/\varepsilon+0.856k^2-0.262k+6.446\right)\log\log(0.02\log H)}{\varepsilon \log\log H}} &\text{if } k \geq 2
    \end{cases}.
\end{multline*}
\end{theorem}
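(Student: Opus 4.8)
The argument is the by-now standard one for such $p$-adic lower bounds, with the arithmetic input supplied by the explicit GRH-conditional estimates for $\pi(x;m,a)$ and $\theta(x;m,a)$ of Sections \ref{sec:piTheta}--\ref{sec:EstDepend}. First I would bring in the Hermite--Padé approximants to $F$ at $\alpha_1,\dots,\alpha_k$ (the same construction underlying Theorem \ref{thm:limSupCond}): for every integer $n\ge1$ there are a nonzero integer $Q_n$ and integers $P_{1,n},\dots,P_{k,n}$ with $\max\{|Q_n|,\max_i|P_{i,n}|\}\le c_1^{\,n}(kn+k+1)!$ (up to the explicit numerical factors recorded earlier), and, for each prime $p$, remainders $R_{i,n,p}:=Q_nF_p(\alpha_i)-P_{i,n}$ with $|R_{i,n,p}|_p\le|(kn)!\,n!|_p$ (here $|\alpha_i|_p\le1$ is used to drop the factor $(\max_i|\alpha_i|_p)^{(k+1)n}\le1$ appearing in $c_2$, which is why only $c_1$ occurs in this theorem). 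Setting $L_n:=Q_n\lambda_0+\sum_{i=1}^k\lambda_iP_{i,n}\in\mathbb Z$ one has $Q_n\Lambda_p=L_n+\sum_i\lambda_iR_{i,n,p}$, whence for every prime $p$
\[
  |L_n|_p\ \le\ \max\bigl(|\Lambda_p|_p,\ |(kn)!\,n!|_p\bigr).
\]
I would also use the non-degeneracy of the construction: of any $k+1$ consecutive indices at least one $L_n$ is nonzero, because the corresponding $(k+1)\times(k+1)$ coefficient determinant does not vanish and $(\lambda_0,\dots,\lambda_k)\ne(0,\dots,0)$.

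I would then fix $n^\ast$ to be, roughly, the least integer with $\varepsilon\,n^\ast\log n^\ast$ exceeding $\log H$ by the margin needed to absorb the error terms below --- this is why $n^\ast\asymp\log H/\bigl(\varepsilon\log(\log H/\varepsilon)\bigr)$ and why the constant $12.994$ and the thresholds \eqref{eq:HassumptionEpsilon}--\eqref{eq:LowerBoundsForS} are what they are --- and replace $n^\ast$ by the index in $\{n^\ast,\dots,n^\ast+k\}$ with $L$ nonzero (this shift produces the extra multiplicative $k$ and additive $2$ in the upper end of \eqref{eq:intervalEpsilon}, while the mild cut-off $p>\log(\cdots)$ delimits the range where the non-Archimedean estimates are exploited). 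Let $B$ be the bound displayed in the theorem (in the relevant one of the two cases) and put $I:=R\cap J$, where $J$ is the interval in \eqref{eq:intervalEpsilon}; assume for contradiction that $|\Lambda_p|_p\le B$ for all $p\in I$. Two elementary facts enter: first, $B=H^{-(k/\varepsilon+1+o(1))}$ is far smaller than $|(kn^\ast)!\,n^\ast!|_p\ge 2^{-(k+1)n^\ast}$ for every $p$, so the displayed bound of the first step gives $|L_{n^\ast}|_p\le|(kn^\ast)!\,n^\ast!|_p$ on $I$; second, $|L_{n^\ast}|\le(k+1)H\,c_1^{\,n^\ast}(kn^\ast+k+1)!$. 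Since $|L_{n^\ast}|_p\le1$ off $I$, the product formula for the nonzero integer $L_{n^\ast}$ yields
\[
  1\ =\ |L_{n^\ast}|\prod_{p}|L_{n^\ast}|_p\ \le\ |L_{n^\ast}|\prod_{p\in I}|(kn^\ast)!\,n^\ast!|_p .
\]

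The heart of the matter is to contradict this, i.e.\ to show $|L_{n^\ast}|\prod_{p\in I}|(kn^\ast)!\,n^\ast!|_p<1$; after taking logarithms and using the Archimedean bound on $|L_{n^\ast}|$ this reads
\[
  \log\bigl((k+1)H\bigr)+n^\ast\log c_1+\log\bigl((kn^\ast+k+1)!\bigr)\ <\ -\!\sum_{p\in I}\log|(kn^\ast)!\,n^\ast!|_p .
\]
The right-hand side is where GRH is used: from the explicit bounds on $\theta(x;m,a)$, and on $\sum_{p\le x,\,p\equiv a\,(m)}(\log p)/p$ deduced from them by partial summation, one gets for a union $R$ of at least $(k+\varepsilon)\varphi(m)/(k+1)$ residue classes a bound $-\sum_{p\in I,\,p\le N}\log|N!|_p\ge\tfrac{k+\varepsilon}{k+1}\log(N!)-(\text{explicit error})$ for $N=kn^\ast$ and $N=n^\ast$. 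Feeding these in together with Stirling's formula, the $\log((kn^\ast)!)$- and $\log(n^\ast!)$-contributions combine --- with $\rho=\tfrac{k+\varepsilon}{k+1}$ one has $(1-\rho)k-\rho=-\varepsilon$ --- into $-\varepsilon\,n^\ast\log n^\ast+(\text{lower order in }n^\ast)$, so the inequality reduces to $\varepsilon\,n^\ast\log n^\ast>\log H+(\text{lower order})$, which holds by the choice of $n^\ast$ and \eqref{eq:HassumptionEpsilon}--\eqref{eq:LowerBoundsForS}. This contradiction proves $|\Lambda_p|_p>B$ for some $p\in I$. Finally, unwinding the explicit value of $n^\ast$ in $-\log B\le\log\bigl((k+1)H\bigr)+n^\ast\log c_1+\log\bigl((kn^\ast+k+1)!\bigr)$ and using $kn^\ast\log(kn^\ast)\approx\tfrac{k}{\varepsilon}\log H$ reproduces the exponent $-(\tfrac k\varepsilon+1)$ and the asserted $\log\log$-correction; the sharper constants available for $k\ge2$ come from the stronger degree/non-vanishing bounds one has with at least two functions, letting $n^\ast$ be taken about a factor $2$ smaller.

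The main obstacle is the density-weighted estimate $-\sum_{p\in I}\log|N!|_p\ge\rho\log(N!)-(\text{error})$ with an error that is simultaneously fully explicit and small enough already for $N=kn^\ast$ of the modest size $\asymp\log H/(\varepsilon\log\log H)$: this requires careful use of the GRH bounds for $\theta(x;m,a)$ plus partial summation, and scrupulous bookkeeping so that every constant matches the thresholds \eqref{eq:HassumptionEpsilon}--\eqref{eq:LowerBoundsForS} and yields precisely $\tfrac k\varepsilon+1$ and the stated $\log\log$-terms. Verifying $B\le|(kn^\ast)!\,n^\ast!|_p$ uniformly over $I$, and correctly pushing the $\{n^\ast,\dots,n^\ast+k\}$ shift into the endpoints of \eqref{eq:intervalEpsilon}, are the remaining, more routine, checks.
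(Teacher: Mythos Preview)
Your outline is essentially the paper's approach: Pad\'e approximation, the nonvanishing $(k+1)$-tuple, the product formula $1=|L_{n}|\prod_p|L_n|_p$, and the GRH-explicit estimate on $\sum_{p\in R,\,p\le N}\log|N!|_p$ via Lemma~\ref{lemma:Log}, with $n$ chosen so that $\varepsilon n\log n$ just exceeds $\log H$ plus lower-order terms. Two remarks. First, the paper organises the contradiction slightly differently: rather than assuming $|\Lambda_p|_p\le B$ and then checking $B\le|(kn)!n!|_p$ on $I$, it assumes $|B_{n+1,\mu,0}(1)\Lambda_p|_p\le|\sum_j\lambda_jS_{n+1,\mu,j}(1)|_p$ for all $p$ in the interval, derives the product-formula contradiction directly, and then for the good prime obtains $|\Lambda_p|_p\ge|T(n+1,\mu)|^{-1}$ and bounds $|T|$ explicitly. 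This sidesteps your auxiliary verification entirely and is the source of the displayed exponent. Also, the paper varies the Pad\'e parameter $\mu\in\{0,\dots,k\}$ (Lemma~6.2 of \cite{S2020}) rather than shifting $n$; either works, and the upper endpoint $k(n+2)$ in \eqref{eq:intervalEpsilon} comes from $p\le k(n+1)+\mu\le k(n+2)$.

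Second, your explanation for the sharper constants when $k\ge2$ is incorrect. Nothing about ``stronger degree/non-vanishing bounds'' enters; the improvement is purely numerical and comes from Lemma~\ref{lemma:nUpperEpsilon}. There one inverts $z\mapsto z\log z$ via Lemma~\ref{lemma:Inversez} and bounds the factor $\bigl(A(k)/\varepsilon\bigr)^{1/2}\big/\bigl((A(k)/\varepsilon)^{1/2}-1.5\log(A(k)/\varepsilon)\bigr)$, which is decreasing in $A(k)/\varepsilon$; since $A(k)=15.195k+16.195$, the lower bound $A(k)/\varepsilon\ge A(2)$ available for $k\ge2$ gives the smaller constant $6.445$ (versus $12.994$ from $A(1)$), and this propagates to the exponent. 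So the two cases differ only in which numerical value of $A(k)$ you may plug in, not in the Pad\'e construction.
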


\begin{remark}
Due to assumptions \eqref{eq:HassumptionEpsilon} and \eqref{eq:LowerBoundsForS}, we have
\begin{equation*}
    \log H >\varepsilon\left(\frac{31^{1.5}}{\varepsilon^{1.5}}+1\right)e^{\frac{31^{1.5}}{\varepsilon^{1.5}}+1}\geq 31^{1.5}e^{31^{1.5}+1}+e^{31^{1.5}+1}>10^{70}.
\end{equation*}
Hence, terms $\log\log(0.072\log H)$ and $\log\log(0.02\log H)$ in Theorem \ref{thm:lowerBoundMore} are positive.
\end{remark}

\section{Outline of the proofs}
\label{sec:outline}
In this section, we describe the main idea of the proof. As in \cite{EHMS2019,M2016,S2020} we start by estimating the functions $F(\alpha_j)$ using Pad\'e approximations. Thus we write
\begin{equation*}
   B_{n,\mu,0}(t)F(\alpha_jt)-B_{n,\mu,j}(t)=S_{n,\mu,j}(t), 
\end{equation*}
where $n \in \mathbb{Z_+}$, $\mu \geq 0$, $\mu \in \mathbb{Z}$, for each $j=1,2\ldots, k$ for certain explicit functions $B_{n, \mu, 0}$, $B_{n,\mu, j}$ and $S_{n, \mu, j}$. Then we have
\begin{equation}
\label{eq:defT}
    B_{n,\mu,0}(1)\Lambda_p=\sum_{j=0}^k \lambda_jB_{n,\mu,j}(1)+\sum_{j=1}^k \lambda_jS_{n,\mu,j}(1) \coloneqq T(n, \mu)+\sum_{j=1}^k \lambda_jS_{n,\mu,j}(1).
\end{equation}

Now, let $R$ be a set of certain primes in arithmetic progressions. It is clearly non-empty in Theorems \ref{thm:limSupCond} and \ref{thm:dmGene} and it is proved to be non-empty in Theorems \ref{thm:lowerBound} and \ref{thm:lowerBoundMore} (see Section \ref{sec:Existence}). If we have $\Lambda_p=0$ for all $p \in R$ (i.e. contradiction to Theorem \ref{thm:limSupCond}), from which it follows that
\begin{equation*}
    \left|T(n+1,\mu)\right|_p=\left|\sum_{j=1}^k \lambda_jS_{n+1,\mu,j}(1)\right|_p,
\end{equation*}
or if we have
\begin{equation}
\label{eq:LambdaContra}
    \left|B_{n+1,\mu,0}(1)\Lambda_p\right|_p \leq \left|\sum_{j=1}^k \lambda_jS_{n+1,\mu,j}(1)\right|_p 
\end{equation}
for all $p \in R$, then estimate
\begin{align}
    1&=\left|T(n+1, \mu)\right|\prod_{p \in \mathbb{P}} \left|T(n+1, \mu)\right|_p \leq \left|T(n+1, \mu)\right|\prod_{p \in R} \left|T(n+1, \mu)\right|_p \nonumber\\ 
    &\quad\leq (k+1)\max_{0 \leq i\leq k} \{\left|\lambda_i\right|\} \max_{0 \leq i \leq k} \{\left|B_{n+1,\mu,i}(1)\right|\}\cdot \max_{1 \leq i \leq k} \left\{ \prod_{p \in R}\left|S_{n+1,\mu,i}(1)\right|_p\right\} \label{eq:Product}
\end{align}
holds. In order to prove Theorems \ref{thm:limSupCond}, \ref{thm:lowerBound} and \ref{thm:lowerBoundMore} and Corollary \ref{corollary:lowerBound}, we show that the right-hand side of inequality \eqref{eq:Product} is smaller than $1$ for some integer $n$ when $\mu$ is selected in such a way that 
\begin{equation}
\label{eq:defMu}
    T(n+1,\mu) \neq 0, \quad \mu \in \{0,1,\ldots, k\}
\end{equation}
(see \cite[Lemma 6.2]{S2020}). This is a contradiction. Hence neither $\Lambda_p =0$ nor inequality \eqref{eq:LambdaContra} cannot hold for all $p \in R$. The first one proves Theorem \ref{thm:limSupCond} and Theorem \ref{thm:dmGene} follows from that.

In order to prove Theorems \ref{thm:lowerBound} and \ref{thm:lowerBoundMore} and Corollary \ref{corollary:lowerBound} we have to do a little bit more work. Since there must exists $p \in R$ for which inequality \eqref{eq:LambdaContra} cannot hold, we must have
\begin{equation}
\label{eq:TnEnough}
    1 \leq \left|T(n+1, \mu)\right|\, \left|T(n+1, \mu)\right|_p=\left|T(n+1, \mu)\right|\, \left|B_{n+1,\mu,0}(1)\Lambda_p\right|_p \leq \left|T(n+1, \mu)\right|\,\left|\Lambda_p\right|_p. 
\end{equation}
Hence, in order to find lower bounds for the terms $\left|\Lambda_p\right|_p$, it is sufficient to find and upper bound for the term $\left|T(n, \mu)\right|$.

The estimates for the Pad\'e approximations are derived in Section \ref{sec:Pade}. In Sections \ref{sec:piTheta} and \ref{sec:EstDepend} we prove some results related to primes in arithmetic progressions which are used later in our main proofs. In Section \ref{sec:Existence}, we show that there exists primes in sets \eqref{eq:interval} and \eqref{eq:intervalEpsilon} in Theorems \ref{thm:lowerBound} and \ref{thm:lowerBoundMore}. In Sections \ref{sec:proof1} and \ref{sec:proof2}, we prove Theorems \ref{thm:limSupCond} and \ref{thm:dmGene} respectively. In Sections \ref{sec:Contra} and \ref{sec:Contra2} we show the contradictions that the right-hand side of inequality \eqref{eq:Product} must be smaller than one, first in a setup of Theorem \ref{thm:lowerBound} and then with respect to Theorem \ref{thm:lowerBoundMore}. Finally, in Sections \ref{sec:proof3} and \ref{sec:proof4}, we prove Theorems \ref{thm:lowerBound} and \ref{thm:lowerBoundMore} and Corollary \ref{corollary:lowerBound}.

\section{Preliminary estimates}
\subsection{Estimates for Pad\'e approximations}
\label{sec:Pade}

In this section, we consider Pad\'e approximations for the Euler factorial series and prove estimates for them. 

The following function is used in the definitions:
\begin{equation}
\label{eq:defSigma}
    \sigma_i \coloneqq \sigma_i(n, \overline{\alpha})=(-1)^i\sum_{\substack{i_1+\ldots+i_k=i \\ 0 \leq i_j \leq n}} \binom{n}{i_i}\binom{n}{i_2}\cdots \binom{n}{i_k}\alpha_1^{n-i_i}\alpha_2^{n-i_2}\cdots \alpha_k^{n-i_k}.
\end{equation}

Now we describe the Pad\'e approximations we are going to use:
\begin{theorem}
\cite[Theorem 4.2 and Section 4.2]{S2020}
\label{thm:Pade}
Let $k, \alpha_i$ be defined as Theorem \ref{thm:lowerBound}, and assume that $\mu \geq 0$, $\mu \in \mathbb{Z}$ and $n \in \mathbb{Z}_+$. Then we have
\begin{equation*}
    B_{n,\mu,0}(t)F(\alpha_jt)-B_{n,\mu,j}(t)=S_{n,\mu,j}(t), \quad j=1,2,\ldots, k,
\end{equation*}
where
\begin{align*}
        & B_{n,\mu,0}(t):=\sum_{i=0}^{kn} \sigma_i\cdot\frac{(kn+\mu)!}{(i+\mu)!}t^{kn-i} \\
        & B_{n,\mu,j}(t):=(kn+\mu)!\sum_{N=0}^{kn+\mu-1}t^N\sum_{h=0}^{\min\{kn,N\}}\sigma_{kn-h}\cdot\frac{(N-h)!}{(kn+\mu-h)!}\cdot\alpha_j^{N-h} \\
        & S_{n,\mu,j}(t):=(kn+\mu)!n!t^{(k+1)n+\mu}\sum_{h=0}^\infty h!\binom{n+h}{h}\alpha_j^{n+h+\mu}t^h\sum_{i=0}^{kn}\sigma_i\binom{i+\mu+n+h}{i+\mu}\alpha_j^i.
\end{align*}
\end{theorem}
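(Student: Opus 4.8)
\textbf{Proof proposal for Theorem \ref{thm:Pade}.}

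The plan is to verify the Padé-type identity $B_{n,\mu,0}(t)F(\alpha_j t)-B_{n,\mu,j}(t)=S_{n,\mu,j}(t)$ by a direct formal power-series computation, treating $F(t)=\sum_{m\ge 0} m!\,t^m$ as a formal series (which is all that is needed, since the statement is an algebraic identity of formal series in $t$, with the $p$-adic convergence only relevant later). Since this is a known result quoted from \cite[Theorem 4.2 and Section 4.2]{S2020}, I would present the verification rather than a discovery argument. The key structural fact I would first isolate is the generating-function meaning of the $\sigma_i$: from \eqref{eq:defSigma} and the binomial theorem one has
\begin{equation*}
    \sum_{i=0}^{kn}\sigma_i\, x^i=\prod_{j=1}^k\bigl(\alpha_j-x\bigr)^n\big/\text{(a normalisation)} \;=\;(-1)^{kn}\prod_{j=1}^k(x-\alpha_j)^n,
\end{equation*}
or more precisely $\sum_i \sigma_i x^i=\prod_{j=1}^k(\alpha_j - x)^n$ up to sign bookkeeping; establishing this clean product form is the conceptual heart and makes every subsequent manipulation transparent.

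First I would expand $B_{n,\mu,0}(t)F(\alpha_j t)$ as a Cauchy product: $B_{n,\mu,0}(t)=\sum_{i=0}^{kn}\sigma_i\frac{(kn+\mu)!}{(i+\mu)!}t^{kn-i}$ times $\sum_{\ell\ge 0}\ell!\,\alpha_j^\ell t^\ell$. Collecting the coefficient of $t^N$ and substituting $\ell=N-kn+i$ gives, for each $N\ge 0$,
\begin{equation*}
    [t^N]\,B_{n,\mu,0}(t)F(\alpha_j t)=(kn+\mu)!\sum_{i=\max\{0,kn-N\}}^{kn}\frac{\sigma_i}{(i+\mu)!}(N-kn+i)!\,\alpha_j^{N-kn+i}.
\end{equation*}
Then I would reindex via $h=kn-i$ (so $i=kn-h$, and the constraint becomes $0\le h\le \min\{kn,N\}$), turning this into $(kn+\mu)!\sum_{h=0}^{\min\{kn,N\}}\sigma_{kn-h}\frac{(N-h)!}{(kn+\mu-h)!}\alpha_j^{N-h}$. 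This is exactly $[t^N]B_{n,\mu,j}(t)$ for $0\le N\le kn+\mu-1$, so those terms cancel, and for $N\ge kn+\mu$ the same expression survives and must be shown to equal $[t^N]S_{n,\mu,j}(t)$. I would then set $N=(k+1)n+\mu+h'$ with $h'\ge 0$ in the surviving tail, reindex the inner sum back to $i=kn-h$, and match term-by-term against the stated double sum for $S_{n,\mu,j}$: after writing $\binom{n+h'}{h'}=\frac{(n+h')!}{n!\,h'!}$ and $\binom{i+\mu+n+h'}{i+\mu}=\frac{(i+\mu+n+h')!}{(i+\mu)!(n+h')!}$, the factorials should telescope so that the coefficient of $t^N$ in $S_{n,\mu,j}$ coincides with the coefficient extracted above. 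The bookkeeping of factorials and binomials in this last matching is the main obstacle — it is purely mechanical but error-prone, and the cleanest route is to reduce everything to ratios of the form $(N-h)!/(kn+\mu-h)!$ on both sides and check equality of the remaining rational factors.

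Finally I would remark that, alternatively, one can cite \cite{S2020} verbatim for the identity and only re-derive the pieces needed elsewhere in the paper; but since the proof is short I would include the coefficient comparison above for completeness, noting that the only place analysis enters is in guaranteeing $S_{n,\mu,j}(t)$ is a genuine $p$-adic function for integer $t$ (it is, because the factorials in its coefficients grow $p$-adically to zero), which plays no role in the formal identity itself. No convergence hypotheses on $F$ are needed for the statement as worded, so the verification is complete once the coefficient of each $t^N$ is matched.
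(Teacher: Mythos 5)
The paper itself offers no proof of Theorem \ref{thm:Pade}; it is quoted verbatim from \cite[Theorem 4.2 and Section 4.2]{S2020}, so there is no internal argument to compare against. Judged on its own terms, your coefficient-comparison plan is the right kind of verification, and the Cauchy-product computation, the reindexing $h=kn-i$, and the cancellation against $B_{n,\mu,j}(t)$ for $0\le N\le kn+\mu-1$ are all correct. But there is a genuine gap: after the cancellation, the surviving coefficients of $B_{n,\mu,0}(t)F(\alpha_jt)-B_{n,\mu,j}(t)$ live at \emph{all} $N\ge kn+\mu$, whereas $S_{n,\mu,j}(t)$ begins only at $t^{(k+1)n+\mu}$. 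By jumping straight to the parametrization $N=(k+1)n+\mu+h'$ you silently skip the range $kn+\mu\le N\le (k+1)n+\mu-1$, where you must prove that the surviving coefficient
\begin{equation*}
(kn+\mu)!\,\alpha_j^{\mu+s}\sum_{i=0}^{kn}\sigma_i\,\frac{(i+\mu+s)!}{(i+\mu)!}\,\alpha_j^{i},\qquad N=kn+\mu+s,\ 0\le s\le n-1,
\end{equation*}
vanishes. This is not mechanical bookkeeping: it is exactly where the product form $\sum_{i=0}^{kn}\sigma_i x^i=\prod_{l=1}^k(\alpha_l-x)^n$ (which you rightly call the conceptual heart, but never actually use) enters. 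One clean way: the inner sum is $\alpha_j^{-\mu}$ times $\frac{d^s}{dx^s}\bigl[x^{\mu+s}\prod_{l=1}^k(\alpha_l-x)^n\bigr]$ evaluated at $x=\alpha_j$, and since the product has a zero of order $n$ at $x=\alpha_j$, every derivative of order $s<n$ vanishes there (and $\alpha_j\ne 0$ lets you divide out $\alpha_j^{\mu}$). Without this step the identity is not established, because the matching you do carry out, at $N\ge (k+1)n+\mu$, reduces (via $n!\,h!\binom{n+h}{h}=(n+h)!$ and the expansion of $\binom{i+\mu+n+h}{i+\mu}$) to factorial cancellations that never invoke the structure of $\sigma_i$ at all; the Pad\'e (order-of-vanishing) property is carried entirely by the intermediate range you omitted.
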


Next we estimate the approximations. In order to do it, we need the following lemma:

\begin{lemma}
\label{lemma:sigma}
\cite[Lemma 4.1, estimate (15)]{S2020}
Let $k$ be defined as Theorem \ref{thm:lowerBound}, $n\in \mathbb{Z}_+$ and $\sigma_i(n, \overline{\alpha})$ be defined as in \eqref{eq:defSigma}. Then for all $t \geq 0$ we have
\begin{equation*}
    \sum_{i=0}^{kn}\left|\sigma_i(n, \overline{\alpha})\right|t^i \leq \prod_{i=1}^k \left(\left|\alpha_i\right|+t\right)^n.
\end{equation*}
\end{lemma}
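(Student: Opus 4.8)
\textbf{Proof plan for Lemma \ref{lemma:sigma}.}

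The plan is to prove the inequality by recognizing the left-hand side as a specialization of a product of $k$ polynomials, where the signs in $\sigma_i$ conveniently disappear once we pass to absolute values. Concretely, start from the defining formula \eqref{eq:defSigma}:
\begin{equation*}
    \sigma_i(n,\overline{\alpha})=(-1)^i\sum_{\substack{i_1+\ldots+i_k=i \\ 0 \leq i_j \leq n}} \binom{n}{i_1}\binom{n}{i_2}\cdots \binom{n}{i_k}\alpha_1^{n-i_1}\alpha_2^{n-i_2}\cdots \alpha_k^{n-i_k}.
\end{equation*}
Taking absolute values and using the triangle inequality together with $\left|(-1)^i\right|=1$, we get
\begin{equation*}
    \left|\sigma_i(n,\overline{\alpha})\right| \leq \sum_{\substack{i_1+\ldots+i_k=i \\ 0 \leq i_j \leq n}} \binom{n}{i_1}\cdots \binom{n}{i_k}\left|\alpha_1\right|^{n-i_1}\cdots \left|\alpha_k\right|^{n-i_k}.
\end{equation*}
Multiplying by $t^i=t^{i_1}t^{i_2}\cdots t^{i_k}$ and summing over $i$ from $0$ to $kn$, the constraint $i_1+\ldots+i_k=i$ is automatically swept out, so the double sum collapses into a product of independent single sums:
\begin{equation*}
    \sum_{i=0}^{kn}\left|\sigma_i(n,\overline{\alpha})\right|t^i \leq \sum_{i_1=0}^n\cdots\sum_{i_k=0}^n \prod_{j=1}^k\binom{n}{i_j}\left|\alpha_j\right|^{n-i_j}t^{i_j} = \prod_{j=1}^k\left(\sum_{i_j=0}^n\binom{n}{i_j}\left|\alpha_j\right|^{n-i_j}t^{i_j}\right).
\end{equation*}
Finally, each inner sum is exactly the binomial expansion of $\left(\left|\alpha_j\right|+t\right)^n$, which gives the claimed bound $\prod_{j=1}^k\left(\left|\alpha_j\right|+t\right)^n$.

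I do not anticipate a genuine obstacle here; the only point requiring a little care is the bookkeeping when interchanging the order of summation and factoring the multi-index sum, i.e. verifying that as $(i_1,\ldots,i_k)$ ranges over $\{0,\ldots,n\}^k$ the value $i=i_1+\ldots+i_k$ ranges over exactly $\{0,\ldots,kn\}$ with each tuple counted once, so that no terms are lost or double-counted when passing from the constrained sum over $i$ to the unconstrained product. Since $t\geq 0$ and $\left|\alpha_j\right|\geq 0$, all terms are nonnegative and there are no convergence or sign-cancellation subtleties. This is essentially the standard argument that appears in \cite{S2020}, reproduced here for completeness.
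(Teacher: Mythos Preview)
Your proof is correct and is exactly the natural argument. Note that the paper does not actually prove this lemma; it simply quotes it from \cite[Lemma 4.1, estimate (15)]{S2020}, so there is no in-paper proof to compare against, but your derivation is the standard one behind that cited estimate.
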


Now we are ready to estimate the Pad\'e approximations:
\begin{lemma}
\label{lemma:BSEstimates}
Let $k, \alpha_i$ be defined as Theorem \ref{thm:lowerBound} and let terms $\mu, n$, $B_{n,\mu,0}(1), B_{n,\mu,j}(1)$ and $S_{n,\mu,j}(1)$ be as in Theorem \ref{thm:Pade}. Then we have
\begin{align*}
        & \left|B_{n,\mu,0}(1)\right|\leq (kn)!\binom{kn+\mu}{\mu}\prod_{i=1}^{k}\left(\left|\alpha_i\right|+1\right)^n,  \\
        & \left|B_{n,\mu,j}(1)\right| \leq (kn+\mu)!\cdot(kn+\mu)\cdot |\alpha_j|^{\mu-1} \prod_{i=1}^k \left(\left|\alpha_i\right|+\left|\alpha_j\right|\right)^n \\
        &\text{and} \quad \left|S_{n,\mu,j}(1)\right|_p \leq \left|(kn+\mu)!n!\right|_p\cdot \left(\max\{\left|\alpha_j\right|_p\}\right)^{(k+1)n}.
\end{align*}
\end{lemma}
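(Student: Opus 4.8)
The plan is to read each of the three estimates directly off the closed forms in Theorem~\ref{thm:Pade} by the triangle inequality (ordinary for the two $B$-bounds, ultrametric for the $S$-bound), so that in every case the work reduces to Lemma~\ref{lemma:sigma} together with the trivial facts that a nonzero integer has absolute value $\ge1$ and $p$-adic absolute value $\le1$. For $B_{n,\mu,0}(1)=\sum_{i=0}^{kn}\sigma_i\,(kn+\mu)!/(i+\mu)!$ the triangle inequality gives $|B_{n,\mu,0}(1)|\le\sum_{i=0}^{kn}|\sigma_i|\,(kn+\mu)!/(i+\mu)!$; since $0\le i\le kn$ forces $(i+\mu)!\ge\mu!$, each quotient is at most $(kn+\mu)!/\mu!=(kn)!\binom{kn+\mu}{\mu}$, and pulling this constant out and applying Lemma~\ref{lemma:sigma} with $t=1$ to $\sum_{i=0}^{kn}|\sigma_i|$ yields the first estimate.

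For $B_{n,\mu,j}(1)$ one starts from
\[
|B_{n,\mu,j}(1)|\le(kn+\mu)!\sum_{N=0}^{kn+\mu-1}\ \sum_{h=0}^{\min\{kn,N\}}|\sigma_{kn-h}|\,\frac{(N-h)!}{(kn+\mu-h)!}\,|\alpha_j|^{N-h}.
\]
Since $N-h\le kn+\mu-h$, the factorial ratio is $\le1$ and may be dropped. The substitution $i=kn-h$ turns the inner sum into $\sum_{i=\max\{0,kn-N\}}^{kn}|\sigma_i|\,|\alpha_j|^{\,i+N-kn}$, and because $|\alpha_j|\ge1$ this is at most $|\alpha_j|^{\max\{0,N-kn\}}\sum_{i=0}^{kn}|\sigma_i||\alpha_j|^i\le|\alpha_j|^{\max\{0,N-kn\}}\prod_{i=1}^k(|\alpha_i|+|\alpha_j|)^n$ by Lemma~\ref{lemma:sigma} with $t=|\alpha_j|$ (for $N<kn$ one uses that the exponent $i+N-kn$ is nonnegative and $\le i$; for $N\ge kn$ one factors out $|\alpha_j|^{N-kn}$). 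Summing over $N=0,\dots,kn+\mu-1$, the $kn$ indices with $N<kn$ contribute at most $1$ each and the remaining $\mu$ indices contribute $\sum_{l=0}^{\mu-1}|\alpha_j|^l\le\mu|\alpha_j|^{\mu-1}$; since $|\alpha_j|^{\mu-1}\ge1$ when $\mu\ge1$ (and the second group of indices is empty when $\mu=0$) the total is $\le(kn+\mu)|\alpha_j|^{\mu-1}$, giving the second estimate.

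Finally, $S_{n,\mu,j}(1)$ converges $p$-adically because its general term is divisible by $h!$ and $|h!|_p\to0$. All factorials and binomial coefficients appearing in the formula are rational integers, hence have $p$-adic absolute value $\le1$, so only the powers of the $\alpha_i$ contribute. Expanding $\sigma_i$ via \eqref{eq:defSigma}, a typical term of the multiple sum carries a factor $\prod_{l=1}^k|\alpha_l|_p^{e_l}$ with $\sum_l e_l=(kn-i)+(n+h+\mu)+i=(k+1)n+h+\mu$ (the $kn-i$ coming from $\sigma_i$, the $n+h+\mu$ and the $i$ from the two explicit powers of $\alpha_j$); since each $|\alpha_l|_p\le1$ this factor is at most $\bigl(\max_{1\le i\le k}|\alpha_i|_p\bigr)^{(k+1)n+h+\mu}\le\bigl(\max_{1\le i\le k}|\alpha_i|_p\bigr)^{(k+1)n}$. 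The ultrametric inequality then bounds the whole multiple sum by this quantity, and factoring out $|(kn+\mu)!\,n!|_p$ gives the third estimate.

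The only step that needs genuine care is the middle one: the reindexing $i=kn-h$, the ranges of summation it produces, and the geometric-type sum over $N$ must all be tracked correctly, with the small values of $\mu$ (and of $kn$) checked directly. The estimates for $B_{n,\mu,0}(1)$ and $S_{n,\mu,j}(1)$ are immediate once the (ordinary, resp.\ ultrametric) triangle inequality is combined with Lemma~\ref{lemma:sigma} and the size bounds for integers.
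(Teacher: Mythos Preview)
Your first and third estimates are correct and essentially match the paper's proof: the bound on $|B_{n,\mu,0}(1)|$ is exactly the calculation the paper cites from \cite{S2020}, and your ultrametric argument for $|S_{n,\mu,j}(1)|_p$ is the same as the paper's, just phrased in terms of total exponents rather than first bounding $|\sigma_i|_p$.

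The second estimate has a genuine gap at $\mu=0$. After your bound for the inner sum you obtain, summing over $N$,
\[
\sum_{N=0}^{kn+\mu-1}|\alpha_j|^{\max\{0,N-kn\}}\;=\;kn+\sum_{l=0}^{\mu-1}|\alpha_j|^{\,l}\;\le\;kn+\mu|\alpha_j|^{\mu-1}.
\]
For $\mu\ge 1$ this is indeed $\le(kn+\mu)|\alpha_j|^{\mu-1}$ because $|\alpha_j|^{\mu-1}\ge 1$, but for $\mu=0$ it gives $kn$, whereas the stated bound is $(kn+\mu)|\alpha_j|^{\mu-1}=kn|\alpha_j|^{-1}$, which is strictly smaller once $|\alpha_j|\ge 2$. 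Your parenthetical remark that ``the second group of indices is empty when $\mu=0$'' does not save this; the first group already overshoots.

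The fix is the one the paper uses: do not replace $|\alpha_j|^{\,i+N-kn}$ by $|\alpha_j|^{\,i}$ for $N<kn$. Instead write $|\alpha_j|^{\,i+N-kn}=|\alpha_j|^{\,N-kn}\cdot|\alpha_j|^{\,i}$ and keep the factor $|\alpha_j|^{\,N-kn}$ (which may be $<1$) outside the inner sum. Then Lemma~\ref{lemma:sigma} with $t=|\alpha_j|$ bounds the inner sum uniformly by $\prod_i(|\alpha_i|+|\alpha_j|)^n$, and the outer sum becomes $\sum_{N=0}^{kn+\mu-1}|\alpha_j|^{\,N-kn}\le(kn+\mu)|\alpha_j|^{\mu-1}$, valid for every $\mu\ge 0$ since each term is $\le|\alpha_j|^{\mu-1}$.
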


\begin{proof}
The estimate for the term $B_{n,\mu,0}(1)$ follows from \cite[Section 7]{S2020}. So let us move on to estimate the term $B_{n,\mu,j}(1)$. Adding non-negative terms if needed and using Lemma \ref{lemma:sigma} we can deduce
\begin{align*}
    \left|B_{n,\mu,j}(1)\right|&=\left|(kn+\mu)!\sum_{N=0}^{kn+\mu-1}\sum_{h=0}^{\min\{kn,N\}}\sigma_{kn-h}\cdot\frac{(N-h)!}{(kn+\mu-h)!}\cdot\alpha_j^{N-h}\right| \\
    &\leq (kn+\mu)!\sum_{N=0}^{kn+\mu-1}\left|\alpha_j\right|^{N-kn}\sum_{h=0}^{\min\{kn,N\}} \left|\sigma_{kn-h}\right|\cdot \left|\alpha_j\right|^{kn-h} \\
    &\leq (kn+\mu)!\cdot(kn+\mu)\cdot |\alpha_j|^{\mu-1} \prod_{i=1}^k \left(\left|\alpha_i\right|+\left|\alpha_j\right|\right)^n,
\end{align*}
which proves the second case.

Now we have only the term $S_{n,\mu,j}(1)$ left. Keeping in mind that the numbers $\alpha_1,\alpha_2,\ldots, \alpha_k$ are integers and 
using the estimate
\begin{equation*}
   \left|\sigma_i\right|_p = \left|\sum_{i_1+\ldots+i_k=i} \binom{n}{i_i}\binom{n}{i_2}\cdots \binom{n}{i_k}\alpha_1^{n-i_i}\alpha_2^{n-i_2}\cdots \alpha_k^{n-i_k}\right|_p \leq \left(\max\{\left|\alpha_j\right|_p\}\right)^{kn-i},
\end{equation*}
we get
\begin{align*}
    \left|S_{n,\mu,j}(1)\right|_p&=\left|(kn+\mu)!n!\sum_{h=0}^\infty h!\binom{n+h}{h}\alpha_j^{n+h+\mu}\sum_{i=0}^{kn}\sigma_i\binom{i+\mu+n+h}{i+\mu}\alpha_j^i\right|_p \\
    &\leq \left|(kn+\mu)!n!\right|_p\cdot \max_{\substack{0\leq h <\infty, \\ 0\leq i \leq kn}}\left| h!\binom{n+h}{h}\binom{i+\mu+n+h}{i+\mu}\right|_p \left(\max\{\left|\alpha_j\right|_p\}\right)^{(k+1)n+\mu} \\
    & \leq \left|(kn+\mu)!n!\right|_p\cdot \left(\max\{\left|\alpha_j\right|_p\}\right)^{(k+1)n}.
\end{align*}
Hence we have proved the estimate for the term $S_{n,\mu,j}(1)$ too.
\end{proof}

\subsection{Estimates for the functions $\pi(x;m,a)$ and $\theta(x;m,a)$}
\label{sec:piTheta}
In this section we derive estimates for primes in arithmetic progressions and for related functions. Indeed, we consider functions $\pi(x;m,a)$ and $\theta(x;m,a)$ which are given in \eqref{def:pitheta}.
The estimates are later used in Section \ref{sec:EstDepend} to derive estimates for some sums which are used to prove the main results.

Recall that $\mathrm{Li}(x):=\int_2^x (\log{t})^{-1} \, dt$. First, we mention three results which are used to prove our estimates for functions $\pi(x;m,a)$ and $\theta(x;m,a)$:
\begin{theorem}
\label{pi}
\cite[Special case of Corollary 1]{GM2019} Let $a$ and $m \geq 1$ be integers and $x \geq 2$ be a real number such that $\gcd(a, m) = 1$. Assume that the $m$th cyclotomic Dedekind zeta function satisfies the GRH. Then we have
\begin{equation*}
    \left|\pi(x;m,a)-\frac{\mathrm{Li}(x)}{\varphi(m)}\right| \leq \sqrt{x}\left(\frac{\log x}{8\pi}+\left(\frac{1}{2\pi}+\frac{3}{\log x}\right)\log m+\frac{1}{4 \pi}+\frac{6}{\log x}\right).
\end{equation*}
\end{theorem}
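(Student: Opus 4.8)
The plan is to deduce this explicit bound directly from the cited result, since the statement is explicitly flagged as a "special case of Corollary 1" in \cite{GM2019}. First I would recall the general setup of \cite{GM2019}: under GRH for the Dedekind zeta function of the $m$th cyclotomic field $\mathbb{Q}(\zeta_m)$, one obtains an explicit bound of the shape
\begin{equation*}
    \left|\pi(x;m,a)-\frac{\mathrm{Li}(x)}{\varphi(m)}\right| \leq \sqrt{x}\left(A\log x + B\log m + C\right)
\end{equation*}
with explicit constants depending only on lower-order logarithmic factors. The specific numerical constants $\tfrac{1}{8\pi}$, $\tfrac{1}{2\pi}+\tfrac{3}{\log x}$, and $\tfrac{1}{4\pi}+\tfrac{6}{\log x}$ are precisely the ones produced in the proof of Corollary 1 of that paper, so the task reduces to verifying that the hypotheses match: $\gcd(a,m)=1$, $m\ge 1$, $x\ge 2$, and GRH for the relevant Dedekind zeta function.

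The core analytic input behind such a bound, which I would outline for completeness, is the explicit formula relating $\psi(x;m,a) = \sum_{n\le x,\, n\equiv a} \Lambda(n)$ to a sum over zeros of Dirichlet $L$-functions $L(s,\chi)$ for $\chi$ ranging over characters mod $m$, together with the observation that $\prod_{\chi \bmod m} L(s,\chi)$ is (up to finitely many Euler factors at primes dividing $m$) the Dedekind zeta function of $\mathbb{Q}(\zeta_m)$; hence GRH for that zeta function gives $\mathrm{Re}(\rho) = \tfrac12$ for all the relevant zeros. One then bounds the sum over zeros using an explicit zero-counting estimate $N(T,\chi) \ll T\log(m T)$, converts from $\psi$ to $\theta$ to $\pi$ by partial summation, and collects the error terms. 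The passage from $\theta(x;m,a)$ to $\pi(x;m,a)$ via $\pi(x;m,a) = \frac{\theta(x;m,a)}{\log x} + \int_2^x \frac{\theta(t;m,a)}{t(\log t)^2}\,dt$ and comparison with $\mathrm{Li}(x)$ is what generates the $3/\log x$ and $6/\log x$ terms attached to the $\log m$ and constant parts.

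The main obstacle — and the reason I would simply cite \cite{GM2019} rather than reprove it — is the careful tracking of all numerical constants through the explicit formula, the zero-density estimate, and the two partial summations; getting the clean closed form above with constants like $\tfrac{1}{8\pi}$ requires the optimized smoothing and contour choices of that paper. Since the excerpt permits assuming earlier-stated results and this theorem is presented as a black-box citation, the "proof" here is really just the identification of parameters, and I would state it as such: the displayed inequality is exactly \cite[Corollary 1]{GM2019} specialized to the cyclotomic case, valid for all $x\ge 2$ under the stated GRH hypothesis.
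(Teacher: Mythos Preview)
Your proposal is correct and matches the paper's treatment exactly: the paper gives no proof of this statement at all, presenting it purely as a black-box citation of \cite[Corollary 1]{GM2019}, which is precisely what you conclude. The additional analytic context you sketch (explicit formula, zero-counting, partial summation from $\psi$ to $\theta$ to $\pi$) is accurate background but goes beyond what the paper itself provides.
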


\begin{lemma}
\label{lemma:Var}
\cite[Theorem 15, estimates (3.41) and (3.42)]{RS1962}
Let $m$ be an integer and $\gamma$ be Euler's constant. For $3 \leq m$ we have
\begin{equation*}
    \frac{m}{\varphi(m)}<e^\gamma \log\log m+\frac{2.50637}{\log\log m}.
\end{equation*}
\end{lemma}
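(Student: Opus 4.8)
The plan is to recover this classical estimate of Rosser and Schoenfeld by reducing to primorials and then invoking effective forms of Mertens' third theorem together with effective Chebyshev prime estimates; the whole argument is unconditional.

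Since $m/\varphi(m)=\prod_{p\mid m}(1-1/p)^{-1}$ depends only on the set of prime divisors of $m$ and $p\mapsto(1-1/p)^{-1}$ is decreasing, if $m$ has exactly $k$ distinct prime divisors then $m/\varphi(m)\le\prod_{i=1}^{k}(1-1/p_i)^{-1}=N_k/\varphi(N_k)$, where $p_i$ is the $i$-th prime and $N_k=p_1\cdots p_k$ the $k$-th primorial, and moreover $m\ge N_k$. Write $g(x):=e^\gamma\log\log x+2.50637/\log\log x$, defined for $x\ge 3$. Minimising $u\mapsto e^\gamma u+2.50637/u$ over $u>0$ shows $g(x)\ge 2\sqrt{2.50637\,e^\gamma}=4.225\ldots$ for all $x\ge 3$, whereas for $\omega(m)\le 3$ one has $m/\varphi(m)\le N_3/\varphi(N_3)=30/8=3.75$; so the inequality is immediate in that range. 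For $\omega(m)=k\ge 4$ we have $m\ge N_k\ge N_4=210$, and $g$ is increasing on $[27,\infty)$ because its derivative in $u=\log\log x$ is $e^\gamma-2.50637/u^2>0$ once $u>\sqrt{2.50637\,e^{-\gamma}}\approx 1.19$; hence $g(m)\ge g(N_k)$ and it suffices to prove $N_k/\varphi(N_k)<g(N_k)$ for every $k\ge 4$.

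For a primorial, $N_k/\varphi(N_k)=\prod_{p\le p_k}(1-1/p)^{-1}$, so I would apply the effective third Mertens estimate $\prod_{p\le x}(1-1/p)^{-1}<e^\gamma(\log x)\bigl(1+c_0/\log^2 x\bigr)$ valid for $x$ above an explicit threshold (Rosser--Schoenfeld), obtaining $N_k/\varphi(N_k)<e^\gamma(\log p_k)\bigl(1+c_0/\log^2 p_k\bigr)$. Next, $\log N_k=\sum_{i\le k}\log p_i=\theta(p_k)$ with $\theta(x)=\sum_{p\le x}\log p$ the Chebyshev function, and an effective lower bound $\theta(x)>x(1-\eta(x))$ with an explicit, slowly decaying $\eta(x)\to 0$ gives $\log\log N_k=\log\theta(p_k)>\log p_k-\eta'(p_k)$, i.e. $\log p_k<\log\log N_k+\eta'(p_k)$ for a small explicit $\eta'$. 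Substituting, $N_k/\varphi(N_k)<e^\gamma\bigl(\log\log N_k+\eta'(p_k)\bigr)\bigl(1+c_0/\log^2 p_k\bigr)$; expanding and bounding the cross terms reduces the claim to an inequality of the shape $e^\gamma\eta'(p_k)+(\text{lower order})<2.50637/\log\log N_k$, which holds once $k$ (equivalently $p_k$) is large. The finitely many intermediate $k$ with $4\le k<k_0$ would be settled directly by computing $N_k/\varphi(N_k)$ and $g(N_k)$, a bounded check since the gap $g(N_k)-N_k/\varphi(N_k)$ eventually grows.

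The qualitative shape $m/\varphi(m)=e^\gamma\log\log m+O(1/\log\log m)$ follows at once from Mertens plus the prime number theorem, so the difficulty is entirely quantitative. I expect the main obstacle to be propagating the explicit error terms from the effective Mertens and Chebyshev estimates through the two substitutions while keeping the residual constant at or below $2.50637$, together with pinning down the threshold $k_0$ small enough that the remaining cases form a short, finite verification — the trivial bound $m/\varphi(m)\le 3.75<\min g$ already clears $\omega(m)\le 3$, so only a modest window of primorials needs hand-checking.
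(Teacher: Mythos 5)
The paper does not prove this lemma at all: it is quoted directly from Rosser and Schoenfeld, so the only meaningful comparison is with their original argument, and your outline is essentially a reconstruction of it --- reduction to primorials as the extremal case, an effective Mertens product bound, an effective lower bound for Chebyshev's $\theta$, and a finite verification. Your structural steps are correct: $m/\varphi(m)\le N_k/\varphi(N_k)$ with $m\ge N_k$ for $k=\omega(m)$; the case $\omega(m)\le 3$ is cleared by $3.75<2\sqrt{2.50637\,e^{\gamma}}\approx 4.2256$; and $g$ is indeed increasing past $e^{e^{1.19}}\approx 27$, so it suffices to treat primorials with $k\ge 4$.

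Two quantitative points are worth flagging, since they are exactly where a naive execution of your plan would stall, and your sketch leaves them as placeholders. First, the constant $2.50637$ is essentially attained: at $N_9=2\cdot3\cdot5\cdot7\cdot11\cdot13\cdot17\cdot19\cdot23=223092870$ one has $N_9/\varphi(N_9)=6.1129105\ldots$ while $e^{\gamma}\log\log N_9+2.50637/\log\log N_9=6.1129108\ldots$, so the two sides agree to about seven significant figures. This single primorial is why Rosser--Schoenfeld must single out an exceptional value and state two estimates, and why the paper cites both (3.41) and (3.42); your ``bounded check'' therefore confronts a near-equality rather than growing slack, and the asymptotic threshold $k_0$ must be brought down to meet it. Second, the asymptotic step is sensitive to the strength of the effective inputs. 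Writing the Mertens error as $e^{\gamma}/(2\log p_k)$ and the error in replacing $\log p_k$ by $\log\log N_k$ as $e^{\gamma}\eta'(p_k)$, you need roughly $e^{\gamma}\left(\eta'(p_k)\log p_k+\tfrac12\right)<2.50637$, i.e.\ $\eta'(p_k)\lesssim 0.9/\log p_k$. The weakest standard bound $\theta(x)>x(1-1/\log x)$ only gives $\eta'\approx 1/\log p_k$, for which the required inequality fails for \emph{every} $k$, so the argument never closes; you need a sharper bound such as $\theta(x)>x\left(1-1/(2\log x)\right)$ on the relevant range. None of this invalidates your plan --- it is the plan Rosser and Schoenfeld executed --- but the entire difficulty of the lemma is concentrated in these two closing steps.
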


\begin{corollary}
\label{corollary:varLower}
Let $m\ge 3$, $m \not\in\{3,4,6,8,10,12,14,18,20,24,30,36,42,60\}$
be an integer. Then $\varphi(m)>m^{0.7}$.
\end{corollary}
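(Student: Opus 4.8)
The plan is to recast the desired inequality as a statement about the squarefree radical of $m$, settle large $m$ with Lemma~\ref{lemma:Var}, and reduce the remaining range to a short finite check.

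First I would rewrite the inequality. Since $\frac{m}{\varphi(m)}=\prod_{p\mid m}\frac{p}{p-1}$ depends only on the radical $r:=\prod_{p\mid m}p$ and equals $\frac{r}{\varphi(r)}$, the inequality $\varphi(m)>m^{0.7}$ is equivalent to $m^{0.3}>\frac{r}{\varphi(r)}$. As $m\ge r$, this already follows whenever $r^{0.3}>\frac{r}{\varphi(r)}$, i.e.\ whenever $\varphi(r)>r^{0.7}$ for the squarefree number $r$. Hence it suffices to (i) determine the squarefree $r$ with $\varphi(r)\le r^{0.7}$, and (ii) for each such $r$ determine the integers $m$ with radical $r$ satisfying $m^{0.3}\le \frac{r}{\varphi(r)}$, i.e.\ $m\le\bigl(\tfrac{r}{\varphi(r)}\bigr)^{10/3}$; intersecting the union of these with $\{m\ge 3\}$ should reproduce exactly the exceptional set.

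For step (i), Lemma~\ref{lemma:Var} gives $\frac{r}{\varphi(r)}<e^\gamma\log\log r+\frac{2.50637}{\log\log r}$ for $r\ge 3$, and I would show the right-hand side is $<r^{0.3}$ for all $r$ above an explicit bound $r_0$ (around $150$). This amounts to checking that $x\mapsto x^{0.3}-e^\gamma\log\log x-\frac{2.50637}{\log\log x}$ is increasing on $[3,\infty)$: its derivative $0.3x^{-0.7}-\frac{1}{x\log x}\bigl(e^\gamma-\frac{2.50637}{(\log\log x)^2}\bigr)$ is positive where the bracket is negative (which happens near $x=3$, since there $(\log\log x)^2<2.50637/e^\gamma$) and positive elsewhere because $0.3x^{0.3}\log x>e^\gamma$ once $x$ is moderately large, and then to verify positivity at $x=r_0$. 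For squarefree $r<r_0$ I would reduce the bookkeeping via the number $\omega=\omega(r)$ of distinct prime factors: for fixed $\omega$ the product $\prod\frac{p_i}{p_i-1}$ is maximized and $r^{0.3}$ minimized when $r$ is the product of the first $\omega$ primes, so it is enough to test primorials together with a short explicit list per $\omega$. Testing $2,6,30,210$ shows every squarefree $r$ with $\omega(r)\ge 4$ is good, and $\omega\in\{1,2,3\}$ reduces to a finite list, yielding that the bad squarefree numbers are exactly $\{2,3,6,10,14,30,42\}$.

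For step (ii), I would compute $\bigl(\tfrac{r}{\varphi(r)}\bigr)^{10/3}$ for these seven radicals and list the integers $m$ with radical $r$ below the threshold; e.g.\ $r=6$ gives threshold $3^{10/3}\approx 39$, hence $m\in\{6,12,18,24,36\}$, while $r=2$ gives $2^{10/3}\approx 10.1$, hence $m\in\{4,8\}$ (discarding $m=2$). The union over the seven radicals should be precisely $\{3,4,6,8,10,12,14,18,20,24,30,36,42,60\}$, completing the proof. The only real obstacle is the finite bookkeeping: pinning down the threshold $r_0$ beyond which Lemma~\ref{lemma:Var} suffices, handling the monotonicity argument carefully on the interval near $x=3$ where $\frac{2.50637}{(\log\log x)^2}$ is large, and enumerating the small squarefree radicals and their admissible multiples without omission.
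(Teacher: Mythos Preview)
Your approach is correct but takes a genuinely different route from the paper. The paper's proof is a two-line argument: for $m\ge 212$ one checks that $2.50637/\log\log m<0.5\,e^\gamma\log\log m$ and $1.5\,e^\gamma\log\log m<m^{0.3}$, so Lemma~\ref{lemma:Var} immediately gives $m/\varphi(m)<m^{0.3}$; the remaining values $3\le m\le 211$ are simply checked numerically one by one. Your radical reduction is more structured: by observing that $m/\varphi(m)$ depends only on the squarefree radical $r$, you replace the brute-force check of roughly two hundred integers by (i) isolating the seven bad squarefree radicals $\{2,3,6,10,14,30,42\}$ via an extremal primorial argument, and (ii) listing, for each, the handful of multiples below the threshold $(r/\varphi(r))^{10/3}$. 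Both arguments use Lemma~\ref{lemma:Var} for the asymptotic part. Your approach buys a much shorter and more transparent finite enumeration at the cost of some additional structural reasoning (the monotonicity argument and the primorial extremality); the paper's version is quicker to write but leaves a longer machine check.
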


\begin{proof}
Let us first assume that $m \geq 212$. Then $2.50637/\log\log m<0.5e^\gamma\log\log m$, $1.5e^\gamma\log\log m<m^{0.3}$ and by Lemma \ref{lemma:Var} the claim holds for $m \ge 212$. Further, we check numerically the cases $m \leq 211$ where we find that the claim holds with only those exceptions.
\end{proof}

Now we derive an estimate for $\pi(x;m,a)$ when $x$ is large enough with respect to $m$:
\begin{lemma}
\label{lemma:estPi}
Suppose the same assumptions as in Theorem \ref{pi}. Assume also that $m \geq 3$ and $x \geq \max\{m^{\varphi(m)},1865\}$. Then
\begin{equation*}
\left|\pi(x;m,a)-\frac{\mathrm{Li}(x)}{\varphi(m)}\right| \leq \frac{\sqrt{x}\log x}{8\pi}+ \sqrt{x}\left(\frac{\log m}{2\pi}+\frac{3}{\varphi(m)}+0.341\right).
\end{equation*}
\end{lemma}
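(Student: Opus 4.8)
The plan is to start from Theorem~\ref{pi} and simply bound each of the error terms using the hypothesis $x \geq \max\{m^{\varphi(m)}, 1865\}$. The term $\frac{\sqrt{x}\log x}{8\pi}$ is already in the desired form, so it can be carried over verbatim. The remaining job is to absorb the three terms $\frac{3}{\log x}\log m$, $\frac{1}{4\pi}$ and $\frac{6}{\log x}$ into the quantity $\sqrt{x}\left(\frac{\log m}{2\pi}+\frac{3}{\varphi(m)}+0.341\right)$, i.e. to show
\[
  \frac{3\log m}{\log x}+\frac{1}{4\pi}+\frac{6}{\log x} \;\leq\; \frac{3}{\varphi(m)}+0.341 .
\]

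First I would use $x \geq m^{\varphi(m)}$, hence $\log x \geq \varphi(m)\log m$, to get $\frac{3\log m}{\log x} \leq \frac{3}{\varphi(m)}$; this exactly accounts for the $\frac{3}{\varphi(m)}$ summand on the right. It then remains to check $\frac{1}{4\pi}+\frac{6}{\log x} \leq 0.341$. Since $\frac{1}{4\pi} \approx 0.0796$, this reduces to $\frac{6}{\log x} \leq 0.341 - \frac{1}{4\pi}$, i.e. $\log x \geq \frac{6}{0.341 - 1/(4\pi)} \approx 22.98$, which follows once $x \geq e^{23}$. The bound $x \geq 1865$ alone is not enough here ($\log 1865 \approx 7.5$), so I would instead invoke $x \geq m^{\varphi(m)}$ again: for $m \geq 3$ one has $m^{\varphi(m)} \geq 3^2 = 9$ in the worst small case, which is still too weak, so one must be a little more careful. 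Actually for $m=3$, $m^{\varphi(m)}=9$ and we are forced to use $x\geq 1865$ instead — so the inequality $\frac{1}{4\pi}+\frac{6}{\log x}\le 0.341$ fails for $m=3$. The resolution is that when $m^{\varphi(m)}$ is small, $\varphi(m)$ is also small, so $\frac{3}{\varphi(m)}$ is large and provides extra slack: one should not split the inequality as above but keep $\frac{3}{\varphi(m)}$ available. Concretely, I would argue $\frac{3\log m}{\log x}+\frac{6}{\log x} \leq \frac{3\log m+6}{\log x}$ and bound this by $\frac{3}{\varphi(m)}$ plus a small remainder using $\log x \geq \max\{\varphi(m)\log m,\ \log 1865\}$, treating the finitely many small $m$ (where $\varphi(m)\log m < \log 1865$) separately.

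The cleanest organization: split into the case $\varphi(m)\log m \geq \log 1865$ (so $\log x \geq \varphi(m)\log m$ is the binding constraint) and the case $\varphi(m)\log m < \log 1865$, which forces $m$ into a short finite list (roughly $m \leq 12$ or so), handled by direct numerical check using $\log x \geq \log 1865$. In the first case, $\frac{3\log m + 6}{\log x} \leq \frac{3}{\varphi(m)} + \frac{6}{\varphi(m)\log m}$, and since $\varphi(m)\log m \geq \log 1865 > 7.5$ we get $\frac{6}{\varphi(m)\log m} < 0.8$, not yet $\leq 0.341 - 1/(4\pi)$; so I would sharpen by noting that in this regime $m$ is not tiny — e.g. $\varphi(m)\log m \geq \log 1865$ already forces $m \geq 11$, giving $\varphi(m)\log m \geq 10\log 11 > 23$, hence $\frac{6}{\varphi(m)\log m} < 0.27$ and $\frac{1}{4\pi} + 0.27 < 0.35$; a slightly tighter constant chase closes the gap, and the finitely many remaining $m$ are done by hand.

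The main obstacle is purely bookkeeping: confirming that the constant $0.341$ is exactly large enough to absorb $\frac{1}{4\pi}$ plus the leftover $\frac{6}{\log x}$ term uniformly in $m \geq 3$, which hinges on correctly exploiting the two lower bounds on $x$ in tandem (neither suffices alone) and on verifying the short finite list of small moduli. No deep input beyond Theorem~\ref{pi} is needed; Corollary~\ref{corollary:varLower} and Lemma~\ref{lemma:Var} are not required for this particular lemma, though they may streamline identifying which small $m$ need separate treatment.
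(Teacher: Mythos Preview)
Your approach has a genuine gap: Theorem~\ref{pi} alone is \emph{strictly weaker} than the target bound for several small moduli at $x=\max\{m^{\varphi(m)},1865\}$, so no amount of ``direct numerical check'' of the finite list can succeed. For instance, take $m=5$, so $\varphi(m)=4$, $m^{\varphi(m)}=625<1865$, and $x=1865$. The inequality you must establish is
\[
\frac{3\log m}{\log x}+\frac{1}{4\pi}+\frac{6}{\log x}\;\leq\;\frac{3}{\varphi(m)}+0.341,
\]
but the left side equals $\frac{3\log 5+6}{\log 1865}+\frac{1}{4\pi}\approx 1.518$ while the right side is $\frac{3}{4}+0.341=1.091$. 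The same failure occurs for $m=7,8,9,10,12$ (and others): e.g.\ for $m=7$ one has $x\geq 7^{6}=117649$, left side $\approx 1.094$, right side $0.841$. So the extra slack from $\frac{3}{\varphi(m)}$ that you hoped to exploit is not enough once $\varphi(m)\geq 4$, and your ``cleanest organization'' collapses precisely on the finite list it was meant to dispatch.

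The paper's proof avoids this by bringing in a second, unconditional input: for $3\leq m\leq 10^{5}$ and $1865\leq x\leq 10^{11}$ it quotes \cite[Theorem~1.9]{BGOR2018}, giving $\bigl|\pi(x;m,a)-\mathrm{Li}(x)/\varphi(m)\bigr|\leq 2.734\sqrt{x}/\log x$, which is easily absorbed into the target (indeed $2.734\sqrt{x}/\log x<\frac{\sqrt{x}\log x}{8\pi}+0.064\sqrt{x}$ for $x\geq 1865$). The remaining range has $x\geq 10^{11}$ --- either directly, or because $m\geq 10^{5}$ forces $m^{\varphi(m)}\geq 10^{11}$ via Corollary~\ref{corollary:varLower} --- and there $\frac{1}{4\pi}+\frac{6}{\log x}\leq 0.341$ holds, so Theorem~\ref{pi} suffices. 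Contrary to your final remark, both the external numerical result and Corollary~\ref{corollary:varLower} are genuinely needed; the lemma does not follow from Theorem~\ref{pi} by constant-chasing alone.
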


\begin{proof}
Let us first consider case $3 \leq m \leq 10^5$ and $x \leq 10^{11}$. Since $x \geq 1865$, by \cite[Theorem 1.9]{BGOR2018} we have 
\begin{equation}
\label{eq:piSmallValuesEst}
    \left|\pi(x;m,a)-\frac{\mathrm{Li}(x)}{\varphi(m)}\right| \leq 
    2.734\frac{\sqrt{x}}{\log x} 
    <\frac{\sqrt{x}\log x}{8\pi}+0.064\sqrt{x}.
\end{equation}
Let us now consider the rest of the cases. 

The rest of the cases follow easily from Theorem \ref{pi}: Using the theorem and the assumption $x \geq m^{\varphi(m)}$, we can deduce that
\begin{align}
    &\left|\pi(x;m,a)-\frac{\mathrm{Li}(x)}{\varphi(m)}\right|\leq \frac{\sqrt{x}\log x}{8\pi}+ \sqrt{x}\left(\frac{\log m}{2\pi}+\frac{3}{\varphi(m)}+\frac{1}{4 \pi}+\frac{6}{\log x}\right)\label{eq:pivar}.
\end{align}
Suppose that $x \geq 10^{11}$, or that $x \geq m^{\varphi(m)}$ with $m \geq 10^5$. Then $x \geq 10^{11}$ in any case (in the second case this follows from Corollary \ref{corollary:varLower}) and the assumptions that $m \geq 10^5$ and $x \geq m^{\varphi(m)}$). Under this hypothesis $1/(4\pi)+6/\log x \leq 0.341$ and the claim follows from \eqref{eq:piSmallValuesEst} and \eqref{eq:pivar}.
\end{proof}

Next we prove estimates for function $\theta(x;m,a)$. In order to prove those estimates, we use function
\begin{equation*}
\psi(x;m,a) := \sum_{\substack{p^k \leq x \\ p^k \equiv a \pmod m}} \log p.
\end{equation*}
First, we mention a useful estimate for the function $\psi(x;m,a)$:

\begin{theorem}
\label{psi} 
\cite[Special case of Theorem 1]{GM2019} Let $a$ and $m \geq 1$ be integers such that $\gcd(a, m) = 1$ and let $x \geq 1$ be a real number. Assume that the $m$th cyclotomic Dedekind zeta function satisfies the GRH. Then we have
\begin{equation*}
   \left|\psi(x;m,a)-\frac{x}{\varphi(m)}\right| \leq \sqrt{x}\left(\frac{\log^2 x}{8\pi}+\left(\frac{\log x}{2\pi}+2\right)\log m+2\right).
\end{equation*}
\end{theorem}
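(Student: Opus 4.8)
The statement is quoted verbatim from \cite{GM2019} as a special case of their general explicit prime-ideal theorem under GRH, so the plan is to obtain it by specialization rather than by reproving the analytic core. The general result of Grenié and Molteni controls, under GRH for the Dedekind zeta function of a number field $K$, the error in the Chebyshev-type function $\psi$ attached to an ideal class (or, here, to a residue class $a$ with $\gcd(a,m)=1$) in terms of the degree $n_K=[K:\mathbb{Q}]$ and the absolute discriminant $|d_K|$. Thus the first step is to specialize to $K=\mathbb{Q}(\zeta_m)$ and recall the two standard facts about the $m$-th cyclotomic field: $[\mathbb{Q}(\zeta_m):\mathbb{Q}]=\varphi(m)$, and (via the conductor--discriminant formula $|d_K|=\prod_{\chi\bmod m}\mathfrak{f}_\chi$ with each conductor $\mathfrak{f}_\chi\mid m$, or directly from the classical formula for the discriminant of $\mathbb{Q}(\zeta_m)$) the bound $|d_K|\le m^{\varphi(m)}$, i.e. $\log|d_K|\le\varphi(m)\log m$.

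Next I would substitute $n_K=\varphi(m)$ and $\log|d_K|\le\varphi(m)\log m$ into the general inequality of \cite[Theorem 1]{GM2019} and collect terms: the $\log^2 x$ coefficient $\tfrac{1}{8\pi}$, the $(\log x)(\log m)$ coefficient $\tfrac{1}{2\pi}$, and the lower-order pieces (the $2\log m$ and the constant $2$) should fall out after bounding a couple of $\varphi(m)$-denominators by $1$ and absorbing the contribution of prime powers of ramified primes. The latter bookkeeping is in fact trivial in this setting: since $\gcd(a,m)=1$, any $n=p^k$ counted in $\psi(x;m,a)$ satisfies $\gcd(n,m)=1$, so no prime power of a prime dividing $m$ occurs at all. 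The arithmetic of matching the numerical constants is the only real content, and the step I expect to spend the most care on is checking that the general estimate in \cite{GM2019} is stated with exactly these normalisations (degree and $\log|d_K|$ rather than $\log|d_K|/n_K$, a logarithm of $x$ rather than of a conductor times $x$, and so on), so that the substitution is literal.

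If instead one wanted a self-contained argument, the route would be: the orthogonality decomposition $\psi(x;m,a)=\varphi(m)^{-1}\sum_{\chi\bmod m}\bar\chi(a)\psi(x,\chi)$; the truncated explicit formula for each $\psi(x,\chi)$; the observation that under GRH every nontrivial zero $\rho$ of $L(s,\chi)$ satisfies $|x^\rho|=\sqrt{x}$; a Riemann--von Mangoldt type count of the zeros of $L(s,\chi)$ up to height $T$ in terms of $\log(\mathfrak{f}_\chi(T+2))$; and finally summation over $\chi$ using $\sum_{\chi\bmod m}\log\mathfrak{f}_\chi=\log|d_K|\le\varphi(m)\log m$ together with the separate, elementary estimate for the principal-character term $\psi(x)$. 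The genuine obstacle in that approach — and the reason the paper defers to \cite{GM2019} — is extracting the clean small constants $\tfrac{1}{8\pi}$ and $\tfrac{1}{2\pi}$ from the explicit formula and the zero-counting estimate, a delicate optimisation already carried out by Grenié and Molteni.
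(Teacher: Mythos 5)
Your plan matches the paper exactly: the paper gives no proof of this statement, citing it as a special case of \cite[Theorem 1]{GM2019}, and your sketch of specializing Grenié--Molteni's general Chebotarev estimate to $L=\mathbb{Q}(\zeta_m)$, $K=\mathbb{Q}$ (using $n_L=\varphi(m)$, $\log|d_L|\le\varphi(m)\log m$, and the identification of $\psi_C$ with $\psi(x;m,a)$ since $\gcd(a,m)=1$) is precisely what that citation entails. The only work is the constant bookkeeping you flag, which is correct as stated.
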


Now we denote
\begin{equation*}
    a(m):=
    \begin{cases}
     0.129  &\text{ if } 3\leq m\leq 432 \\
     0.399 &\text{ if } m \geq 433 
    \end{cases}.
\end{equation*}
and move on to estimate function $\theta(x;m,a)$. 

\begin{lemma}
\label{corollary:theta}
Let $x,m$ and $a$ be as in Theorem \ref{psi}, $m \geq 3$ and $x \geq \max\{\sqrt{1865},m\}$. Assume that the $m$th cyclotomic Dedekind zeta function satisfies the GRH. Then we have 
\begin{equation*}
   \left|\theta(x;m,a)-\frac{x}{\varphi(m)}\right| \leq  a(m)\sqrt{x}\log^2 x.
\end{equation*}
\end{lemma}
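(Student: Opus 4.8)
The plan is to bound $\theta(x;m,a)$ by comparing it with $\psi(x;m,a)$ and then invoking Theorem~\ref{psi}. First I would write the standard decomposition
\[
\psi(x;m,a)=\theta(x;m,a)+\sum_{\substack{j\ge 2,\ p^j\le x\\ p^j\equiv a\pmod m}}\log p,
\]
so that
\[
\bigl|\theta(x;m,a)-\tfrac{x}{\varphi(m)}\bigr|\le\bigl|\psi(x;m,a)-\tfrac{x}{\varphi(m)}\bigr|+\sum_{j\ge 2}\theta(x^{1/j};m,a).
\]
For the error term from prime powers I would drop the congruence restriction and use a crude bound $\theta(y)\le c\,y$ for an explicit constant $c$ (e.g. $\theta(y)<1.01624\,y$ from Rosser--Schoenfeld, or even $\theta(y)\le 2y$ suffices), together with $\sum_{j\ge 2}\theta(x^{1/j};m,a)\le \theta(\sqrt x)+\theta(x^{1/3})\log_2 x$, which is $O(\sqrt x)$ (in fact $\le c\sqrt x + c\,x^{1/3}\frac{\log x}{\log 2}$), hence $\ll \sqrt x \log x$ and comfortably absorbed. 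Combined with Theorem~\ref{psi}, this gives
\[
\bigl|\theta(x;m,a)-\tfrac{x}{\varphi(m)}\bigr|\le \sqrt x\Bigl(\tfrac{\log^2 x}{8\pi}+\bigl(\tfrac{\log x}{2\pi}+2\bigr)\log m+2\Bigr)+(\text{prime-power term}).
\]

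The remaining work is purely to repackage the right-hand side as $a(m)\sqrt x\log^2 x$ under the hypothesis $x\ge\max\{\sqrt{1865},m\}$. Here I would use $\log m\le \log x$ (from $x\ge m$) to fold the $\log m$ term into a multiple of $\log^2 x$ and a multiple of $\log x$, and then use $x\ge\sqrt{1865}$, i.e. $\log x\ge \tfrac12\log 1865\approx 3.766$, to convert the lower-order $\log x$, constant, and $x^{1/3}$ contributions into a small additional multiple of $\log^2 x$. Collecting coefficients, $\tfrac{1}{8\pi}+\tfrac{1}{2\pi}=\tfrac{5}{8\pi}\approx 0.199$ is the leading constant, and one checks the lower-order slack pushes this below $0.399$ for all $x\ge\sqrt{1865}$, giving the $m\ge 433$ case. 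For the sharper constant $0.129$ when $3\le m\le 432$, I expect one needs either a better explicit bound on $\psi(x;m,a)$ or $\theta(x;m,a)$ in a bounded range (e.g. direct numerical verification for small $x$ using tables of primes in progressions, à la \cite{BGOR2018} as in Lemma~\ref{lemma:estPi}) to handle the transitional region, and then the asymptotic estimate above—now with $\log m\le\log 432$ a genuine constant rather than growing with $x$—for large $x$; the point is that with $\log m$ bounded, the leading coefficient is only $\tfrac{1}{8\pi}\approx 0.0398$, and the remaining terms drop below $0.129\log^2 x$ once $\log x$ is moderately large.

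The main obstacle is the bookkeeping in the small-$m$, moderate-$x$ range: establishing the constant $0.129$ for $3\le m\le 432$ requires controlling the crossover between the numerically-checked region and the region where Theorem~\ref{psi} is strong enough, and making sure the threshold $x\ge\sqrt{1865}$ is exactly where these two regimes meet. Everything else—the prime-power estimate, the reduction to $\psi$, and the final coefficient collection—is routine. I would organize the proof as: (i) the $\psi$-to-$\theta$ reduction with the explicit prime-power bound; (ii) the large-$x$ estimate from Theorem~\ref{psi} yielding both cases of $a(m)$; (iii) a finite check (or citation of an explicit bound such as \cite{BGOR2018}) closing the small-$x$ gap for $3\le m\le 432$.
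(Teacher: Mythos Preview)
Your strategy matches the paper's: reduce $\theta$ to $\psi$ via an explicit prime-power bound (the paper quotes $\psi-\theta<1.4262\sqrt{x}$ from \cite{EHP2022}), then combine Theorem~\ref{psi} for large $x$ with tabulated explicit bounds in bounded ranges. The case split and the sources you anticipate (notably \cite{BGOR2018}) are essentially what the paper uses; it also draws on \cite{RR1996} and \cite{D1998} for the small-$m$ regime.

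There is, however, one quantitative gap. You assert that for $m\ge 433$ the estimate from Theorem~\ref{psi}, after folding $\log m\le\log x$, already yields the constant $0.399$ for all $x\ge\max\{\sqrt{1865},m\}$. This is not true: at $x=m=433$ one has $\log x\approx 6.07$, and the single contribution $2\log m/\log^2 x\le 2/\log x\approx 0.33$, added to the leading $5/(8\pi)\approx 0.199$, already overshoots $0.399$ before the remaining terms are counted. The paper therefore also needs the numerical bound from \cite{BGOR2018} (valid for $m\le 10^5$ and $x\le 10^{11}$) to cover $433\le m\le 10^5$ with moderate $x$, and invokes Theorem~\ref{psi} only once $x>10^{11}$ or $m>10^5$ (so that in either case $x>10^5$). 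In short, the tabulated input is required for \emph{both} values of $a(m)$, not just for $m\le 432$; your step~(iii) has to extend into the $m\ge 433$ range as well.
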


\begin{proof}
First we notice that as in the third paragraph of the proof of Theorem 1 in \cite{EHP2022}, we have
\begin{equation}
\label{eq:psiEctraTerm}
   0\leq  \psi(x;m,a)-\theta(x;m,a)<1.4262\sqrt{x}.
\end{equation}
Hence we could use the bound given in Theorem \ref{psi} alone. However, applying also other known results we are able to derive a sharper result for the claim.

Let us first consider the case $3 \leq m \leq 72$. First, if $\sqrt{1865}\leq x \leq 10^{10}$, then by \cite[Table 2]{RR1996} we have
\begin{equation*}
    \left|\theta(x;m,a)-\frac{x}{\varphi(m)}\right|<1.817557\sqrt{x}\leq \frac{1.817557}{(\log \sqrt{1865})^2}\sqrt{x}\log^2 x<0.129\sqrt{x}\log^2 x.
\end{equation*}
Similarly, if $x \geq 10^{10}$, then by \cite[Corollary 3.3]{D1998} and estimate \eqref{eq:psiEctraTerm}, we get
\begin{multline*}
    \left|\theta(x;m,a)-\frac{x}{\varphi(m)}\right|<\frac{11}{32\pi}\sqrt{x}\log^2 x+1.4262\sqrt{x}=\left(\frac{11}{32\pi}+\frac{1.4262}{\left(\log(10^{10})\right)^2}\right)\sqrt{x}\log^2 x \\
    < 0.113\sqrt{x}\log^2 x. 
\end{multline*}
Hence, the claim holds in the case $3 \leq m \leq 72$.

Let us now consider the case $73 \leq m \leq 10^5$. By \cite[Theorem 1.9]{BGOR2018}
\begin{equation}
\label{est:theta73145Third}
    \left|\theta(x;m,a)-\frac{x}{\varphi(m)}\right|<2.072\sqrt{x}\leq \frac{2.072}{(\log 73)^2}\sqrt{x}\log^2 x<0.113\sqrt{x}\log^2 x
\end{equation}
for $x \leq 10^{11}$. Further, if $x \geq 10^{11}$ and $m \leq 432$, then by estimate \eqref{eq:psiEctraTerm} and \cite[Theorem 3.7]{D1998}, we get
\begin{equation*}
    \left|\theta(x;m,a)-\frac{x}{\varphi(m)}\right|<1.4262\sqrt{x}+\frac{11}{32\pi}\sqrt{x}\log^2 x <0.112\sqrt{x}\log^2 x.
\end{equation*}
Hence, in the cases $73 \leq m \leq 432$ the claim holds, too. However, we still have to consider cases $433\leq m$ and $10^{11}<x$ as well as $m >10^5$.

In both cases we have $x\geq m > 10^5$. In addition to that, we apply Theorem \ref{psi} and estimate \eqref{eq:psiEctraTerm} and get
\begin{align*}
    &\left|\theta(x;m,a)-\frac{x}{\varphi(m)}\right|<\sqrt{x}\left(\frac{\log^2 x}{8\pi}+\left(\frac{\log x}{2\pi}+2\right)\log m+3.4262\right) \\
    &\quad< \sqrt{x}\log^2 x\left(\frac{1}{8\pi}+\frac{1}{2\pi}+\frac{2}{\log(10^5)}+\frac{3.4262}{\log^2(10^5)}\right) \\
    &\quad < 0.399\sqrt{x}\log^2 x,
\end{align*}
which proves the claim.
\end{proof}

Now we prove the following estimate for $\theta(x;m,a)$ when $x \geq \max\{m^{\varphi(m)}, 1865\}$:

\begin{lemma}
\label{corollary:theta2}
Let $x,m$ and $a$ be as in Theorem \ref{psi} and let also $m \geq 3$ and $x \geq \max\{m^{\varphi(m)}, 1865\}$. Assume that the $m$th cyclotomic Dedekind zeta function satisfies the GRH. Then we have
\begin{equation*}
    \left|\theta(x;m,a)-\frac{x}{\varphi(m)}\right|<0.092\sqrt{x}\log^2 x.
\end{equation*}
\end{lemma}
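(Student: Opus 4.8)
The plan is to follow the proof of Lemma~\ref{corollary:theta}, but to sharpen the constants by exploiting the two strengthenings in the hypothesis: that $x\geq 1865$ rather than $x\geq\sqrt{1865}$, which squares the logarithm in the denominator of the explicit bound used for moderate $x$, and — more importantly — that $x\geq m^{\varphi(m)}$, which gives $\log m\leq \frac{\log x}{\varphi(m)}$ and thereby tames the $\log m$ terms in Theorem~\ref{psi}. In each case I would bound $|\theta(x;m,a)-x/\varphi(m)|$ and divide by $\sqrt{x}\log^2 x$, aiming to show the resulting coefficient is below $0.092$.

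First, combining Theorem~\ref{psi} with \eqref{eq:psiEctraTerm} (so that the extra term $\psi-\theta$ contributes the constant $1.4262$, whence $3.4262=2+1.4262$) and dividing by $\sqrt{x}\log^2 x$ gives
\begin{multline*}
  \frac{\bigl|\theta(x;m,a)-x/\varphi(m)\bigr|}{\sqrt{x}\log^2 x}\;<\;\frac{1}{8\pi}+\Bigl(\frac{1}{2\pi}+\frac{2}{\log x}\Bigr)\frac{\log m}{\log x}+\frac{3.4262}{\log^2 x}\\
  \leq\;\frac{1}{8\pi}+\Bigl(\frac{1}{2\pi}+\frac{2}{\log x}\Bigr)\frac{1}{\varphi(m)}+\frac{3.4262}{\log^2 x}.
\end{multline*}
I would then split on the size of $m$. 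If $m\geq 19$, then (checking the finitely many $m$ with $\varphi(m)\in\{2,4,6\}$, all of which are $\leq 18$) we have $\varphi(m)\geq 7$, hence $\varphi(m)\geq 8$ since $\varphi(m)$ is even; therefore $x\geq m^{\varphi(m)}\geq 19^{8}$, so $\log x\geq 8\log 19>23$ and $\log m/\log x\leq 1/\varphi(m)\leq 1/8$. Inserting these bounds into the displayed inequality gives a coefficient below $0.08<0.092$.

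It then remains to treat $3\leq m\leq 18$, where $\log m\leq\log 18$ and $x\geq\max\{m^{\varphi(m)},1865\}$. For $1865\leq x\leq 10^{10}$ I would use the explicit bound \cite[Table 2]{RR1996}, valid for $m\leq 72$, namely $|\theta(x;m,a)-x/\varphi(m)|<1.817557\sqrt{x}$, so the coefficient is at most $1.817557/(\log 1865)^2<0.033<0.092$. For $x>10^{10}$ I would again use Theorem~\ref{psi} together with \eqref{eq:psiEctraTerm}, now using $\log m\leq\log 18$ and $\log x>\log(10^{10})>23$ in the displayed inequality, which once more yields a coefficient below $0.08$. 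These cases exhaust the hypothesis, and the result follows.

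The mathematical content is entirely supplied by Theorem~\ref{psi}, estimate~\eqref{eq:psiEctraTerm}, and the computational bound of \cite{RR1996}; the real work is the bookkeeping of constants and the partition of the parameter region. The point needing care is that $\varphi(m)$ is not monotone in $m$, so the implication ``$m\geq 19\Rightarrow\varphi(m)\geq 8$'' has to be justified by listing the moduli with $\varphi(m)\in\{2,4,6\}$; and one must verify that the explicit range $[1865,10^{10}]$ and the range $x>10^{10}$ handled via Theorem~\ref{psi} together cover every admissible $x$ for $3\leq m\leq 18$. Should the numerical margin in the subcase $x>10^{10}$ feel uncomfortably thin, one can instead push the explicit range up to $10^{11}$ using \cite[Theorem~1.9]{BGOR2018}, exactly as in the proof of Lemma~\ref{corollary:theta}.
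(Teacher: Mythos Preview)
Your proof is correct. It follows the same overall strategy as the paper --- combine Theorem~\ref{psi} with estimate~\eqref{eq:psiEctraTerm} for large~$x$, and invoke an explicit computational bound (\cite[Table~2]{RR1996}) for small~$x$ --- but partitions the parameter range more efficiently. The paper splits into three ranges $3\le m\le 72$, $73\le m\le 432$, $m\ge 433$, handling the latter two via Corollary~\ref{corollary:varLower} (so ultimately via Lemma~\ref{lemma:Var}) and~\cite[Theorem~1.9]{BGOR2018}; its bottleneck is the case $m\le 72$, $x>10^{10}$, where the coefficient comes out right at~$0.092$. Your two-way split at $m=18/19$, using the elementary observation that $\varphi(m)\ge 8$ for $m\ge 19$, dispenses with both Corollary~\ref{corollary:varLower} and the BGOR reference, and in fact yields a coefficient around~$0.077$ in every subcase --- so the margin you worry about at the end is actually comfortable, and your argument would support a sharper constant than the stated~$0.092$.
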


\begin{proof}
The claim follows similarly as the results in Lemma \ref{lemma:estPi} and Lemma \ref{corollary:theta}.

Let us first consider the case $3 \leq m \leq 432$. Suppose that $m$ belongs to the even shorter range $3 \leq m \leq 72$ and that $1865 \leq x \leq 10^{10}$.
Then by \cite[Table 2]{RR1996} we have
\begin{equation*}
    \left|\theta(x;m,a)-\frac{x}{\varphi(m)}\right|<1.817557\sqrt{x}\leq \frac{1.817557}{\log^2 1865}\sqrt{x}\log^2 x<0.033\sqrt{x}\log^2 x.
\end{equation*}
For the case $x > 10^{10}$, we use Theorem \ref{psi} and keeping in mind $m \leq 72$ we get
\begin{align}
    &\left|\theta(x;m,a)-\frac{x}{\varphi(m)}\right| \nonumber\\
    & \quad\leq \sqrt{x}\log^2 x\left(\frac{1}{8\pi}+\left(\frac{1}{2\pi \log(10^{10})}+\frac{2}{\log^2(10^{10})}\right)\log 72+\frac{3.4262}{\log^2(10^{10})}\right) \label{eq:replaceXm}\\
   &\quad <0.092\sqrt{x}\log^2 x. \nonumber
\end{align}
Further, replacing lower bound $x \geq 73$ with $x\geq m^{\varphi(m)} \geq  73^{73^{0.7}}$ (see Corollary \ref{corollary:varLower}) in estimates \eqref{est:theta73145Third} and \eqref{eq:replaceXm} and also $72$ with $432$ in estimate \eqref{eq:replaceXm}, this coefficient can be used also in the case $73 \leq m \leq 432$.

Now we move on to the case $m \geq 433$. Using Theorem \ref{psi}, estimate \eqref{eq:psiEctraTerm}, the assumption $x \geq m^{\varphi(m)}$ and Corollary \ref{corollary:varLower} we have
\begin{multline*}
    \left|\theta(x;m,a)-\frac{x}{\varphi(m)}\right|<\sqrt{x}\log^2 x\left(\frac{1}{8\pi}+\frac{1}{2\pi \cdot 433^{0.7}}+\frac{2}{433^{1.4}\log 433}+\frac{3.4262}{433^{1.4}\log^2 433}\right) \\
    <0.043\sqrt{x}\log^2 x.
\end{multline*}
Hence, we can use the bound $0.092$ in every case.
\end{proof}

\subsection{Estimates for some functions depending on the functions $\pi(x;m,a)$ and $\theta(x;m,a)$}
\label{sec:EstDepend}

In this section, we prove estimates for some functions closely related to the functions $\pi(x;m,a)$ and $\theta(x;m,a)$. These estimates are used to prove the main results. 

First we prove results concerning to sum running over primes. 

\begin{lemma}
\label{lemma:estimateSumPNLog}
Let $m, a$ and $x$ be as in Theorem \ref{psi}. Assume also that $m\geq 3$ and $x \geq \max\{m^{\varphi(m)}, 1865\}$, RH holds and that the $m$th cyclotomic Dedekind zeta function satisfies the GRH. Then we have 
\begin{equation*}
      \left|-\sum_{\substack{ p \leq x \\p \equiv a \pmod m}} \frac{\log p}{p-1}+\frac{\log x}{\varphi(m)}\right|<6.550+\log m.
\end{equation*}
\end{lemma}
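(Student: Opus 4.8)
The plan is to reduce the sum to $\theta(x;m,a)$ by Abel summation and then feed in the GRH‑based estimates of Lemmas~\ref{corollary:theta} and~\ref{corollary:theta2}. First I would split off a convergent piece using $\frac1{p-1}=\frac1p+\frac1{p(p-1)}$:
\begin{equation*}
    \sum_{\substack{p\leq x \\ p\equiv a\pmod m}}\frac{\log p}{p-1}
    =\sum_{\substack{p\leq x \\ p\equiv a\pmod m}}\frac{\log p}{p}
    +\sum_{\substack{p\leq x \\ p\equiv a\pmod m}}\frac{\log p}{p(p-1)},
\end{equation*}
where the last sum lies in $[0,B)$ with $B:=\sum_{p}\frac{\log p}{p(p-1)}$ an absolute constant (bounded, say, by comparison with $\sum_{n\geq2}\frac{\log n}{n(n-1)}$), so it only affects the additive constant. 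It then suffices to control $A(x):=\sum_{p\leq x,\ p\equiv a\ (m)}\frac{\log p}{p}$ against $\frac{\log x}{\varphi(m)}$. Setting $E(t):=\theta(t;m,a)-\frac{t}{\varphi(m)}$, Abel summation (with $\theta(1;m,a)=0$) gives
\begin{equation*}
    A(x)=\frac{\theta(x;m,a)}{x}+\int_1^x\frac{\theta(t;m,a)}{t^2}\,dt
    =\frac1{\varphi(m)}+\frac{E(x)}{x}+\frac{\log x}{\varphi(m)}+\int_1^x\frac{E(t)}{t^2}\,dt,
\end{equation*}
the term $\frac1{\varphi(m)}\int_1^x\frac{dt}{t}$ producing exactly the expected main term $\frac{\log x}{\varphi(m)}$. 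Consequently
\begin{equation*}
    \left|-\sum_{\substack{p\leq x \\ p\equiv a\pmod m}}\frac{\log p}{p-1}+\frac{\log x}{\varphi(m)}\right|
    \leq \frac1{\varphi(m)}+\frac{|E(x)|}{x}+\left|\int_1^x\frac{E(t)}{t^2}\,dt\right|+B .
\end{equation*}

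Next I would bound the right‑hand side. Here $\frac1{\varphi(m)}\leq\frac12$, and $\frac{|E(x)|}{x}<\frac{0.092(\log x)^2}{\sqrt x}$, which by Lemma~\ref{corollary:theta2} (and monotonicity for $x\geq1865$) is at most an absolute constant. For the integral I would split at $t_1:=\max\{\sqrt{1865},m\}$, noting $t_1\leq\max\{1865,m^{\varphi(m)}\}\leq x$. On $[t_1,x]$ Lemma~\ref{corollary:theta} gives $|E(t)|\leq a(m)\sqrt t(\log t)^2$, so
\begin{equation*}
    \left|\int_{t_1}^x\frac{E(t)}{t^2}\,dt\right|\leq a(m)\int_{t_1}^\infty\frac{(\log t)^2}{t^{3/2}}\,dt
    =a(m)\,\frac{2(\log t_1)^2+8\log t_1+16}{\sqrt{t_1}},
\end{equation*}
which, since the fraction is decreasing in $t_1\geq\sqrt{1865}$ and $a(m)\leq0.399$, is bounded by an absolute constant (one may split once more at $\max\{m^{\varphi(m)},1865\}$ and use the sharper Lemma~\ref{corollary:theta2} there for a better value). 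On $[1,t_1]$ I would use only $0\leq\theta(t;m,a)\leq\theta(t)$ together with a standard explicit bound for $\theta(t)$ --- under RH one has $\theta(t)\leq t+\frac1{8\pi}\sqrt t(\log t)^2$, with $\theta(t)<t$ in the remaining small range --- giving $|E(t)|\leq t+\frac1{8\pi}\sqrt t(\log t)^2$ and hence
\begin{equation*}
    \int_1^{t_1}\frac{|E(t)|}{t^2}\,dt\leq\log t_1+\frac1{8\pi}\int_1^\infty\frac{(\log t)^2}{t^{3/2}}\,dt=\log t_1+\frac2\pi .
\end{equation*}

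Finally I would collect the constants. When $m\geq44$ we have $t_1=m$, so the bound becomes $\log m$ plus the sum of the various absolute constants above, and the computation to be checked is simply that this sum stays below $6.550$; the $\log m$ in the statement is precisely the $\log t_1$ coming from the crude estimate $\theta(t;m,a)\leq\theta(t)$ on $[1,m]$. When $m\leq43$ we have $t_1=\sqrt{1865}$, so $\log t_1$ is itself an absolute constant and the whole right‑hand side is an absolute constant, which one checks is $<6.550+\log 3\leq6.550+\log m$; for this finite range one may, if the margin turns out to be tight, replace the crude estimate on $[1,t_1]$ by the exact value of $\sum_{p\leq\sqrt{1865},\ p\equiv a\ (m)}\frac{\log p}{p}$ computed directly.

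The hard part is the initial segment $[1,t_1]$ and the resulting constant bookkeeping: estimating $\theta(t;m,a)$ by $\theta(t)\sim t$ there is deliberately lossy (the true size is $\sim t/\varphi(m)$) and is what injects the $\log m$ into the final bound, so one must verify that the accompanying constant --- together with the GRH tail contribution and the small boundary terms $\frac1{\varphi(m)}$, $\frac{|E(x)|}{x}$, $B$ --- really sums to below $6.550$. An alternative that avoids the $\log m$ altogether is the elementary observation that a fixed residue class modulo $m$ contains at most one prime below $m$, so that $\theta(t;m,a)\leq\log t$ for $1\leq t<m$, which keeps the $[1,t_1]$ contribution absolutely bounded; but the present weaker form with $\log m$ already suffices, so the only real effort is the numerics.
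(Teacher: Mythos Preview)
Your approach is essentially the same as the paper's: Abel summation against $\theta(t;m,a)$, splitting the integral at $M=\max\{m,\sqrt{1865}\}$, invoking Lemma~\ref{corollary:theta} on $[M,x]$ and Lemma~\ref{corollary:theta2} for the boundary term, and bounding the initial segment by the crude $\theta(t;m,a)\leq\theta(t)<t+\frac{1}{8\pi}\sqrt t\log^2 t$ under RH. The two cosmetic differences are that the paper applies Abel summation directly to $\sum\frac{\log p}{p-1}$ (obtaining $\int\theta/(y-1)^2$) rather than first peeling off $\sum\frac{\log p}{p(p-1)}$, and that the paper tracks signs rather than using the triangle inequality --- it observes that all the pieces inside the absolute value are negative, so positive contributions may simply be discarded before taking absolute values. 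This sign bookkeeping is what lets the paper land comfortably below $6.550$; your cruder triangle-inequality version still works but leaves noticeably less margin (your constants for $m\leq 43$ sum to roughly $7.2$ against a target of $6.55+\log 3\approx 7.65$), so if you keep your variant you should actually carry out the numerics rather than leave them as ``to be checked''. Your closing remark that a reduced residue class contains at most one prime below $m$, giving $\theta(t;m,a)\leq\log t$ on $[1,m)$, is a genuine sharpening the paper does not use.
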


\begin{proof}
By Abel's summation formula we can write
\begin{align}
    -\sum_{\substack{p \leq x \\p \equiv a \pmod m}}\frac{\log p}{p-1}&=-\frac{\theta(x;m,a)}{x-1}-\int_{2}^x \frac{\theta(y;m,a)}{(y-1)^2} \,dy \nonumber \\
    &= -\frac{\theta(x;m,a)}{x-1}-\int_{2}^{M} \frac{\theta(y;m,a)}{(y-1)^2} \,dy-\int_{M}^x \frac{\theta(y;m,a)}{(y-1)^2} \,dy, \label{eq:BigEst}
\end{align}
where $M=\max\{m, \sqrt{1865}\}$. Note that hence $x \geq M$.
We consider each of the previous three terms on the last three lines of the equality. 

First, by Lemma \ref{corollary:theta2}, in the first term we have
\begin{equation*}
-\left(\frac{x}{\varphi(m)}+0.092\sqrt{x}\log^2 x\right)< -\theta(x;m,a) < -\left(\frac{x}{\varphi(m)}-0.092\sqrt{x}\log^2 x\right).
\end{equation*}
Please notice that since all of the terms on the last two lines of computations \eqref{eq:BigEst} are non-positive, absolute value of the largest error term depends on the case 
$
-0.092\sqrt{x}\log^2 x/(x-1).
$

Similarly, now using Lemma \ref{corollary:theta}, the third term on the right-hand side of equality \eqref{eq:BigEst} is between the values
\begin{equation*}
-\int_{M}^x \frac{y}{(y-1)^2\cdot \varphi(m)} \,dy \pm \int_{M}^x \frac{a(m)\sqrt{y}\log^2 y}{(y-1)^2} \,dy
\end{equation*}
and we are interested in the case where there is a minus sign in front of the second integral.
The first integral is
\begin{equation*}
    \int_{M}^x \frac{y}{(y-1)^2} \,dy =\left[\frac{1}{(1-y)}+\log(y-1)\right]_M^x=\log(x-1)-\log(M-1)+\frac{1}{1-x}-\frac{1}{1-M}
\end{equation*}
and since $M \geq \sqrt{1865}$, the other one is
\begin{align*}
    \left| \int_{M}^x \frac{\sqrt{y}\log^2 y}{(y-1)^2} \,dy\right| &< \frac{2.096}{2}\int_{M}^x \frac{\log^2 y}{y^{1.5}} \,dy \\ &=2.2096\left(\frac{\log^2 M+4\log M+8}{\sqrt{M}}-\frac{\log^2 x+4\log x+8}{\sqrt{x}}\right).
\end{align*}

Let us now move on to the second term in the right-hand side of equation \eqref{eq:BigEst}. Under Riemann Hypothesis, by \cite[Theorem 10, (6.5)]{S1976} we have
\begin{equation}
\label{eq:y}
   0\leq \theta(y;m,a) \leq \sum_{p \leq y} \log p<y+\frac{\sqrt{y}\log^2 y}{8\pi}
\end{equation}
for all $y>0$. Hence, in order to estimate the second term in \eqref{eq:BigEst}, it is sufficient to estimate the integral
\begin{multline*}
    \int_2^M \frac{y+\sqrt{y}\log^2 y/(8\pi)}{(y-1)^2} \, dy\leq \int_2^M \left(\frac{y}{(y-1)^2}+\frac{4\log^2 y}{8\pi y^{1.5}}\right) \, dy \\
    =\left[\frac{1}{1-y}+\log(y-1)-\frac{1}{\pi}\left(\frac{\log^2 y+4\log y+8}{\sqrt{y}}\right)\right]_{2}^M \\
    =-\frac{1}{M-1}+\log(M-1)+1-\frac{1}{\pi}\left(\frac{\log^2 M+4\log M+8}{\sqrt{M}}-\frac{\log^2 2+4\log 2+8}{\sqrt{2}}\right).
\end{multline*}
Thus we have estimated all of the terms in \eqref{eq:BigEst}.

Putting everything together we get 
\begin{align}
   & \left|-\sum_{\substack{ p \leq x \\p \equiv a \pmod m}} \frac{\log p}{p-1}+\frac{\log x}{\varphi(m)}\right|<\left|-\frac{0.092\sqrt{x}\log^2 x}{x-1}+\frac{1}{M-1}-\log(M-1)\right. \nonumber \\
       &\quad \left. -1-\left(\frac{x-1}{x-1}+\log(x-1)-\log x-\log(M-1)-\frac{1}{1-M}\right)\frac{1}{\varphi(m)} \right. \nonumber\\
        &\quad\left. +\frac{1}{\pi}\left(\frac{\log^2 M+4\log M+8}{\sqrt{M}}-\frac{\log^2 2+4\log 2+8}{\sqrt{2}}\right) \right. \nonumber\\
       &\quad \left. -2.096a(m)\left(\frac{\log^2 M+4\log M+8}{\sqrt{M}}-\frac{\log^2 x+4\log x+8}{\sqrt{x}}\right) \right| \nonumber\\
        &\quad<\left|-\frac{0.092\sqrt{x}\log^2 x}{x-1}+\frac{\varphi(m)-1}{(M-1)\varphi(m)}+\log(M-1)\cdot\frac{1-\varphi(m)}{\varphi(m)}-1-\frac{1}{\varphi(m)}\right. \nonumber \\
        &\quad\quad\left. -\frac{\log(x-1)-\log x}{\varphi(m)}+\left(\frac{\log^2 M+4\log M+8}{\sqrt{M}}\right)\left(\frac{1}{\pi}-2.096a(m)\right) \right. \label{eq:absoluteValue}\\
        &\quad\quad\left. -\frac{1}{\pi}\left(\frac{\log^2 2+4\log 2+8}{\sqrt{2}}\right)+2.096a(m)\left(\frac{\log^2 x+4\log x+8}{\sqrt{x}}\right) \right|. \nonumber
\end{align}
Since $
    \frac{1}{M-1}-\log(M-1)<0,
$ 
$-1-\log(x-1)+\log x <0$ for all $x$ under consideration, 
$\varphi(m) \geq 2$, the function
$
    \left(\log^2 y+4\log y+8\right)/\sqrt{y}
$
is decreasing for all $y>0$ and $a(m) \geq 0.129>0$, we have
\begin{align*}
    &\frac{\varphi(m)-1}{\varphi(m)}\left(\frac{1}{M-1}-\log(M-1)\right)<0, \quad -1+\frac{-\log(x-1)+\log x}{\varphi(m)}<0 \\
    & \frac{1}{\pi}\left(\frac{\log^2 M+4\log M+8}{\sqrt{M}}-\frac{\log^2 2+4\log 2+8}{\sqrt{2}}\right)<0 \\
    &2.096a(m)\left(\frac{\log^2 x+4\log x+8}{\sqrt{x}}-\frac{\log^2 M+4\log M+8}{\sqrt{M}}\right)<0.
\end{align*}
Hence, the sum inside the absolute values in \eqref{eq:absoluteValue} is negative. Thus, when the positive terms are removed, with an exception of the term $2.096a(m)-1/\pi$ that can be positive or negative, the absolute value increases. By these, the right-hand side of \eqref{eq:absoluteValue} is
\begin{align}
        <\frac{0.092\sqrt{x}\log^2 x}{x-1} +\log M +\frac{3}{2}+\frac{1}{\pi}\left(\frac{\log^2 2+4\log 2+8}{\sqrt{2}}\right) \nonumber\\
        +\left(2.096a(m)-\frac{1}{\pi}\right)\left(\frac{\log^2 M+4\log M+8}{\sqrt{M}}\right). \label{eq:EstMainTerm} 
\end{align}
We are almost ready, we still want to simplify the last line in estimate \eqref{eq:EstMainTerm} a little bit. Let us consider the last line in \eqref{eq:EstMainTerm} in different cases depending on the size of $m$.
In each of the cases we apply the facts that the functions
    $\left(\log^2 y+4\log y+8\right)/\sqrt{y}$ and $\sqrt{y}\log^2 y/(y-1)$
are decreasing for all $y>0$. 

In the case $3 \leq m \leq 432$, we apply the facts $2.096a(m)-1/\pi<0$, $M = \max\{m,\sqrt{1865}\}$ and $x \geq \max\{1865, m^2\}$. This gives upper bound $3.883+\log\sqrt{1865}$ when $m \leq 43$ and $3.994+\log m$ when $44\leq m\leq 432$. Further, in the case $m \geq 433$ we use $M \geq m\geq 433$ and that by Corollary \ref{corollary:varLower} and assumptions we have $x \geq m^{\varphi(m)} \geq 433^{433^{0.7}}$. We get an upper bound $5.754+\log m$. Hence, an upper bound $6.550+\log m$ holds in every case. 
\end{proof}

\begin{remark}
In the previous proof, we could combine estimates coming from Theorem \ref{psi}, inequality \eqref{eq:psiEctraTerm} and the fact that $x \geq 2$, to obtain a bound
\begin{equation}
\label{eq:ySecond}
    \frac{y}{\varphi(m)}+7.171\sqrt{y}\log^2 y+\sqrt{x}\log(m)\left(\frac{\log y}{2\pi}+2\right)
\end{equation}
and use it instead of the right-hand side of estimate \eqref{eq:y}. However, the number $y$ must be very large (e.g. clearly large than $10^{6}$) before estimate \eqref{eq:ySecond} is better than estimate on the right-hand side of \eqref{eq:y}. Since in \eqref{eq:y} we want to also consider small values $y$ and the upper bound in this consideration is $\max\{\sqrt{1865},m\}$, we use the right-hand side of estimate \eqref{eq:y}.
\end{remark}

\begin{lemma}
\label{lemma:Log}
Let $a$ be defined as in Theorem \ref{psi} and $m$ and $x$ as in Lemma \ref{lemma:estimateSumPNLog}. Assume also RH and that the $m$th cyclotomic Dedekind zeta function satisfies the GRH. Further, let $n \geq x$ be a positive integer.
Then we have
\begin{multline*}
    \left|\log\left(\prod_{\substack{p \leq x \\ p \equiv a \pmod m}} \left|n!\right|_p\right)+\frac{n\log x}{\varphi(m)}\right|< (6.550+\log m)n+\frac{x\log n}{\varphi(m)\log x}+\frac{x}{\varphi(m)} \\
    +\left(2.539+\frac{5.440}{\varphi(m)}\right)\frac{n}{\log n}.
\end{multline*}
\end{lemma}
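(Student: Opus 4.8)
The plan is to express $\log\bigl(\prod_{p\le x,\ p\equiv a}|n!|_p\bigr)$ via Legendre's formula and then to perform Abel summation against the prime-counting data from Section~\ref{sec:piTheta}. Concretely, for a prime $p$ the exponent of $p$ in $n!$ is $\sum_{j\ge 1}\floor{n/p^j}$, so
\[
  -\log\Bigl(\prod_{\substack{p\le x\\ p\equiv a\,(m)}}|n!|_p\Bigr)
  =\sum_{\substack{p\le x\\ p\equiv a\,(m)}}\Bigl(\sum_{j\ge 1}\floor{n/p^j}\Bigr)\log p.
\]
The first step is to replace $\floor{n/p^j}$ by $n/p^j$ at the cost of an error; summing the geometric series $\sum_{j\ge1}p^{-j}=1/(p-1)$ gives the main term $n\sum_{p\le x,\ p\equiv a}\frac{\log p}{p-1}$, which is exactly the quantity estimated in Lemma~\ref{lemma:estimateSumPNLog}, yielding $\bigl|\,n\sum \frac{\log p}{p-1}-\frac{n\log x}{\varphi(m)}\bigr|<(6.550+\log m)n$. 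That accounts for the $(6.550+\log m)n$ term and the $-\tfrac{n\log x}{\varphi(m)}$ that sits inside the absolute value on the left.

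The remaining steps control the two kinds of error introduced. The floor-versus-fraction discrepancy is at most $1$ per term $p^j\le n$, so it is bounded by $\sum_{p^j\le n}\log p=\psi(n)$ (over all primes, not just those $\equiv a$), and under RH the classical bound $\psi(n)<n+c\sqrt n\log^2 n$ — or more simply $\psi(n)=O(n)$ — would be too crude; instead I restrict the floor error to primes $p\le x$ only, where it is at most $\sum_{\substack{p\le x\\ p\equiv a}}\log p\cdot(\text{number of }j\text{ with }p^j\le n)\le \theta(x;m,a)\cdot\frac{\log n}{\log 2}$ after discarding, or better, bound the tail $\sum_{j:\,p^j>n}$ contributes nothing since $\floor{n/p^j}=0$ there, and the count of relevant $j$ for fixed $p$ is $\le \log n/\log p$. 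Summing $\sum_{\substack{p\le x\\p\equiv a}}\frac{\log n}{\log p}\cdot\log p=\log n\cdot\pi(x;m,a)$ and invoking Lemma~\ref{lemma:estPi} with the bound $\mathrm{Li}(x)\le x/\log x + \text{lower order}$ gives a term of size $\frac{x\log n}{\varphi(m)\log x}$ plus $\sqrt x\log x\log n$-type contributions; since $n\ge x\ge\max\{m^{\varphi(m)},1865\}$, those square-root terms are absorbed into $(2.539+5.440/\varphi(m))n/\log n$ after comparing $\sqrt x\log x\log n\le \sqrt n\,(\log n)^2\ll n/\log n$ for $n$ this large. The secondary main term $\sum_{p\le x,\ p\equiv a}\frac{\log p}{p-1}$ arising from $n/p^j$ summed exactly also produces, beyond the $n\log x/\varphi(m)$ piece, a lower-order $x/\varphi(m)$ contribution from the $\mathrm{Li}$-to-$x/\log x$ passage; this is the source of the standalone $x/\varphi(m)$ term on the right-hand side.

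The main obstacle, and where the bulk of the careful work lies, is bookkeeping the numerical constants so that every error stream fits under the stated coefficients $6.550+\log m$, $\frac{1}{\varphi(m)\log x}$, $\frac{1}{\varphi(m)}$, and $2.539+\frac{5.440}{\varphi(m)}$ simultaneously: one must track how the prime-power terms $j\ge 2$ (which contribute to $1/(p-1)$ but only finitely many to the floor error) are split between the $\psi$-type and $\theta$-type estimates, use $\psi(y;m,a)-\theta(y;m,a)<1.4262\sqrt y$ as in \eqref{eq:psiEctraTerm}, and exploit $n\ge x\ge m^{\varphi(m)}$ to convert every $\sqrt x\,(\log x)^2$ or $\sqrt n\,(\log n)^2$ remainder into a fraction of $n/\log n$. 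The constants $2.539$ and $5.440$ are recognizable as coming directly from the $a(m)$-type bound $0.092$ and the $3/\varphi(m)$ in Lemma~\ref{lemma:estPi} together with the integral $\int^x \sqrt y(\log y)^2/(y-1)^2\,dy$ evaluated in Lemma~\ref{lemma:estimateSumPNLog}, so the proof reuses those computations almost verbatim, only multiplied through by $n$ and combined with the $\pi(x;m,a)\log n$ contribution.
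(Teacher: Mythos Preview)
Your approach is essentially the same as the paper's: both start from Legendre's formula (which the paper packages as the two-sided bound $p^{-n/(p-1)}\le |n!|_p\le p^{-n/(p-1)+\log n/\log p+1}$), feed the main sum $n\sum\frac{\log p}{p-1}$ into Lemma~\ref{lemma:estimateSumPNLog}, and control the remainder via Lemmas~\ref{lemma:estPi} and~\ref{corollary:theta2} together with $\mathrm{Li}(x)<x/\log x+3x/(2\log^2 x)$. One correction to your bookkeeping: the standalone $x/\varphi(m)$ term does \emph{not} arise from the $\mathrm{Li}$-to-$x/\log x$ passage but from $\theta(x;m,a)\approx x/\varphi(m)$, i.e.\ from the ``$+1$'' in the exponent bound (equivalently the extra $\log p$ per prime) that you omitted when bounding the floor discrepancy by $\log n\cdot\pi(x;m,a)$ alone; once that term is restored the constants line up exactly as in the paper.
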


\begin{proof}
We follow the same steps as in the proof of Lemma 9 in \cite{EHMS2019}. First, we modify the first term on the left-hand side of the claim and then derive the right-hand side using Lemma \ref{corollary:theta}. 

First, we write the first term on the left-hand side as
\begin{equation}
\label{eq:asSum}
    \log\left(\prod_{\substack{p \leq x \\p \equiv a \pmod m}} \left|n!\right|_p\right)=\sum_{\substack{p \leq x \\ p \equiv a \pmod m}} \log |n!|_p.
\end{equation}
As in \cite[Proof of Lemma 9]{EHMS2019}, by a well known fact we have
\begin{equation*}
 p^{-\frac{n}{p-1}} \leq \left|n!\right|_p \leq p^{-\frac{n}{p-1}+\frac{\log n}{\log p}+1}
\end{equation*}
and hence we can estimate the right-hand side of equation \eqref{eq:asSum} with sums
\begin{equation}
\label{eq:sumNThree}
    -\sum_{\substack{ p \leq x \\p \equiv a \pmod m}} \frac{n}{p-1}\log p \quad\text{and} \sum_{\substack{ p \leq x \\p \equiv a \pmod m}} \left(\log n+\log p\right).
\end{equation}
We consider these sums separately.

First we notice that the first sum in \eqref{eq:sumNThree} can be estimated with Lemma \ref{lemma:estimateSumPNLog}. Hence we can move to the rest of the terms in \eqref{eq:sumNThree}.

By Lemma \ref{lemma:estPi} and Lemma \ref{corollary:theta2} we have
\begin{multline}
\label{eq:logSumSecondTerm}
    \left|\sum_{\substack{ p \leq x \\p \equiv a \pmod m}} \left(\log n+\log p\right)\right|<\log n\left(\frac{\textrm{Li}(x)}{\varphi(m)}+\frac{\sqrt{x}\log x}{8\pi}+ \sqrt{x}\left(\frac{\log m}{2\pi}+\frac{3}{\varphi(m)}+0.341\right) \right) \\
    +\frac{x}{\varphi(m)}+0.092\sqrt{x}\log^2 x.
\end{multline}
Further, by \cite[Lemma 5.9]{BGOR2018} we have
\begin{equation}
\label{eq:li}
    \textrm{Li}(x)<\frac{x}{\log x}+\frac{3x}{2\log^2 x}
\end{equation}
for all $x \geq 1865$. Hence we have estimated the second sum in \eqref{eq:sumNThree}. 

Let us now combine our estimates. Using Lemma \ref{lemma:estimateSumPNLog} and estimates \eqref{eq:logSumSecondTerm} and \eqref{eq:li} for the sums in \eqref{eq:sumNThree} we have
\begin{align*}
    &\left|\log\left(\prod_{\substack{p \leq x \\ p \equiv a \pmod m}} \left|n!\right|_p\right)+\frac{n\log x}{\varphi(m)}\right|<(6.550+\log m)n+\frac{x\log n}{\varphi(m)\log x}+\frac{x}{\varphi(m)}+\frac{3x\log n}{2\varphi(m)\log^2 x}  \\
    &\quad+\frac{\sqrt{x}\log x \cdot \log n}{8\pi}+0.092\sqrt{x}\log^2 x+\sqrt{x}\log n\left(\frac{\log m}{2\pi}+\frac{3}{\varphi(m)}+0.341\right).
\end{align*}
Since $x \leq n$ and the function $x/\log^2 x$ is increasing for all $x$ under consideration, the right-hand side is
\begin{multline*}
   \leq (6.550+\log m)n+\frac{x\log n}{\varphi(m)\log x}+\frac{x}{\varphi(m)} \\
    +\frac{n}{\log n}\left(\frac{3}{2\varphi(m)}+\left(\frac{1}{8\pi}+\frac{1}{4\pi}+0.092\right)\frac{\log^3 n}{\sqrt{n}}+\left(\frac{3}{\varphi(m)}+0.341\right)\frac{\log^2 n}{\sqrt{n}}\right).
\end{multline*}
Since functions $\log^3 n/\sqrt{n}$, $\log^2 n/\sqrt{n}$ and $\log^2 n/n$ are decreasing for all $n \ge 1865$, the claim follows.
\end{proof}

\begin{remark}
\label{remark:Log}
Since $|q|_p=1$ for all primes $p>q$, for all $x>n$ we have
\begin{equation*}
   \log\left(\prod_{\substack{p \in \mathbb{P} \\ p \equiv a \pmod m}} \left|n!\right|_p\right)= \log\left(\prod_{\substack{p \leq x \\ p \equiv a \pmod m}} \left|n!\right|_p\right)=\log\left(\prod_{\substack{p \leq n \\ p \equiv a \pmod m}} \left|n!\right|_p\right).
\end{equation*}
\end{remark}

\subsection{Non-empty sets}
\label{sec:Existence}
In this section, we prove that certain sets are non-empty. These results are applied in Sections \ref{sec:FirstLower} and \ref{sec:lower2}. First we mention a useful lemma and then we prove the result.

\begin{lemma}\cite[Theorem 1.1, Table 1]{DGM2019}
\label{lemma:primeExists}
Assume GRH, let $m \geq 3$ and $a$ be integers such that $(a,m)=1$. Further, assume that 
$
    h \geq \varphi(m)\left(\log x/2+\log m+12\right)\sqrt{x}
$
and $x \geq \left(23\varphi(m)\log m \right)^2$. Then there is a prime $p$ which is congruent to $a$ modulo $m$ with $\left|p-x\right|<h$.
\end{lemma}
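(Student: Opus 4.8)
The statement is a ``prime in a short interval in an arithmetic progression'' result, and under GRH the natural engine is the explicit formula for \(\psi(x;m,a)\). The plan is to reduce the existence of a prime \(p\equiv a \pmod m\) with \(|p-x|<h\) to a positivity statement: it suffices to prove \(\theta(x+h;m,a)-\theta(x-h;m,a)>0\), since a positive jump of \(\theta\) on \((x-h,x+h)\) forces an honest prime (not merely a prime power) in that interval and in the class \(a\bmod m\). Writing \(\theta=\psi-(\psi-\theta)\) and using the standard prime-power bound \eqref{eq:psiEctraTerm}, which gives \((\psi-\theta)(x+h;m,a)-(\psi-\theta)(x-h;m,a)\le 1.4262\sqrt{x+h}\), this reduces in turn to showing
\[
\psi(x+h;m,a)-\psi(x-h;m,a)\;>\;1.4262\sqrt{x+h}.
\]

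For the left-hand side I would apply the truncated explicit formula for \(\psi(x;m,a)\) under GRH, but evaluated on the \emph{interval} rather than at a single point, which is the standard route to keeping only one power of \(\log x\). For each Dirichlet character \(\chi\bmod m\) and each non-trivial zero \(\rho\) of \(L(s,\chi)\) one has \(\bigl((x+h)^{\rho}-(x-h)^{\rho}\bigr)/\rho=\int_{x-h}^{x+h}t^{\rho-1}\,dt\), which under GRH (\(\Re\rho=\tfrac12\)) is at most \(2h(x-h)^{-1/2}\) in modulus and also at most \(2\sqrt{x+h}/|\rho|\); combining the two bounds, summing over zeros up to a height \(T\) via the zero-counting estimate for \(L(s,\chi)\) (whose conductor produces the \(\log m\) terms), adding the usual truncation error \(\ll (x\log^{2}(mx))/T\) per character, and dividing by \(\varphi(m)\) yields
\[
\psi(x+h;m,a)-\psi(x-h;m,a)\;\ge\;\frac{2h}{\varphi(m)}-E(x,h,m),
\]
where, after the optimal choice \(T\asymp\sqrt{x}/\varphi(m)\), the error \(E(x,h,m)\) has leading behaviour of order \(\sqrt{x}\,\log x+\sqrt{x}\log m\). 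A shorter but lossy alternative is to invoke Theorem \ref{psi} directly at the two endpoints \(x\pm h\); this costs an extra \(\log\) and would prove only a weaker version of the lemma.

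It then remains to verify that the hypotheses force \(2h/\varphi(m)>E(x,h,m)+1.4262\sqrt{x+h}\). The condition \(x\ge(23\varphi(m)\log m)^{2}\) gives \(\sqrt{x}\ge 23\varphi(m)\log m\), which keeps \(\sqrt{x\pm h}\) and \(\log(x\pm h)\) within controlled multiples of \(\sqrt{x}\) and \(\log x\) in the relevant range and makes the \(\varphi(m)\log m\)-type terms genuinely lower order; dividing the target inequality by \(\varphi(m)\sqrt{x}\) then reduces it to \(2h/(\varphi(m)\sqrt{x})>\tfrac12\log x+\log m+C\) for a bounded \(C\), and the ``\(12\)'' in the hypothesis is exactly that constant (it must absorb the \(1.4262\) of \eqref{eq:psiEctraTerm}, the \(1/(4\pi)\)- and \(1/(2\pi)\)-type coefficients, and the truncation contribution after optimizing \(T\)). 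Small \(m\) and small \(x\) are handled by direct numerical verification, as with the other explicit lemmas in the paper. The real work — and the only obstacle — is this constant bookkeeping: tracking \(\sqrt{x\pm h}\) versus \(\sqrt{x}\) and \(\log(x\pm h)\) versus \(\log x\), choosing \(T\) (or a suitable smoothing kernel) so that exactly one power of \(\log x\) survives in \(E\), and checking that the resulting threshold is no larger than \(\varphi(m)\bigl(\tfrac12\log x+\log m+12\bigr)\sqrt{x}\); none of the ingredients is deep, but the constants ``\(23\)'' and ``\(12\)'' pin down how sharp each estimate must be.
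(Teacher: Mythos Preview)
The paper does not prove Lemma~\ref{lemma:primeExists} at all: it is quoted verbatim as \cite[Theorem 1.1, Table 1]{DGM2019} and used as a black box in the very next lemma, with no argument given. So there is no ``paper's own proof'' to compare your proposal against.

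That said, your sketch is the right one and is essentially how such results are proved in the source you would be reconstructing. The key point you identify correctly is that applying the explicit formula to the \emph{difference} \(\psi(x+h;m,a)-\psi(x-h;m,a)\) via \(\int_{x-h}^{x+h}t^{\rho-1}\,dt\), rather than bounding \(\psi\) at each endpoint separately, is what buys the single power of \(\log x\) in the threshold for \(h\); invoking Theorem~\ref{psi} twice would indeed give only \(h\gtrsim\sqrt{x}\log^{2}x\) and would not recover the stated constants. Your reduction through \(\theta\) and the bound \eqref{eq:psiEctraTerm} is also standard and correct. The only caveat is that you have not actually carried out the constant bookkeeping that produces the specific values \(12\) and \(23\), and that is genuinely the entire content of the cited theorem; your proposal is an accurate outline, not a proof, and the honest move here is simply to cite \cite{DGM2019} as the paper does.
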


\begin{lemma}
\label{lemma:exists}
Let us use the same definitions and assumptions as in Theorem \ref{thm:lowerBound}, let $R_1$ correspond to $R$ in Theorem \ref{thm:dmGene} and let $x_1 \geq m^{\varphi(m)}$. Then there is at least one prime in the set $R_1\cap [2,x_1]$.

Further, let us use the same definitions and assumptions as in Theorem \ref{thm:lowerBoundMore}, let $R_2$ correspond to $R$ in Theorem \ref{thm:lowerBoundMore} and let $\log x_2 \geq m^{\varphi(m)}$. Then there is at least one prime in the set $R_2 \cap (\log x_2, x_2]$.
\end{lemma}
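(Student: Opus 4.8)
The plan is to reduce both assertions to Lemma \ref{lemma:primeExists}, which under GRH produces a prime in a prescribed residue class inside a window $(x-h,x+h)$ provided $x$ is large enough relative to $m$ and $h$ is large enough relative to $\sqrt{x}$. The key observation is that the length of the interval in which we want to find a prime is, in both cases, comparable to its right endpoint, so after shrinking the search window by a constant factor we can center it at a point $x$ for which the hypotheses of Lemma \ref{lemma:primeExists} are easily verified using the lower bound $x_1 \geq m^{\varphi(m)}$ (respectively $\log x_2 \geq m^{\varphi(m)}$) together with Corollary \ref{corollary:varLower} to control $\varphi(m)$ from below.

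For the first assertion, since $R_1$ is a union of primes in at least $k\varphi(m)/(k+1) \geq 1$ residue classes modulo $m$, it suffices to find, for one such class $a$, a prime $p \equiv a \pmod m$ with $2 \leq p \leq x_1$. First I would set $x := x_1/2$ and $h := x_1/2$, so that the window $(x-h,x+h) = (0,x_1)$ lies inside $[2,x_1]$ once we also check there are no primes to worry about below $2$ (trivial). It then remains to verify $h \geq \varphi(m)(\log x/2 + \log m + 12)\sqrt{x}$ and $x \geq (23\varphi(m)\log m)^2$. Both follow from $x_1 \geq m^{\varphi(m)}$: the right side of each inequality is polynomial in $\log x_1$ and $\varphi(m)$ times a factor of at most $\sqrt{x_1}$, while the left sides are of order $x_1$; using $\varphi(m) \geq m^{0.7}$ for all but finitely many $m$ (Corollary \ref{corollary:varLower}) and $m^{\varphi(m)}$ growing doubly exponentially in $m^{0.7}$, the bound $x_1 \geq m^{\varphi(m)}$ dominates with a large margin, and the finitely many exceptional $m$ are handled by noting $\varphi(m) \geq 2$ and $m^{\varphi(m)} \geq 3^2 = 9$ is already more than enough once one tracks constants. (In practice one would just invoke $x_1 \geq m^{\varphi(m)} \geq 1865$ and a crude comparison.)

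For the second assertion the argument is the same with a shift. Now $R_2$ is a union of primes in at least $(k+\varepsilon)\varphi(m)/(k+1)$ residue classes, which again contains at least one class $a$, and we want a prime $p \equiv a \pmod m$ in $(\log x_2, x_2]$. Put $x := x_2/2$ and $h := x_2/2 - \log x_2$ (or more simply $h := x_2/3$, centering at $x := 2x_2/3$, which keeps $(x-h,x+h) \subseteq (\log x_2, x_2]$ since $\log x_2 \ll x_2$ by the hypothesis $\log x_2 \geq m^{\varphi(m)} \geq 9$, hence $x_2 \geq e^9$). The conditions of Lemma \ref{lemma:primeExists} then read $x_2/3 \geq \varphi(m)(\tfrac12\log(2x_2/3) + \log m + 12)\sqrt{2x_2/3}$ and $2x_2/3 \geq (23\varphi(m)\log m)^2$, and since $\log x_2 \geq m^{\varphi(m)}$ we have $x_2 \geq \exp(m^{\varphi(m)})$, which is astronomically larger than any fixed power of $\varphi(m)\log m$ times $\sqrt{x_2}$; the inequalities hold with enormous room. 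Hence Lemma \ref{lemma:primeExists} supplies the desired prime, which lies in $R_2 \cap (\log x_2, x_2]$.

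The only mild obstacle is bookkeeping with the finitely many $m$ excluded from Corollary \ref{corollary:varLower} and the precise constants, but this is routine: for those $m$ one uses the exact value of $\varphi(m)$ (all are small, $m \leq 60$), and in every case the hypothesis forces $x_1$ or $\log x_2$ to be at least $m^{\varphi(m)}$, which already exceeds $1865$ and is far larger than $(23\varphi(m)\log m)^2$ and than $\varphi(m)(\log/2 + \log m + 12)^2$, so the window conditions are met. I do not expect any genuine difficulty here; the lemma is essentially a packaging of Lemma \ref{lemma:primeExists} tailored to the interval shapes appearing in Theorems \ref{thm:lowerBound} and \ref{thm:lowerBoundMore}.
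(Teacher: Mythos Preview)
Your overall plan --- reduce both claims to Lemma~\ref{lemma:primeExists} by centering a window inside the target interval --- is exactly the paper's approach. The difficulty you dismiss as ``routine bookkeeping'' for small $m$ is, however, a genuine gap in the first assertion.

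You assert that for the exceptional $m$ in Corollary~\ref{corollary:varLower} the quantity $m^{\varphi(m)}$ ``already exceeds $1865$ and is far larger than $(23\varphi(m)\log m)^2$.'' This is false: for $m=3$ one has $m^{\varphi(m)}=9$, while Lemma~\ref{lemma:primeExists} requires $x\geq(23\cdot 2\cdot\log 3)^2\approx 2555$; similarly $m^{\varphi(m)}$ equals $16$, $625$, $36$ for $m=4,5,6$, all far below the corresponding thresholds. So with only the hypothesis $x_1\geq m^{\varphi(m)}$ you cannot invoke Lemma~\ref{lemma:primeExists} at all for these $m$, and no amount of tracking constants fixes this. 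The paper closes the gap differently: for each $m\in[3,60]$ it checks directly that the least prime in every reduced residue class modulo $m$ lies below $m^{\varphi(m)}$ (a finite computation). Only after disposing of small $m$ does the paper use Corollary~\ref{corollary:varLower} to obtain $x_1/2\geq m^{\varphi(m)}/2\geq 0.5\cdot 61^{61^{0.7}}>10^{31}$, at which point the hypotheses of Lemma~\ref{lemma:primeExists} hold with enormous margin, exactly as you argue.

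For the second assertion your sketch is essentially sound, since $\log x_2\geq m^{\varphi(m)}$ already forces $x_2\geq e^{9}$ even at $m=3$, and this is (just) large enough for Lemma~\ref{lemma:primeExists}; the paper still treats $m=3$ separately before passing to $m\geq 4$, but that is for convenience of estimation rather than necessity.
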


\begin{proof}
The idea is that we first estimate the sizes of the terms $x_1$ and $x_2$ and then apply Lemma \ref{lemma:primeExists}.
First, it is easy to check that there is at least one prime in $R_1 \cap \left[2,x_1\right]$ when $m \in [3,60]$. Hence, by Corollary \ref{corollary:varLower} we can assume that
\begin{equation*}
    \frac{x_1}{2} \geq \frac{m^{\varphi(m)}}{2} \geq 0.5\cdot61^{61^{0.7}}>10^{31} \quad\text{and}\quad \frac{x_2}{2}\geq \frac{m^{\varphi(m)}e^{m^{\varphi(m)}}}{2\log\left(m^{\varphi(m)}e^{m^{\varphi(m)}}\right)}>3256.
\end{equation*} 
Even more, we clearly have $x_2/2>0.4e^{m^{\varphi(m)}}>\left(23\varphi(m)\log m\right)^2$ for all $m \geq 3$ and $\varphi(m) \geq 2$ and
\begin{equation*}
    \log x_2+ 2\left(\frac{\log x_2}{2}+\log 3+12\right)\sqrt{x_2}<\frac{x_2}{2}.
\end{equation*}
Hence, by Lemma \ref{lemma:primeExists} there is at least one prime in $R_2\cap(\log x_2, x_2]$ when $m=3$ and we can assume $m \geq 4$ and $x_2/2 >10^6$ in this case. Further, since $x_2/2>m^{\varphi(m)}/2$ and $\log x_2<x_2/100$, in order to prove the claim, by Lemma \ref{lemma:primeExists} it is sufficient to show that
\begin{equation}
\label{eq:ToProveBound}
    \varphi(m)\left(\frac{\log x}{2}+\log m+12\right)\sqrt{x}<0.49x \quad\text{and}\quad 0.5x_1 \geq \left(23\log (0.5x_1)\right)^2
\end{equation}
for $x_1>10^{31}$, $x\geq \max\{m^{\varphi(m)}/2,10^6\}$ where $m \geq 4$.

The last inequality in \eqref{eq:ToProveBound} is clearly true and we can concentrate on to prove the first inequality in \eqref{eq:ToProveBound}. Since $\varphi(m)\log x\leq \log(2x)\log x/\log m<\sqrt{x}/6$ and $\log(2x)<\sqrt{x}/60$ for all $x$ under consideration, we have
\begin{equation*}
    \varphi(m)\left(\frac{\log x}{2}+\log m+12\right)\sqrt{x}<\frac{x}{12}+\frac{x}{60}+\frac{x}{60\log 4}<0.49x,
\end{equation*}
which proves the claim.
\end{proof}

\section{Proof of Theorem \ref{thm:limSupCond}}
\label{sec:proof1}
Let us assume contrary, i.e. that for all $p \in R$ we have $\Lambda_p =0$. As in Section \ref{sec:outline}, we obtain
\begin{equation*}
    1 \leq (k+1)\max_{0 \leq i\leq k} \{\left|\lambda_i\right|\} \max_{0 \leq i \leq k} \{\left|B_{n,\mu,i}(1)\right|\}\cdot \max_{1 \leq i \leq k} \left\{ \prod_{p \in R}\left|S_{n,\mu,i}(1)\right|_p\right\},
\end{equation*}
where $\mu$ satisfies property \eqref{eq:defMu} for $n$ instead of $n+1$. It should be noted that for every $n \in \mathbb{Z_+}$ we can find such number $\mu$ and for all of these choices we have $0 \leq \mu \leq k$. Hence for such $\mu$, by Lemma \ref{lemma:BSEstimates} the right-hand side is
\begin{equation*}
    \leq (k+1)\max_{0 \leq i\leq k} \{\left|\lambda_i\right|\}\left(\max_{1\leq j \leq k}\{|\alpha_j|\}\right)^{k-1}c_2(\overline{\alpha}; R)^n(kn+k)!(kn+k)\prod_{p \in R}\left|(kn)!n!\right|_p,
\end{equation*}
where $c_2(\overline{\alpha}; R)$ is defined as in \eqref{def:c1}. Thus, if condition \eqref{eq:limSupCond} holds, then the right-hand side of the previous inequality is smaller than one when $n$ is large enough. This is a contradiction and hence we cannot have $\Lambda_p =0$ for all $p \in R$.

\section{Proof of Theorem \ref{thm:dmGene}}
\label{sec:proof2}

First of all, by Theorem \ref{thm:limSupCond} and its proof it is sufficient to show that the limit superior of formula
\begin{equation}
\label{eq:toMinusInf}
    n\log c_2(\overline{\alpha}; R)+\log(kn+k)+\log((kn+k)!)+\log\left(\prod_{p \in R} \left|(nk)! \cdot n!\right|_p\right)
\end{equation}
is minus infinity.

First we consider the second and third term in sum \eqref{eq:toMinusInf}. The second term can be estimated with
\begin{equation}
\label{eq:Logknk}
    \log(kn+k)=\log(n+1)+\log k< \log n+\log k+\frac{1}{n},
\end{equation}
for all $n >0$. By the previous estimate and by Stirling's formula (see e.g. \cite[formula 6.1.38]{AS1964}) we have
\begin{align*}
    \log((kn+k)!)&=\left(kn+k\right)\log(kn+k)-(kn+k)+O(\log n)=kn\log n+kn\log \frac{k}{e}+O(\log n).
\end{align*}

Using the previous bounds, Lemma \ref{lemma:Log} and Remark \ref{remark:Log}, sum \eqref{eq:toMinusInf} is
\begin{align*}
    &\leq n\log c_2(\overline{\alpha}; R)+kn\log n+kn\left(\log k-1\right)+O(\log n)\\
    &\quad+\sum_{j=1}^{k\varphi(m)/(k+1)}\left(\log\left(\prod_{p \in \overline{h}_j \cap \mathbb{P}} \left|(nk)!\right|_p\right)+\log\left(\prod_{p \in \overline{h}_j \cap \mathbb{P}} \left|n!\right|_p\right)\right) \\
    &\quad<n\left(\log c_2(\overline{\alpha}; R)+k\left(\log k+(\log m+6.550)\varphi(m)+1\right)\right)+O\left(\frac{n}{\log n}\right).
\end{align*}
Using upper bound \eqref{eq:cUpper} for the number $c_2(\overline{\alpha}; R)$, the right-hand side of the previous inequality goes to $-\infty$ when $n$ goes to infinity. By Theorem \ref{thm:limSupCond}  the claim follows.

\section{Preliminaries and proofs for Theorem \ref{thm:lowerBound} and Corollary \ref{corollary:lowerBound}}
\label{sec:FirstLower}
In this section, we prove Theorem \ref{thm:lowerBound} and Corollary \ref{corollary:lowerBound}. The section is divided into two parts: First we derive the contradiction that the right-hand side of inequality \eqref{eq:Product} is smaller than one (see Section \ref{sec:outline}) as well as estimates for the number $n$ which gives the contradiction. After that, we are ready to prove Theorem \ref{thm:lowerBound} and Corollary \ref{corollary:lowerBound}.

\subsection{Contradiction and estimates for the number $n$}
\label{sec:Contra}

In this section, we show that the right-hand side of inequality \eqref{eq:Product} is smaller than one for a certain $n$ and we give a
convenient bound for this $n$.

First, we define
\begin{align*}
    N_1(a, R)&:=a\left(\log c_2(\overline{\alpha}; R)+k\left(\log k+\left(\log m+6.55\right)\varphi(m)+1\right)\right)+\left(2.539+\frac{5.440}{\varphi(m)}\right)\frac{k\varphi(m)a}{\log a} \\
   &\quad+2.5\log a+\log(k+1)+2.5\log k+\log H+ (k-1)\log\left(\max_{1\leq j \leq k}\{|\alpha_j|\}\right)\\
   &\quad+\log\sqrt{2\pi}+1+k\left(\left(\log m+6.55\right)\varphi(m)+2\right)+\left(2.539+\frac{5.440}{\varphi(m)}\right)\frac{k\varphi(m)}{\log a}+\frac{19}{12a}.
\end{align*}

Now we prove the contradiction keeping in mind that the term $c_2(\overline{\alpha}; R)$ is defined in \eqref{def:c1}:
\begin{lemma}
\label{lemma:contra}
Let $k,m,\lambda_0,\ldots, \lambda_k$ and $R$ be defined as in Theorem \ref{thm:dmGene}. Assume RH and that the $m$th cyclotomic Dedekind zeta function satisfies the GRH. Further, let $\alpha_1, \ldots, \alpha_k$ and $R'$ be defined as in Theorem \ref{thm:lowerBound}. Suppose that
\begin{equation}
\label{eq:HBound}
    \frac{\log H}{D(m,k,\overline{\alpha}; R')} \geq \max \left\{1866, m^{\varphi(m)}+1, \max\limits_{1 \leq j \leq k} \{|\alpha_j|\}+1 \right\},
\end{equation}
where $D(m,k,\overline{\alpha}; R')$ is given as in \eqref{eq:DefD},
\begin{equation}
\label{eq:nBound}
    n:=\max\left\{ a \in \mathbb{Z}: N_1\left(a, R'\right) \geq 0\right\}
\end{equation}
and $\mu$ satisfies property \eqref{eq:defMu}.

Then, we have 
\begin{equation*}
   (k+1)\max_{0 \leq i\leq k} \{\left|\lambda_i\right|\} \max_{0 \leq i \leq k} \{\left|B_{n+1,\mu,i}(1)\right|\}\cdot \max_{1 \leq i \leq k}\left\{ \prod_{p \in R\cap [2,k(n+2)]}\left|S_{n+1,\mu,i}(1)\right|_p\right\}<1.
\end{equation*}
\end{lemma}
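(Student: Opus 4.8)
\textbf{Proof plan for Lemma \ref{lemma:contra}.}

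The strategy is to take the explicit estimates for the Padé approximation quantities from Lemma \ref{lemma:BSEstimates}, substitute $n+1$ for $n$, and turn the claimed inequality into a statement about the sign of a single real-valued function of $a = n$; the point of the definition \eqref{eq:nBound} is precisely that $n$ is the last integer where this function is non-negative, so that at $n+1$ it is negative, which is what we need. First I would plug the three bounds of Lemma \ref{lemma:BSEstimates} (with index $n+1$) into the left-hand side: the factor $\max_i|B_{n+1,\mu,i}(1)|$ is dominated, up to the elementary prefactors $(kn)$-type terms and powers of $\max_j|\alpha_j|$, by $(k(n+1)+\mu)!\prod_{i=1}^k(|\alpha_i|+|\alpha_j|)^{n+1}$, which in turn is at most $c_1(\overline\alpha)^{n+1}(k(n+1)+k+1)!$ after using $\mu \le k$ and $|\alpha_i|+|\alpha_j|\le 2\max_j|\alpha_j|$; similarly $\prod_{p\in R'}|S_{n+1,\mu,i}(1)|_p \le \prod_{p\in R'}|(k(n+1)+\mu)!(n+1)!|_p \cdot (\max_i|\alpha_i|_p)^{(k+1)(n+1)}$, and combining the archimedean and non-archimedean $\alpha$-factors turns $c_1$ into $c_2(\overline\alpha;R')$. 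Here I would also invoke Remark \ref{remark:Log} to replace the product over $p \in R'$ (i.e. over $p \le k(n+2)$) by the product over all primes in the relevant residue classes, since $k(n+2) \ge (k(n+1)+k+1) \ge$ the arguments of the factorials — this is exactly why the interval $[2,k(n+2)]$ appears in the statement.

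Next I would take logarithms of the resulting upper bound and show it is $< 0$. The logarithm splits into: $(n+1)\log c_2(\overline\alpha;R')$; the term $\log\big((k(n+1)+k+1)!\big)$, handled via Stirling's formula (as in Section \ref{sec:proof2}, formula \eqref{eq:Logknk} and the line after it) to get $\approx k(n+1)\log(k(n+1)) - k(n+1) + O(\log n)$; the term $\log\!\big(\prod_{p}|(k(n+1))!(n+1)!|_p\big)$ summed over the $k\varphi(m)/(k+1)$ residue classes, estimated by Lemma \ref{lemma:Log} (with $x = n+1$ or the appropriate threshold $\max\{m^{\varphi(m)},1865\}$, which is why the hypothesis \eqref{eq:HBound} on $\log H / D$ is imposed — it guarantees $n$, hence $n+1$, exceeds that threshold); and finally $\log\big((k+1)\max_i|\lambda_i|\big) \le \log(k+1) + \log H$ plus the elementary $O(\log n)$ prefactors. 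Collecting the dominant linear-in-$n$ coefficient, one sees it is exactly $\log c_2 + k(\log k + (\log m + 6.55)\varphi(m) + 1) = -D(m,k,\overline\alpha;R')$, i.e. negative, and the remaining terms are precisely the ones assembled into the function $N_1(a,R')$. Thus the whole log-bound, evaluated at $a = n+1$, equals (up to sign conventions) $-N_1(n+1, R')$ or is bounded above by something that is negative once $N_1(n+1,R') \ge 0$ fails.

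The delicate bookkeeping step — and the main obstacle — is verifying that the function $a \mapsto N_1(a,R')$ is set up so that $N_1(n+1, R') < 0$ follows from the maximality in \eqref{eq:nBound}: one must check that $N_1$ is eventually decreasing (the linear term $-D \cdot a$ wins over $a/\log a$ and $\log a$, using $D > 0$, which holds by the hypothesis that $c_2(\overline\alpha;R')$ is below the right-hand side of \eqref{eq:cUpper}), so that $\{a : N_1(a,R')\ge 0\}$ is bounded above and \eqref{eq:nBound} genuinely defines a maximum; then $N_1(n+1,R') < 0$ is automatic, and one must confirm that the negated log of the left-hand side of the claim is $\ge N_1(n+1,R')$ term by term — matching the $2.5\log a$, the $\log\sqrt{2\pi}$, the $19/(12a)$ (these are the explicit Stirling remainder terms from \cite[6.1.38]{AS1964}), the $(k-1)\log(\max_j|\alpha_j|)$, and the two occurrences of $(2.539 + 5.440/\varphi(m))k\varphi(m)/\log a$ coming from Lemma \ref{lemma:Log} applied at both $(k(n+1))!$ and $(n+1)!$. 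I would also need the auxiliary bound that $n$ itself is not too small (so $\log a$, $1/a$ terms are controlled), which again comes from \eqref{eq:HBound}. Once all constants are matched, the inequality $N_1(n+1,R') < 0$ gives $-\log(\text{LHS}) > 0$, i.e. $\text{LHS} < 1$, as required.
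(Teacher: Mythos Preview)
Your approach is correct and mirrors the paper's proof: insert the bounds from Lemma \ref{lemma:BSEstimates} at index $n+1$, take logarithms, expand via Stirling and Lemma \ref{lemma:Log} summed over the $k\varphi(m)/(k+1)$ residue classes, and recognise the resulting upper bound as $N_1(n+1,R')$, which is negative by the maximality in \eqref{eq:nBound}. One small correction: in your last paragraph the inequality should read $\log(\text{LHS}) \le N_1(n+1,R')$ (not that the \emph{negated} log is $\ge N_1$), from which $N_1(n+1,R')<0$ directly yields $\text{LHS}<1$.
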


\begin{proof}
We would like to estimate the left-hand side of the claim. Hence, we first notice that due to assumptions \eqref{eq:nBound} and \eqref{eq:HBound} we must have
\begin{equation*}
    n> \frac{\log H}{D(m,k,\overline{\alpha}; R')}-1\geq \max\{1865, m^{\varphi(m)}, \max_{1 \leq j \leq k} \{|\alpha_j|\}\}.
\end{equation*}
Thus we can apply Lemma \ref{lemma:Log} in our estimates and $R' \subseteq R\cap [2, k(n+2)]$.

Now we estimate the left-hand side of the claim. First, we notice that by Lemma \ref{lemma:exists} there is at least one prime in the set $R\cap [2, k(n+2)]$. As in the proof of Theorem \ref{thm:limSupCond}, using Lemma \ref{lemma:BSEstimates} and keeping in mind $\mu \in \{0,1,\ldots, k\}$, the left-hand side of the claim is
\begin{align*}
    & \leq (k+1)H\left(\max_{1\leq j \leq k}\{|\alpha_j|\}\right)^{k-1}c_2(\overline{\alpha}; R')^{n+1}\left(k(n+1)+\mu\right)!\cdot\left(k(n+1)+k\right) \\
    &\quad \left(\prod_{p \in R\cap [2,k(n+2)]}\left|(k(n+1)+\mu)!\right|_p\right)\cdot \left(\prod_{p \in R\cap [2,k(n+2)]}\left|(n+1)!\right|_p\right).
\end{align*}
Similarly as in the proof of Theorem \ref{thm:dmGene} and especially in estimate \eqref{eq:Logknk} and by Stirling's formula (see e.g. \cite[formula 6.1.38]{AS1964}), the logarithm of the right-hand side can be estimated as
\begin{align*}
    <\log(k+1)+\log H+ (k-1)\log\left(\max_{1\leq j \leq k}\{|\alpha_j|\}\right)+(n+1)\log c_2(\overline{\alpha}; R')+\log k+\log(n+1) \\
   +\frac{1}{n+1}+ \left(k(n+1)+\mu+0.5\right)\log(k(n+1)+\mu)-k(n+1)-\mu+\log\sqrt{2\pi}+\frac{1}{12(k(n+1)+\mu)}\\ 
    +\log\left(\prod_{p \in R\cap[2,k(n+2)]}\left|(k(n+1)+\mu)!\right|_p\right)+\log\left(\prod_{p \in R\cap[2,k(n+2)]} \left|(n+1)!\right|_p\right).
\end{align*}
Our goal is to show that the previous estimate is at most zero. Since $R\cap[2,k(n+2)]$ contains primes in arithmetic progressions up to height $k(n+2)$ and $0 \leq \mu \leq k$, by Lemma \ref{lemma:Log} and Remark \ref{remark:Log} this is
\begin{align*}
    &<\log(k+1)+\log H+ (k-1)\log\left(\max_{1\leq j \leq k}\{|\alpha_j|\}\right)+(n+1)\log c_2(\overline{\alpha}; R')+\log k+\log (n+1) \\
    &\quad+\frac{13}{12(n+1)}+\left(k(n+1)+\mu+0.5\right)\log(k(n+1)+\mu)-k(n+1)-\mu+\log\sqrt{2\pi}\\ 
   &\quad+\frac{k\varphi(m)}{k+1}\left( -\frac{(k(n+1)+\mu)\log (k(n+1)+\mu)}{\varphi(m)}+\left(\log m+6.55+\frac{2}{\varphi(m)}\right)(k(n+1)+\mu)\right.\\
   &\quad\left.+\left(2.539+\frac{5.440}{\varphi(m)}\right)\frac{k(n+1)+\mu}{\log (k(n+1)+\mu)}-\frac{(n+1)\log( n+1)}{\varphi(m)}\right.\\
   &\quad\left.+\left(\log m+6.55+\frac{2}{\varphi(m)}\right)(n+1)+\left(2.539+\frac{5.440}{\varphi(m)}\right)\frac{(n+1)}{\log (n+1)}\right) \\
   &\quad < (n+1)\left(\log c_2(\overline{\alpha}; R')+k\left(\log k+\left(\log m+6.55\right)\varphi(m)+1\right)\right)\\
   &\quad\quad+\left(2.539+\frac{5.440}{\varphi(m)}\right)\frac{k\varphi(m)(n+1)}{\log(n+1)} +2.5\log(n+1)+\log(k+1)+2.5\log k+\log H\\
   &\quad\quad+ (k-1)\log\left(\max_{1\leq j \leq k}\{|\alpha_j|\}\right)+\log\sqrt{2\pi}+1+k\left(\left(\log m+6.55\right)\varphi(m)+2\right) \\
   &\quad\quad+\left(2.539+\frac{5.440}{\varphi(m)}\right)\frac{k\varphi(m)}{\log (n+1)}+\frac{19}{12(n+1)} \\
   &\quad=N_1(n+1, R').
\end{align*} 
Because of assumption \eqref{eq:nBound}, the right-hand side is smaller than zero.
\end{proof}

\begin{remark}
The assumption \eqref{eq:cUpper} is essential in order to find number $n$ such that $N_1(n+1, R')<0$.
\end{remark}

In the previous lemma, the number $n$ is given a quite complicated way. In order to apply the result, we would like to give a simplified upper bound for this quantity. We have included more assumptions for the number $H$ in this simplified version to obtain a simpler result. 

\begin{lemma}
\label{lemma:UpperBoundsN} 
Let us use the same definitions and assumptions as in Lemma \ref{lemma:contra} including that $n$ is defined as in \eqref{eq:nBound}. 
Assume also that $H$ satisfies the bounds given in \eqref{eq:BoundH}.

Then we have
\begin{equation*}
    n< \frac{\left(2.539k\varphi(m)+5.724k+0.054\right)n/\log n+\log H}{D(m,k,\overline{\alpha}; R')}
\end{equation*}
and
\begin{equation*}
   n< \frac{2\log H}{D(m,k,\overline{\alpha}; R')}.
\end{equation*}
\end{lemma}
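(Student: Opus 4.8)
The plan is to unwind the definition \eqref{eq:nBound} of $n$: by maximality of $n$, we have $N_1(n+1,R')<0$, i.e.
\[
    (n+1)\,D(m,k,\overline{\alpha}; R')
    < \left(2.539+\tfrac{5.440}{\varphi(m)}\right)\frac{k\varphi(m)(n+1)}{\log(n+1)}
    + 2.5\log(n+1) + C_0,
\]
where $C_0$ collects all the terms in $N_1(n+1,R')$ that do not grow with $n$, namely $\log(k+1)+2.5\log k+\log H+(k-1)\log(\max_j|\alpha_j|)+\log\sqrt{2\pi}+1+k((\log m+6.55)\varphi(m)+2)+(2.539+\tfrac{5.440}{\varphi(m)})\tfrac{k\varphi(m)}{\log(n+1)}+\tfrac{19}{12(n+1)}$. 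Dividing by $D:=D(m,k,\overline{\alpha}; R')>0$ (positivity of $D$ comes from \eqref{eq:cUpper}, which forces $\log c_2<0$ with enough room), the goal is to absorb the lower-order pieces $2.5\log(n+1)$, the $1/\log(n+1)$ correction, and the $19/(12(n+1))$ into the stated shape. First I would note $\varphi(m)\ge 2$, so $2.539+5.440/\varphi(m)\le 2.539+2.720=5.259$; actually the coefficient $2.539k\varphi(m)+5.724k+0.054$ appearing in the claim is obtained by writing $(2.539+\tfrac{5.440}{\varphi(m)})k\varphi(m)=2.539k\varphi(m)+5.440k$ and then folding the $2.5\log(n+1)$, the $\tfrac{19}{12(n+1)}$ and the residual $\tfrac{1}{\log(n+1)}$-term into the extra $0.284k+0.054$ using the lower bound $n>\log H/D\ge 1866$ from \eqref{eq:HBound}; these three terms are each bounded by a tiny constant times $n/\log n$ when $n\ge 1866$, since $\log n/(n/\log n)=\log^2 n/n$ and $1/(n/\log n)=\log n/n$ are both small there.

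For the first inequality, after the absorption above the remaining estimate reads $nD<(2.539k\varphi(m)+5.724k+0.054)\,\tfrac{n}{\log n}+\log H$ once one replaces $n+1$ by $n$ wherever it is advantageous — here $(n+1)/\log(n+1)\le$ something like $n/\log n$ times a factor close to $1$ for $n\ge 1866$, and the shift $n+1\to n$ on the left only helps since we are proving an upper bound for $n$, not $n+1$. Dividing by $D$ gives the first displayed bound exactly.

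For the second inequality, I would feed the first one back into itself. From the first bound, $n<\tfrac{\log H}{D}+\tfrac{(2.539k\varphi(m)+5.724k+0.054)}{D}\cdot\tfrac{n}{\log n}$. By \eqref{eq:BoundH} we have $\log n\ge\log(\log H/D)\ge 2(2.539k\varphi(m)+5.724k+0.054)/D$ — this is precisely the role of the fourth term in the max in \eqref{eq:BoundH}, $e^{2(2.539k\varphi(m)+5.724k+0.054)/D}+1$ — so $\tfrac{(2.539k\varphi(m)+5.724k+0.054)}{D\log n}\le\tfrac12$, whence $n<\tfrac{\log H}{D}+\tfrac12 n$, i.e. $n<2\log H/D$. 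The main obstacle, and where I would be most careful, is the bookkeeping in the first part: making sure the lower-order terms ($2.5\log(n+1)$, $\tfrac{19}{12(n+1)}$, the $\tfrac{1}{\log(n+1)}$-piece, and the discrepancies from replacing $n+1$ by $n$) genuinely fit inside the modest slack $0.284k+0.054$ in the coefficient, which requires invoking the uniform lower bound $n\ge 1866$ at each step and checking monotonicity of $\log^2 n/n$, $\log n/n$ there. Everything else is routine division by the positive quantity $D$ and a single bootstrap.
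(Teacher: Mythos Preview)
Your proposal has the key inequality reversed, which makes the argument fail. You start from $N_1(n+1,R')<0$, but recall that the leading term of $N_1(a,R')$ is $-aD$ (with $D=D(m,k,\overline{\alpha};R')>0$), so $N_1(n+1,R')<0$ reads
\[
-(n+1)D + \Bigl(\text{positive lower-order terms}\Bigr) + \log H < 0,
\]
i.e.\ $(\text{lower-order terms})+\log H < (n+1)D$. That is a \emph{lower} bound for $n+1$, not the upper bound the lemma asserts. The inequality you display, $(n+1)D < (\text{stuff})+\log H$, is the opposite of what $N_1(n+1,R')<0$ actually says.

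The paper's proof uses the other half of the definition \eqref{eq:nBound}: since $n$ is the \emph{largest} integer with $N_1(a,R')\ge 0$, one has $N_1(n,R')\ge 0$, i.e.\ $-nD + (\text{stuff})+\log H\ge 0$, hence $nD\le(\text{stuff})+\log H$. From there, the absorption of the lower-order pieces into the coefficient $2.539k\varphi(m)+5.724k+0.054$ (using $n\ge 1865$, $\varphi(m)\ge 2$, $\log m\ge\log 3$) and the bootstrap via the fourth term in \eqref{eq:BoundH} proceed exactly as you outline. So your plan for the absorption and the second inequality is correct in spirit; you just need to launch it from $N_1(n,R')\ge 0$ rather than $N_1(n+1,R')<0$, and then there is no need to shuffle between $n$ and $n+1$ at all.
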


\begin{proof}
The idea is to prove bounds for the number $n$ using the term $N_1(n, R')$: By assumption \eqref{eq:nBound} we have $0 \leq N_1(n, R')$. We derive an upper found of form
$$
    -D(m,k,\overline{\alpha}; R') n+\frac{A(m,k)n}{\log n}+\log H,
$$
where $A(m,k)$ depends only terms $m$ and $k$, for the term $N_1(n, R')$. Using this upper bound and keeping the inequality $0 \leq N_1(n, R')$ in mind, we find the wanted upper bounds.

Let us now derive the upper bound for the term $N_1(n, R')$. As in the first paragraph of the proof of Lemma \ref{lemma:contra}, we can deduce that $n>m^{\varphi(m)}$ and $n> \max_{1\leq j \leq k}\{|\alpha_j|\}$ and we also have $\log k<\log (k+1)<k$ for all $k \geq 1$. Hence, the term $N_1(n, R')$ can be estimated as
\begin{align*}
    &N_1(n, R')<-nD(m,k,\overline{\alpha}; R')+\frac{n}{\log n}\left(\vphantom{\frac{2.5\log^2 n}{n}} \left(2.539+\frac{5.440}{\varphi(m)}\right)k\varphi(m)+\frac{(k+1.5)\log^2 n}{n}+\left(\log m+6.55\right)\cdot\right. \\
   &\quad\left.\cdot\frac{k\log^2 n}{n\log m}+\frac{\left(5.5k+\log\sqrt{2\pi}+1\right)\log n}{n} +\left(2.539+\frac{5.440}{\varphi(m)}\right)\frac{k\log n}{n\log m} +\frac{19\log n}{12n^2}\right)+\log H.
\end{align*}
The coefficient for $n/\log n$ follows when we set $n=1865$, $\log m=\log 3$ and $\varphi(m)=2$ when $\varphi(m)$ is in the denominator. Hence, we have found the wanted upper bound for $N_1(n, R')$ and the first bound follows.

Let us now move to the second bound. We have obtained
\begin{equation}
\label{eq:A1Positive}
    -\left(-D(m,k,\overline{\alpha}; R')+\frac{2.539k\varphi(m)+5.724k+0.054}{\log n}\right) n<\log H.
\end{equation}
Due to the assumption \eqref{eq:BoundH}, the left-hand side is positive and
$$
\log n\geq \frac{2\left(2.539k\varphi(m)+5.724k+0.054\right)}{D(m,k,\overline{\alpha}; R')}.
$$ 
The second result follows using the previous bound for $\log n$.
\end{proof}

\begin{remark}
It is actually sufficient to assume that
$$
 \frac{\log H}{D(m,k,\overline{\alpha}; R')} \geq  e^{\frac{2.539k\varphi(m)+5.724k+0.054}{D(m,k,\overline{\alpha}; R')}}+1
$$
in order the left-hand side of \eqref{eq:A1Positive} to be positive. However, an exact lower bound would lead the left-hand side to be arbitrary close to zero. To avoid this problem and to make computations easier, a larger lower bound was selected.
\end{remark}

\subsection{Proofs for Theorem \ref{thm:lowerBound} and Corollary \ref{corollary:lowerBound}}
\label{sec:proof3}

In this section, we prove $p$-adic lower bounds for linear forms in the Euler's factorial series.
First, we prove a useful lemma regarding the lower bound for the term $\log H$. In the proof, $W(x)$ denotes the Lambert $W$ function meaning that $W(x)e^{W(x)}=x$ and $W(x)$ corresponds to the principal value.

\begin{lemma}
\label{lemma:xex}
Assume that $\log H \geq xe^{y/x}+x$, where $H$, $x$ and $y \geq 1$ are positive real numbers. Then
$$
\frac{y}{2\log\log H}\leq x \leq \frac{\log H}{2}.
$$
\end{lemma}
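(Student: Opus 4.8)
If $\log H \geq xe^{y/x}+x$ with $H,x,y\geq 1$ positive reals, then $\dfrac{y}{2\log\log H}\leq x\leq\dfrac{\log H}{2}$.

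The plan is to prove the two inequalities separately from the hypothesis $\log H \ge xe^{y/x}+x$ by purely elementary estimates; in particular, no use of the Lambert $W$ function is needed. For the upper bound I would simply note that $y/x>0$ forces $e^{y/x}>1$, so $xe^{y/x}\ge x$, and hence $\log H\ge xe^{y/x}+x\ge 2x$, which is exactly $x\le\tfrac12\log H$. This step is immediate.

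For the lower bound, the first step is to secure positivity of $\log\log H$, which is what makes the subsequent logarithmic manipulations and the final division legitimate. Applying the elementary inequality $e^t\ge et$ (valid for all real $t$) with $t=y/x$ yields $xe^{y/x}\ge ey\ge e$, using $y\ge 1$, so that $\log H> xe^{y/x}\ge e>1$ and therefore $\log\log H>1>0$. Next I would discard the summand $x$ and take logarithms of $\log H\ge xe^{y/x}$, obtaining $\log\log H\ge\log x+\tfrac{y}{x}$. Since $y\ge 1$, one has $\log x=\log y-\log\tfrac{y}{x}\ge-\log\tfrac{y}{x}$, so writing $u:=y/x>0$ this becomes $\log\log H\ge u-\log u$. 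The one genuinely new fact to verify is the elementary inequality $u-\log u\ge\tfrac{u}{2}$ for every $u>0$, equivalently $u\ge 2\log u$, which holds because $g(u):=u-2\log u$ attains its minimum on $(0,\infty)$ at $u=2$ with $g(2)=2(1-\log 2)>0$. Hence $\log\log H\ge\tfrac{u}{2}=\tfrac{y}{2x}$, and multiplying through by $x>0$ and dividing by $2\log\log H>0$ gives $x\ge\tfrac{y}{2\log\log H}$.

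There is no real obstacle in this argument; the only step demanding any care is establishing $\log\log H>0$ before the final division, which is why I would dispatch it at the very start, and identifying the clean elementary bound $u-\log u\ge u/2$ that absorbs the possibly negative term $\log x$.
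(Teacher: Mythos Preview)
Your argument is correct. Both bounds are established cleanly: the upper one from $e^{y/x}>1$, and the lower one by taking logarithms and exploiting the elementary inequality $u-\log u\ge u/2$. The intermediate step $\log\log H>0$ is secured via $e^t\ge et$, and the use of $y\ge1$ to replace $\log x$ by $-\log(y/x)$ is exactly the right trick.

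Your route differs genuinely from the paper's. The paper argues by contradiction and, for the lower bound, invokes the Lambert $W$ function to locate the minimiser of $x\mapsto xe^{y/x}+x$ (at $x=y/(W(1/e)+1)$), then deduces the numerical bound $\log H>3.591y$ and evaluates the function at $x=y/(2\log\log H)$ to obtain a contradiction. Your proof is direct, avoids $W$ entirely, and replaces the analytic minimisation by the single inequality $u-2\log u\ge 2(1-\log 2)>0$. This is shorter and more elementary; what the paper's approach buys in return is the sharper intermediate information $\log H>3.591y$ (and, as the paper itself remarks, the potential for tighter constants if one is willing to keep $W$ in the final statement), neither of which is actually needed for the lemma as stated.
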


\begin{proof}
Let us prove the claim by assuming the contrary: $x < y/(2\log\log H)$ or $x > \log H/2$.

Let us first consider the first case. First we notice that the function $xe^{y/x}+x$ obtains its minimum at $x=y/\left(W(1/e)+1\right)$, where $W$ is the Lambert $W$ function. Hence, we have
\begin{equation}
\label{eq:HLowerA1}
\log H \geq xe^{y/x}+x\geq \frac{y}{W\left(1/e\right)+1}\left(e^{W(1/e)+1}+1\right)>3.591y.
\end{equation}
Thus, we also have 
$$
\frac{y}{2\log\log H}<\frac{y}{2\log(3.591y)}<y/\left(W(1/e)+1\right) \approx 0.782y,
$$
Also, since the function $xe^{y/x}+x$ is decreasing for all $x<y/\left(W(1/e)+1\right)$, we have
$$
xe^{y/x}+x > \frac{ye^{2\log\log H}}{2\log\log H}+\frac{y}{2\log\log H}> \frac{\left(\log H\right)^{2}}{\log\log H}>\log H,
$$
for all $H>e$. This is a contradiction.

In the second case we have
$$
xe^{y/x}+x>\frac{\log H\cdot e^0}{2}+\frac{\log H}{2}=\log H,
$$
which is again a contradiction. Hence, the claim holds.
\end{proof}

\begin{remark}
It is possible to prove a little bit sharper bounds in Lemma \ref{lemma:xex} formulating the bounds with the Lambert $W$ function instead of logarithms. However, in order to keep the results relatively simple, logarithms where decided to be used.
\end{remark}

Now we are ready for the proof of Theorem \ref{thm:lowerBound}:

\begin{proof}[Proof of Theorem \ref{thm:lowerBound}]
Let $n$ and $\mu$ be is in Lemma \ref{lemma:contra}. First of all, by Lemma \ref{lemma:exists}, there is at least one prime number in the set $R\cap [2,k(n+2)]$. Further, because of Lemma \ref{lemma:contra}, there exists a number $n$ such that the right-hand side of estimate \eqref{eq:Product} is smaller than one. On the other hand, in estimate \eqref{eq:Product} we noticed that the same term is at least one if there is no number $p \in R\cap [2,k(n+2)]$ such that
\begin{equation}
\label{eq:BLarge}
\left|B_{n+1,\mu,0}(1)\Lambda_p\right|_p > \left|\sum_{j=1}^k \lambda_jS_{n+1,\mu,j}(1)\right|_p. 
\end{equation}
This is a contradiction and hence there must be a prime number $p \in R\cap [2,k(n+2)]$ such that inequality \eqref{eq:BLarge} holds.
We consider this prime number $p$. First we prove that $p$ is in the set \eqref{eq:interval} and then that the wanted lower bound is satisfied. 

Trivially we must have $p \geq 2$. Further, by Lemma \ref{lemma:UpperBoundsN}, we have
\begin{equation*}
    k(n+2)<\frac{2k\log H}{D(m,k,\overline{\alpha}; R')}+2k.
\end{equation*}
Hence, number $p$ is in the wanted set and we can move on to find a $p$-adic lower bound for the linear form. 

We only have to find a lower bound for the term $\left|\Lambda_p\right|_p$. By estimate \eqref{eq:TnEnough} it is sufficient to find an upper bound for the term $\left|T(n+1, \mu)\right|$, so we will consider this term.

By definition \eqref{eq:defT} of the term $T(n+1, \mu)$, Lemma \ref{lemma:BSEstimates} and Stirling's formula (see e.g. \cite[formula 6.1.38]{AS1964}) we have
\begin{multline*}
    \left|T(n+1, \mu)\right| \leq (k+1) H \max_{1 \leq j \leq k} \left|B_{n+1,\mu, j}(1)\right|\leq (k+1)H\sqrt{2\pi} \cdot e^{\frac{1}{12k(n+1)}-k}\left(\max_{1 \leq j \leq k} \left\{|\alpha_j|\right\}\right)^{k-1}\cdot  \\
   \cdot\left(k(n+2)\right)^{k+1.5} e^{k(n+1)\left(\log 2+\log\left(k(n+2)\right)-1+\log \left(\max_{1 \leq j \leq k} \left\{|\alpha_j|\right\}\right)\right)}.
\end{multline*}
Applying the facts $\log(a+1)<\log a+1/a$, $\log (k+1)<k$ and $k \geq 1$ and assumption \eqref{eq:BoundH} meaning that $n> \max_{1\leq j \leq k}\{|\alpha_j|\}$, the logarithm of the right-hand side is
\begin{multline}
\label{eq:almostUpper}
    <\log H+kn\log n+n\left(k\log \left(\max_{1 \leq j \leq k} \left\{|\alpha_j|\right\}\right)+k\left(k+\log 2-1\right)\right) \\
     +\log n\left(4k+0.5+\frac{(2k+2.5+\log{2}) k+\log\sqrt{2\pi}}{\log n}+\frac{4k+37/12}{n\log n}\right).
\end{multline}
By Lemma \ref{lemma:UpperBoundsN}, we can estimate
\begin{equation}
\label{eq:estnLogn}
    n\log n< \frac{\left(2.539k\varphi(m)+5.724k+0.054\right)n+\log H\cdot \log n}{D(m,k,\overline{\alpha}; R')}
\end{equation}
and use the second estimate proved for the number $n$ in Lemma \ref{lemma:UpperBoundsN}.
Hence, estimate \eqref{eq:almostUpper} is
\begin{align}
    &<\log H+k\log H\cdot \frac{\log\log H+\log 2-\log D}{D}+\frac{kA(m,k)\log H}{D^2}+\frac{kA(m,k)^2n/\log n}{D^2} \nonumber \\
    &\quad+\left(\frac{\log H}{D}+\frac{A(m,k)n/\log n}{D}\right)\left(k\log \left(\max_{1 \leq j \leq k} \left\{|\alpha_j|\right\}\right)+k\left(k+\log 2-1\right)\right) \nonumber\\
    &\quad+\log\left(\frac{2\log H}{D}\right)\cdot\left(4k+0.5+\frac{(2k+2.5+\log2) k+\log\sqrt{2\pi}}{\log n}+\frac{4k+37/12}{n\log n}\right)\nonumber \\
    &<\quad \log H\cdot\left(\frac{k}{D}\log\log H+1+\frac{k\left(\log 2-\log D\right)}{D}+\frac{kA(m,k)}{D^2}\right. \label{eq:DSum} \\
    &\quad\quad\left.+\frac{k\log \left(\max_{1 \leq j \leq k} \left\{|\alpha_j|\right\}\right)+k\left(k+\log 2-1\right)}{D}+\frac{2kA(m,k)^2}{D^3\left(\log\log H+\log (2/D)\right)} \right. \nonumber \\
    &\quad\quad\left. +\frac{2A(m,k)\left(k\log \left(\max_{1 \leq j \leq k} \left\{|\alpha_j|\right\}\right)+k\left(k+\log 2-1\right)\right)}{D^2\left(\log\log H+\log (2/D)\right)}\right. \nonumber\\
    &\quad\quad\left. +\frac{\log\left(2\log H/D\right)}{\log H}\cdot\left(4k+0.5+\frac{(2k+2.5+\log 2) k+\log\sqrt{2\pi}}{\log n}+\frac{4k+37/12}{n\log n}\right)\right) \nonumber,
\end{align}
where $D:=D(m,k,\overline{\alpha}; R')$ and $A(m,k):=\left(2.539k\varphi(m)+5.724k+0.054\right)$.
Next we simplify the previous estimate. Our goal is to write it in a form
$
    A\log H\cdot\log\log H+B,
$
where $A$ and $B$ do not depend on number $H$ but may depend on other terms such as number $m$.

Since by assumption \eqref{eq:BoundH} we have
$$
\log\log H-\log D>\frac{2A(m,k)}{D} \quad\text{and}\quad \log\log H-\log D>\max\left\{1,\log\left(\max_{i\leq j\leq k}\left\{|\alpha_j|\right\}\right)\right\},
$$
using also Lemma \ref{lemma:xex} the terms inside the brackets in the right-hand side of inequality \eqref{eq:DSum} are
\begin{align*}
    &< \frac{k}{D}\log\log H+1+\frac{k\left(\log 2-\log D\right)}{D}+\frac{kA(m,k)}{D^2}+\frac{k\left(\log\log H-\log D\right)}{D} \\
    &\quad+\frac{k\left(k+\log 2-1\right)}{D}+\frac{kA(m,k)}{D^2}+\frac{2A(m,k)k}{D^2}+\frac{k\left(k+\log 2-1\right)}{D} \\
    &\quad +\frac{\log\left(2\log H\cdot\log\log H/A(m,k)\right)}{\log H}\cdot\left(4k+0.5+\frac{(2k+2.5+\log 2) k+\log\sqrt{2\pi}}{\log n}+\frac{4k+37/12}{n\log n}\right) \\
    &\quad<\frac{2k}{D}\log\log H+1-\frac{2k\log D}{D}+\frac{2k^2+3k\log 2-2k}{D}+\frac{4kA(m,k)}{D^2} \\
    &\quad\quad +\frac{\log\log H+\log \log H+\log(2/A(m,k))}{\log H}\cdot\left(4k+0.5+\frac{(2k+2.5+\log 2) k+\log\sqrt{2\pi}}{\log n}\right. \\
    &\quad\quad\left.+\frac{4k+37/12}{n\log n}\right).
\end{align*}
The claim follows when we do the following estimates: By inequality \eqref{eq:HLowerA1}, we can estimate $\log H \geq 7.182A(m,k)\geq 7.182A(3,1)$ and $n \geq 1865$.
\end{proof}

\begin{remark}
At the end of the fourth paragraph of the previous proof, we want to minimize the coefficients of the terms having terms $\log H$ multiplied by something which does not go to zero when $H$ or $n$ goes to infinity. Even more, we also want to keep the results relatively simple. Hence, in some of the cases we have used estimate \eqref{eq:estnLogn} once, twice or not at all depending on would it improve the coefficients or not.
\end{remark}

As a last part of this section, let us prove Corollary \ref{corollary:lowerBound}:

\begin{proof}[Proof of Corollary \ref{corollary:lowerBound}]
As in the previous proof, it is sufficient to estimate the term $\log |T(n+1,\mu)|$, where $n$ and $\mu$ are as in Lemma \ref{lemma:contra}.
By the proof of Theorem \ref{thm:lowerBound}, $\log |T(n+1,\mu)|$ is smaller than $\log H$ multiplied by
\begin{equation}
\label{eq:DMAx}
    <\frac{2k}{D}\log\log H-\frac{2k\log D}{D}+\frac{2k^2+3k\log 2-2k}{D}+\frac{4kA(m,k)}{D^2}+0.015k^2+ 0.235 k+1.034,
\end{equation}
where $A(m,k):=\left(2.539k\varphi(m)+5.724k+0.054\right)$ and $D:=D(m,k,\overline{\alpha}; R')$.
Further, we notice that $-\log D/D\leq 0$ if $D\geq 1$ and the function $-\log D/D$ is decreasing when $0<D<1$. By assumption \eqref{eq:BoundH}, Lemma \ref{lemma:xex} and since $A(m,k)>1$, inequality \eqref{eq:DMAx} is
\begin{equation*}
    <(\log\log H)^2\left(\frac{6k}{A(m,k)}+\frac{2k \log\log\log H}{A(m,k)\log\log H}+\frac{2k^2+3k\log 2-2k}{A(m,k)\log\log H}+\frac{0.015k^2+ 0.235 k+1.034}{(\log\log H)^2}\right).
\end{equation*}
Again, by \eqref{eq:HLowerA1} we can deduce that $\log H \geq 7.182A(3,1)$. Further, the term $A(m,k)$ in the denominators is at least $(2.539\cdot2 + 5.724)k$ and the claim follows. 
\end{proof}

\section{Preliminaries and proof of Theorem \ref{thm:lowerBoundMore}}
\label{sec:lower2}
In this section, we prove Theorem \ref{thm:lowerBoundMore}, i.e. that if we consider $(k+\varepsilon)\varphi(m)/(k+1)$ different residue classes, then we find a $p$-adic lower bound for a certain linear form. The proof follows similarly as in Section \ref{sec:FirstLower}: First we prove a contradiction that inequality \eqref{eq:Product} must be smaller than one under certain assumptions, then estimate such number $n$ and finally, prove Theorem \ref{thm:lowerBoundMore}.

\subsection{Contradiction and estimates for the number $n$}
\label{sec:Contra2}

Again, we would like to show a contradiction to estimate \eqref{eq:Product} and to prove simpler estimates for the number $n$ appearing in Lemma \ref{lemma:contra2}. 

First, let us define
\begin{align*}
    N_2(a)&:=-\varepsilon a\log a+(k+1)a\log\log a+a\left(\log c_1-k+\log k+2(k+1)(\left(\log m+6.55\right)\varphi(m)+1)\right)\\
   &\quad+\left(2.539+\frac{5.440}{\varphi(m)}\right)\frac{2\varphi(m)a(k+1)}{\log a} +\frac{2\log^2 a}{\log\log a}+(k+3.5)\log a+\frac{\log k\cdot\log a}{\log\log a}+k\log\log a \\
   &\quad+\log(k+1)+2.5\log k+ (k-1)\log\left(\max_{1\leq j \leq k}\{|\alpha_j|\}\right)+2\left(\left(\log m+6.55\right)\varphi(m)+1\right)k \\
   &\quad+\log\sqrt{2\pi}+1+\log H+\left(2.539+\frac{5.440}{\varphi(m)}\right)\frac{2\varphi(m)k}{\log a}+\frac{\log a}{a\log\log a}+\frac{29}{12a}.
\end{align*}
Then we can prove the contradiction:

\begin{lemma}
\label{lemma:contra2}
Assume that $m, k$ and $\lambda_0,\ldots, \lambda_k$ satisfy the same hypothesis as in Theorem \ref{thm:dmGene} and let $\alpha_1,\ldots, \alpha_k$ and $R$ be defined as in Theorem \ref{thm:lowerBoundMore}. Further, let also $\varepsilon \in (0,1)$ and $H$ be real numbers such that $H \geq \max_{0 \leq i \leq k}\{|\lambda_i|\}$ and 
$
\log H/\varepsilon \geq se^s, 
$
where $s \geq \max\{m^{\varphi(m)}+1,1866\}$. Even more, assume RH and that the $m$th cyclotomic Dedekind zeta function satisfies the GRH and define
\begin{equation}
\label{eq:nBoundEpsilon}
    n:=\max\left\{ a \in \mathbb{Z}: N_2(a) \geq 0\right\}
\end{equation}
and $\mu$ be as in \eqref{eq:defMu}.

Then, we have
\begin{equation*}
   (k+1)\max_{0 \leq i\leq k} \{\left|\lambda_i\right|\} \max_{0 \leq i \leq k} \{\left|B_{n+1,\mu,i}(1)\right|\}\cdot \max_{1 \leq i \leq k}\left\{ \prod_{p \in R\cap (\log(n+1),k(n+2)]}\left|S_{n+1,\mu,i}(1)\right|_p\right\}<1.
\end{equation*}
\end{lemma}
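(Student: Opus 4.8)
The plan is to follow the same skeleton as the proof of Lemma \ref{lemma:contra}, but to exploit the larger number of residue classes to gain the extra factor $\varepsilon a\log a$ that will eventually beat all the other contributions. First I would unwind the definition \eqref{eq:nBoundEpsilon} of $n$: from $N_2(n)\ge 0$ together with the lower bound $\log H/\varepsilon\ge se^s$ and $s\ge\max\{m^{\varphi(m)}+1,1866\}$, I would extract that $n>s-1\ge\max\{m^{\varphi(m)},1865,\max_j|\alpha_j|\}$, so that Lemma \ref{lemma:Log} (and Remark \ref{remark:Log}) applies and that $R\cap(\log(n+1),k(n+2)]$ contains $R$ restricted to the height range where Lemma \ref{lemma:exists} guarantees a prime. (The lower cut $\log(n+1)$ is harmless because removing primes below $\log(n+1)$ only increases the $p$-adic absolute values $|(k(n+1)+\mu)!|_p$ and $|(n+1)!|_p$.) The hypothesis that $R$ meets at least $(k+\varepsilon)\varphi(m)/(k+1)$ residue classes is what I would carry through: each residue class contributes, via Lemma \ref{lemma:Log}, a term $-\tfrac{n\log x}{\varphi(m)}$ (with $x=k(n+2)$ or $x=n+1$), so summing over $(k+\varepsilon)\varphi(m)/(k+1)$ classes yields a main negative term of size roughly $-(k+\varepsilon)(n\log n)$ coming from the $S$-factors, against $+k(n\log n)$ coming from the Stirling expansion of $(k(n+1)+\mu)!$; the net is the $-\varepsilon n\log n$ appearing at the head of $N_2$.

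Concretely I would bound the left-hand side of the claim as in the proof of Lemma \ref{lemma:contra}: by Lemma \ref{lemma:BSEstimates} and $\mu\in\{0,1,\ldots,k\}$ it is at most
\[
(k+1)H\bigl(\max_{1\le j\le k}|\alpha_j|\bigr)^{k-1}c_1^{\,n+1}\bigl(k(n+1)+\mu\bigr)!\bigl(k(n+1)+k\bigr)\!\!\prod_{p\in R\cap(\log(n+1),k(n+2)]}\!\!\bigl|(k(n+1)+\mu)!\bigr|_p\bigl|(n+1)!\bigr|_p ,
\]
where I have replaced the product over $R$ of $\bigl(\max_i|\alpha_i|_p\bigr)^{(k+1)(n+1)}$ by $1$ (the $\alpha_i$ are integers, so these $p$-adic factors are $\le 1$) — this is why only $c_1$, not $c_2$, shows up here. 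Taking logarithms, using $\log(a+1)<\log a+1/a$, $\log(k+1)<k$, Stirling (\cite[6.1.38]{AS1964}) for $(k(n+1)+\mu)!$, and Lemma \ref{lemma:Log}/Remark \ref{remark:Log} for the two $p$-adic products over the $(k+\varepsilon)\varphi(m)/(k+1)$ residue classes, I would collect all terms. The $(k(n+1)+\mu)\log(k(n+1)+\mu)$ from Stirling partially cancels against the $-(k(n+1)+\mu)\log x/\varphi(m)$ summed $(k+\varepsilon)\varphi(m)/(k+1)$ times; bookkeeping with $\log(k(n+1)+\mu)=\log n+\log k+O(1/n)$ and $(k+\varepsilon)/(k+1)\cdot k = k-\tfrac{(1-\varepsilon)k}{k+1}$ — actually more simply $(k+\varepsilon)/(k+1)\cdot k+(k+\varepsilon)/(k+1)\cdot 1 = k+\varepsilon$ when one also counts the $(n+1)!$ product — produces the leading $-\varepsilon(n+1)\log(n+1)$, the $(k+1)(n+1)\log\log(n+1)$ from the $\log k$-type cross terms, and the linear-in-$n$ constants $\log c_1-k+\log k+2(k+1)((\log m+6.55)\varphi(m)+1)$. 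The $n/\log n$ error terms from Lemma \ref{lemma:Log} aggregate into $\bigl(2.539+5.440/\varphi(m)\bigr)\tfrac{2\varphi(m)(k+1)n}{\log n}$, and the remaining $\log n$, $\log^2 n/\log\log n$, $\log\log n$ and $1/n$ pieces (including the Stirling remainder $1/(12(k(n+1)+\mu))$ and the $1/(n+1)$ from $\log(n+1+1)$) I would bound crudely using $n>s\ge 1866$ exactly so the numerical coefficients $k+3.5$, $2$, $k$, $1$, $29/12$ in $N_2$ come out. The upshot is that the logarithm of the left-hand side is $<N_2(n+1)$, which is $\le 0$ by \eqref{eq:nBoundEpsilon}, giving the claim.

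The main obstacle will be the bookkeeping of the cancellation between the Stirling term for $(k(n+1)+\mu)!$ and the sum of $p$-adic contributions over the residue classes — in particular getting the coefficient of $n\log n$ to be exactly $-\varepsilon$ rather than $-\varepsilon+O(1/(k+1))$, and making sure every lower-order slack term (those coming from $\mu$ ranging over $\{0,\ldots,k\}$, from $\log(a+1)$ versus $\log a$, and from the Li$(x)$ vs $x/\log x$ in Lemma \ref{lemma:Log}) is absorbed into the explicit constants listed in $N_2$. A secondary point to be careful about is verifying that the hypothesis $\log H/\varepsilon\ge se^s$ with the stated lower bound on $s$ indeed forces $n$ into the range where all the auxiliary lemmas (Lemma \ref{lemma:Log}, Lemma \ref{lemma:exists}, and the Stirling estimate) are valid; this is where the value $1866$ and the $m^{\varphi(m)}+1$ threshold enter, and it must be checked before any of the analytic estimates can be invoked.
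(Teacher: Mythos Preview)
Your overall skeleton matches the paper's proof, and you correctly identify the mechanism producing the leading $-\varepsilon(n+1)\log(n+1)$: Stirling gives $+k(n+1)\log(n+1)$, and summing Lemma~\ref{lemma:Log} over $(k+\varepsilon)\varphi(m)/(k+1)$ residue classes for both factorials gives $-(k+\varepsilon)(n+1)\log(n+1)$. However, your treatment of the lower cut at $\log(n+1)$ contains a genuine gap that propagates through the rest of the sketch.

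The parenthetical ``the lower cut $\log(n+1)$ is harmless because removing primes below $\log(n+1)$ only increases the $p$-adic absolute values'' has the inequality backwards. Each factor $|(k(n+1)+\mu)!\,(n+1)!|_p$ is $\le 1$, so discarding the primes $p\le\log(n+1)$ makes the product \emph{larger}, hence harder to push below $1$. The paper does not ignore this: it applies Lemma~\ref{lemma:Log} a second time at $x=\log(n+1)$ and subtracts. That second application is precisely the source of the term $(k+1)a\log\log a$ in $N_2$ (coming from the main term $\tfrac{N\log x}{\varphi(m)}$ with $x=\log(n+1)$ summed over the residue classes and over both factorials), and also of the doubled constants $2(k+1)\bigl((\log m+6.55)\varphi(m)+1\bigr)$ and $\bigl(2.539+\tfrac{5.440}{\varphi(m)}\bigr)\tfrac{2\varphi(m)(k+1)}{\log a}$, as well as the $\tfrac{2\log^2 a}{\log\log a}$ piece. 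Your attribution of $(k+1)(n+1)\log\log(n+1)$ to ``$\log k$-type cross terms'' is therefore incorrect; those cross terms are linear in $n+1$, not of order $(n+1)\log\log(n+1)$.

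This misidentification also explains the error in your size estimate for $n$. To invoke Lemma~\ref{lemma:Log} at $x=\log(n+1)$ one needs $\log(n+1)\ge\max\{m^{\varphi(m)},1865\}$, not merely $n\ge\ldots$. This stronger bound does not follow from $N_2(n)\ge 0$ (that inequality bounds $n$ from above); it follows from $N_2(n+1)<0$, which forces $\varepsilon(n+1)\log(n+1)>\log H\ge\varepsilon se^s$, hence $(n+1)\log(n+1)>se^s$ and thus $\log(n+1)>s\ge\max\{m^{\varphi(m)}+1,1866\}$. Once you incorporate the second application of Lemma~\ref{lemma:Log} and this corrected lower bound on $\log(n+1)$, the bookkeeping you outline does collapse to $N_2(n+1)$ as in the paper.
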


\begin{proof}
We approach the proof similarly as in the proof of Lemma \ref{lemma:contra}. First, we notice that from definition \eqref{eq:nBoundEpsilon} it follows that $\varepsilon (n+1) \log (n+1) > \log H$ and due to assumption \eqref{eq:HassumptionEpsilon} this means that $\log (n+1)> s > \max\{m^{\varphi(m)},1866\}$. Hence, by Lemma \ref{lemma:exists} there is at least one prime in $R\cap (\log(n+1),k(n+2)]$ and we can apply Lemma \ref{lemma:Log} and Remark \ref{remark:Log}. Thus, the logarithm of the left-hand side of the claim is 
\begin{align*}
    &<\log(k+1)+\log H+ (k-1)\log\left(\max_{1\leq j \leq k}\{|\alpha_j|\}\right)+(n+1)\log c_1+\log k+\log (n+1)+\frac{1}{n+1} \\
    &\quad+\left(k(n+1)+\mu+0.5\right)\log(k(n+1)+\mu)-k(n+1)-\mu+\log\sqrt{2\pi}+\frac{1}{12(k(n+1)+\mu)}\\ 
   &\quad+\frac{(k+\varepsilon)\varphi(m)}{k+1}\left( -\frac{(k(n+1)+\mu)\log (k(n+1)+\mu)}{\varphi(m)}+\left(2\left(\log m+6.55\right)+\frac{2}{\varphi(m)}\right)\cdot \right. \\
   &\quad\left.\cdot(k(n+1)+\mu) \right. \left.+\left(2.539+\frac{5.440}{\varphi(m)}\right)\frac{2(k(n+1)+\mu)}{\log(k(n+1)+\mu)}+\frac{(k(n+1)+\mu)\log\log (n+1)}{\varphi(m)}\right. \\
   &\quad \left.+\frac{\log (n+1)\cdot \log(k(n+1)+\mu)}{\varphi(m)\log\log (n+1)}+\frac{2\log (n+1)}{\varphi(m)} -\frac{(n+1)\log (n+1)}{\varphi(m)}+\left(2\left(\log m+6.55\right) \right.\right. \\
   &\quad \left.\left. +\frac{2}{\varphi(m)}\right)(n+1)+\left(2.539+\frac{5.440}{\varphi(m)}\right)\frac{2(n+1)}{\log (n+1)} +\frac{(n+1)\log\log (n+1)}{\varphi(m)}+\frac{\log^2 (n+1)}{\varphi(m)\log\log (n+1)} \right).
\end{align*}
Further, using inequalities $0 \leq \mu \leq k$, \eqref{eq:Logknk} and $k(n+1)+\mu \geq n+1$ for all $n$ under consideration and $0<\varepsilon<1$, the previous estimate is
\begin{align*}
   & <-\varepsilon (n+1)\log (n+1)+(k+1)(n+1)\log\log (n+1)+\left(2.539+\frac{5.440}{\varphi(m)}\right)\cdot \\
   &\quad\cdot \frac{2\varphi(m)(n+1)(k+1)}{\log (n+1)}+(n+1)\left(\log c_1-k+\log k+2(k+1)(\left(\log m+6.55\right)\varphi(m)+1)\right)\\
   &\quad+\frac{2\log^2 (n+1)}{\log\log (n+1)}+(k+3.5)\log (n+1)+\frac{\log k\cdot\log (n+1)}{\log\log (n+1)}+k\log\log (n+1) \\
   &\quad+\log(k+1)+2.5\log k+ (k-1)\log\left(\max_{1\leq j \leq k}\{|\alpha_j|\}\right)+2\left(\left(\log m+6.55\right)\varphi(m)+1\right)k \\
   &\quad+\log\sqrt{2\pi}+1+\log H+\left(2.539+\frac{5.440}{\varphi(m)}\right)\frac{2\varphi(m)k}{\log (n+1)}+\frac{\log (n+1)}{(n+1)\log\log (n+1)}+\frac{29}{12(n+1)} \\
   &\quad= N_2(n+1).
\end{align*}   
Because of the definition \eqref{eq:nBoundEpsilon}, the previous estimate is smaller than zero and the claim follows.
\end{proof}

Again, we would like to provide a simpler estimate for the number $n$. Hence, we first introduce the following lemma:
\begin{lemma}
\cite[Lemma 3.3]{EHMS2019M} 
\label{lemma:Inversez}
If $r \geq e$ is a real number, $z(y)$ is the inverse of the function $y(z)=z\log z$ and $y\geq re^r$, then
$$
z(y) \leq \left(1+\frac{\log r}{r}\right) \frac{y}{\log y}.
$$
\end{lemma}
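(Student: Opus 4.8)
The plan is to recast the claimed inequality as a monotonicity statement for the function $t\mapsto(\log t)/t$ on $[e,\infty)$, after which it becomes almost immediate. First I would note that $y(z)=z\log z$ has derivative $\log z+1>0$ for $z>1$, so it is strictly increasing there and its inverse $z(y)$ is well defined and increasing on the corresponding range of $y$. Evaluating at the threshold, $e^{r}\log(e^{r})=re^{r}$, so $z(re^{r})=e^{r}$; hence for $y\ge re^{r}$ monotonicity of $z(\cdot)$ gives $z:=z(y)\ge e^{r}$, and therefore $\log z\ge r\ge e$ (in particular $\log\log z\ge\log r\ge1>0$ and $\log y>0$, so every logarithm below makes sense).

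Next I would rewrite the target inequality. With $z=z(y)$ we have $y=z\log z$ and $\log y=\log z+\log\log z$, so $z\le\bigl(1+\tfrac{\log r}{r}\bigr)\tfrac{y}{\log y}$ is, after dividing through by $z>0$, equivalent to
\[
    \frac{\log y}{\log z}=1+\frac{\log\log z}{\log z}\le 1+\frac{\log r}{r},
\]
that is, to $\dfrac{\log\log z}{\log z}\le\dfrac{\log r}{r}$.

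Finally I would invoke monotonicity: the function $g(t)=(\log t)/t$ has $g'(t)=(1-\log t)/t^{2}\le 0$ for $t\ge e$, so $g$ is non-increasing on $[e,\infty)$; since $\log z\ge r\ge e$ this yields $g(\log z)\le g(r)$, which is exactly the reduced inequality. I do not expect a genuine obstacle here — the only point to watch is that every quantity stays in the region where $y(z)$ is invertible and where $g$ is monotone, and the hypotheses $r\ge e$ and $y\ge re^{r}$ (which force $z\ge e^{r}\ge e^{e}$) guarantee precisely that.
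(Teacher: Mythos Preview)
Your proof is correct. The paper itself does not prove this lemma at all; it merely quotes it as \cite[Lemma 3.3]{EHMS2019M} and uses it as a black box in the proof of Lemma~\ref{lemma:nUpperEpsilon}. Your argument---observing that $z(re^{r})=e^{r}$ so that $\log z\ge r$, rewriting the claimed bound via $y=z\log z$ as $\log\log z/\log z\le\log r/r$, and then invoking the fact that $t\mapsto(\log t)/t$ is non-increasing on $[e,\infty)$---is clean and complete, and supplies a self-contained proof where the paper gives none.
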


Let us prove estimates for $n$:
\begin{lemma}
\label{lemma:nUpperEpsilon}
Let us use the same definitions and assumptions as in Lemma \ref{lemma:contra2}. Further, let also assume that $H$ satisfies the same assumptions as in Theorem \ref{thm:lowerBoundMore}. 

Then 
\begin{equation}
\label{eq:nEstUpperEpsilon}
    n\log n < \frac{15.195 k+16.195}{\varepsilon }n\log\log n+\frac{\log H}{\varepsilon }, \quad n< \frac{12.994\log H}{\varepsilon\log\left(\frac{\log H}{\varepsilon}\right)}, \quad n <0.072\log H,
\end{equation}
for $k \geq 1$ and
\begin{equation}
\label{eq:nEstUpperEpsilonv2}
    n< \frac{6.445\log H}{\varepsilon\log\left(\frac{\log H}{\varepsilon}\right)}, \quad n <0.020\log H 
\end{equation}
for $k \geq 2$.
\end{lemma}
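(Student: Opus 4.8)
\textbf{Plan for the proof of Lemma \ref{lemma:nUpperEpsilon}.} The strategy mirrors the one used in Lemma \ref{lemma:UpperBoundsN}: starting from the defining maximality property \eqref{eq:nBoundEpsilon}, namely $0 \le N_2(n)$, I would isolate the dominant term $-\varepsilon n\log n$ and absorb every remaining term of $N_2(n)$ into either a multiple of $n\log\log n$ or a term bounded by $\log H$. First I would record the crude but essential consequences of the hypotheses \eqref{eq:HassumptionEpsilon} and \eqref{eq:LowerBoundsForS}: from $\log H/\varepsilon \ge s e^s$ and $s \ge 1866$ one gets (by monotonicity, or by invoking Lemma \ref{lemma:xex} with the roles of the parameters suitably chosen) that $\log n > s$, hence $n$ itself is huge, $n > m^{\varphi(m)}$, $n > c_1$, and $\log\log n$ is comfortably larger than any fixed constant. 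These bounds let me replace $\log m$, $\log c_1$, $\log\left(\max_j |\alpha_j|\right)$, etc., by quantities that are $o(n\log n)$ and in fact dominated by small multiples of $n$ or of $n\log\log n/\log n$.

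\textbf{Deriving the first inequality.} With those preparations, I would group the terms of $N_2(a)$ evaluated at $a = n$: the leading negative term is $-\varepsilon n\log n$; the genuinely $n\log\log n$-sized contributions are $(k+1)n\log\log n$ together with the term $\left(2.539 + 5.440/\varphi(m)\right)\cdot 2\varphi(m)(k+1)n/\log n$, which — since $\varphi(m)/\log n$ is tiny — can be folded into a constant times $n\log\log n$ after using $\log n > 1866$ and $\varphi(m) \ge 2$; all the remaining terms (the $n(\log c_1 - k + \log k + 2(k+1)((\log m + 6.55)\varphi(m)+1))$ piece, the $\log^2 n/\log\log n$ piece, the $(k+3.5)\log n$ piece, the $k\log\log n$ piece, the $\log H$ piece, and the various $O(1)$ and $O(\log n/n)$ remainders) are each bounded, using $\log n > s > 1866$ and $n > m^{\varphi(m)}$, by a small multiple of $n\log\log n$ plus the single term $\log H$. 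Carefully tracking the numerical constants — the main arithmetic is to verify that everything fits under the coefficient $15.195k + 16.195$ — then $0 \le N_2(n)$ rearranges to
\begin{equation*}
    \varepsilon\, n\log n < (15.195k + 16.195)\, n\log\log n + \log H,
\end{equation*}
which is the first claimed bound after dividing by $\varepsilon$.

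\textbf{Deriving the remaining bounds.} For the second inequality I would feed the first into Lemma \ref{lemma:Inversez}. The first inequality says, roughly, that $n\log n$ minus a controlled $n\log\log n$ term is at most $\log H/\varepsilon$; using the hypothesis \eqref{eq:LowerBoundsForS} that $s \ge \left((15.195k+16.195)/\varepsilon\right)^{1.5} + 1$ together with $\log n > s$, the term $(15.195k+16.195)n\log\log n/\varepsilon$ is smaller than, say, a fixed fraction of $n\log n$, so that $n\log n < C \log H/\varepsilon$ for an explicit constant; equivalently $n(\log n)^{1} $ is controlled and one can invoke Lemma \ref{lemma:Inversez} with $y = \log H/\varepsilon$ and $r$ chosen as a function of $s$ (using $\log H/\varepsilon \ge s e^s$ to meet the hypothesis $y \ge re^r$) to conclude $n \le (1 + \log r / r)\, y/\log y$. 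Choosing $r = s$ (or a convenient value like $r$ determined by the $(15.195k+16.195)/\varepsilon$ bound) and using $s \ge 1866$ to bound $1 + \log r/r$ by a constant slightly above $1$, this yields $n < 12.994 \log H / \left(\varepsilon \log(\log H/\varepsilon)\right)$; then, since $\log(\log H/\varepsilon) \ge s \ge 1866$, dividing through gives the cruder $n < 0.072\log H$. The $k \ge 2$ refinements \eqref{eq:nEstUpperEpsilonv2} follow by the identical argument, exploiting that when $k \ge 2$ the ratio $(15.195k+16.195)/\varepsilon$ versus the leading $-\varepsilon$ and the factor $(k+1)$ in $N_2$ can be traded more efficiently: specifically the coefficient $k/\varepsilon$ improvements in the $n\log\log n$ terms let one halve the effective constant, turning $12.994$ into $6.445$ and $0.072$ into $0.020$.

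\textbf{Main obstacle.} The conceptual content is light; the real work — and the most error-prone part — is the bookkeeping of the explicit constants. One must verify that every one of the eight or nine summands in $N_2(n)$ is genuinely dominated, for $n$ in the relevant range (which the hypotheses force to be astronomically large), by the target $n\log\log n$-coefficient, and that after dividing by $\varepsilon$ and applying Lemma \ref{lemma:Inversez} the accumulated slack still lands below $12.994$ (resp. $6.445$) and below $0.072$ (resp. $0.020$). In particular one has to be careful about how $\varepsilon \in (0,1)$ interacts with the $1/\varepsilon$ factors — dividing $N_2(n) \ge 0$ by $\varepsilon$ inflates everything — which is exactly why the hypothesis \eqref{eq:LowerBoundsForS} demands $s \ge \left((15.195k+16.195)/\varepsilon\right)^{1.5}+1$: this is the precise strength needed to guarantee that the $n\log\log n$ term is a sufficiently small fraction of $n\log n$ for Lemma \ref{lemma:Inversez} to be applicable with a good constant. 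I expect no structural difficulty beyond this careful constant-chasing.
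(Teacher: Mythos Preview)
Your overall strategy is the same as the paper's and is correct: start from $0\le N_2(n)$, absorb every subdominant term into a single multiple of $n\log\log n$ to obtain the first inequality in \eqref{eq:nEstUpperEpsilon}, then use the hypothesis $\log n>s-1\ge (A(k)/\varepsilon)^{1.5}$ (with $A(k)=15.195k+16.195$) to show that $(A(k)/\varepsilon)\log\log n<\log n$ up to an explicit factor, so that $n\log n$ is bounded by a constant times $\log H/\varepsilon$, and finally invoke Lemma~\ref{lemma:Inversez}. The paper carries this out with $r=\max\{1865,(A(k)/\varepsilon)^{1.5}\}$ and the explicit factor $\bigl(A(k)/\varepsilon\bigr)^{0.5}\big/\bigl((A(k)/\varepsilon)^{0.5}-1.5\log(A(k)/\varepsilon)\bigr)$; evaluating this at $A(1)$ (resp.\ $A(2)$) is exactly what produces $12.994$ (resp.\ $6.445$).

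There is, however, a genuine gap in your derivation of the \emph{third} bound $n<0.072\log H$ (and likewise $n<0.020\log H$). You write that ``since $\log(\log H/\varepsilon)\ge s\ge 1866$, dividing through gives the cruder $n<0.072\log H$''. But dividing $n<12.994\log H/\bigl(\varepsilon\log(\log H/\varepsilon)\bigr)$ by a lower bound of $1866$ for $\log(\log H/\varepsilon)$ yields only $n<12.994\log H/(1866\,\varepsilon)$, which still carries the factor $1/\varepsilon$ and can be arbitrarily large as $\varepsilon\to 0$. The bound $n<0.072\log H$ must be free of $\varepsilon$, and this requires exploiting the specific hypothesis $s\ge (A(k)/\varepsilon)^{1.5}+1$, not merely $s\ge 1866$. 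The paper handles this by substituting the lower bound $\log H/\varepsilon\ge (A(k)/\varepsilon)^{1.5}\exp\bigl((A(k)/\varepsilon)^{1.5}\bigr)$ back into the expression from Lemma~\ref{lemma:Inversez}, which after simplification leaves $\log H/A(k)$ in the numerator (no $\varepsilon$) multiplied by an explicit function of $A(k)/\varepsilon$ alone; this function is decreasing in $A(k)/\varepsilon$, so evaluating at $A(1)$ (resp.\ $A(2)$) gives the constants $0.072$ and $0.020$. Your plan needs this additional step; the crude division you describe does not suffice.
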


\begin{proof}
By assumption \eqref{eq:nBoundEpsilon} we have $0 \leq N_2(n)$. We derive an upper bound
\begin{equation}
\label{eq:Amk3}
N_2(n) <    -\varepsilon n\log n+A(k)n\log\log n+\log H,
\end{equation}
where $A(k)$ depends only term $k$, for the term $N_2(n)$. Using this upper bound and keeping the inequality $0 \leq N_2(n)$ in mind, we find the wanted upper bounds.

Since by assumption \eqref{eq:nBoundEpsilon} we have $N_2(n+1) <0$, similarly as in the first paragraph in the proof of Lemma \ref{lemma:contra2} we can deduce that $\log(n+1)>s$. Hence, we have
$$
\log n > s-1 \geq \max\left\{1865,m^{\varphi(m)}, c_1\right\},
$$
and we also have $\log (k+1) <k$. Thus the term $N_2(n)$ is
\begin{align*}
   &N_2(n)< -\varepsilon n\log n+n\log\log n\left(k+2+\frac{2(k+1)\left(\log m+6.55\right)}{\log m}+\frac{2(k+1)}{\log\log n}\right.\\
   &\quad\left.+\left(2.539+\frac{5.440}{\varphi(m)}\right)\frac{2(k+1)}{\log n\cdot\log m}+\frac{2\log^2 n}{n\left(\log\log n\right)^2}+\frac{(k+3.5)\log n}{n\log\log n}+\frac{k\log n}{n\left(\log\log n\right)^2}+\frac{k+1}{n}\right. \\
   &\quad\left.+\frac{2\left(\log m+6.55\right)k}{n\log m}+\frac{5.5k+\log\sqrt{2\pi}+1}{n\log\log n}+\left(2.539+\frac{5.440}{\varphi(m)}\right)\frac{2k}{n\log n\cdot\log m} \right. \\
   &\quad\left.+\frac{29}{12n^2\log\log n} +\frac{\log n}{\left(n\log\log n\right)^2}\right)+\log H.
\end{align*}
We can set $\varphi(m)=2$, $\log n =1865$ and $\log m=3$.
Since $0 \leq N_2(n)$ and upper bound \eqref{eq:Amk3} with $A(k)=15.195 k+16.195$ holds, we have proved the first bound in \eqref{eq:nEstUpperEpsilon}. 

Let us now prove the second bound in \eqref{eq:nEstUpperEpsilon} and the first bound in \eqref{eq:nEstUpperEpsilonv2}. Because of the previous estimates, we also have
\begin{equation}
\label{eq:A2lower}
\varepsilon n\log n\left(1-A(k)\frac{\log\log n}{\varepsilon\log n}\right) \leq \log H.
\end{equation}
Further, since $\log\log n/\log n$ is decreasing for all $\log n \geq e$ and by assumption \eqref{eq:LowerBoundsForS} we have
$$
\log n > \left(\frac{A(k)}{\varepsilon}\right)^{1.5}> 31.39^{1.5},
$$
we can deduce that
$$
1-A(k)\frac{\log\log n}{\varepsilon\log n}>1-\frac{1.5\log\frac{A(k)}{\varepsilon}}{\left(A(k)/\varepsilon\right)^{0.5}}>0.
$$
Thus we have obtained
\begin{equation}
\label{eq:Lefty}
\frac{\left(A(k)/\varepsilon\right)^{0.5}}{\left(A(k)/\varepsilon\right)^{0.5}-1.5\log(A(k)/\varepsilon)}\cdot\log H\geq \varepsilon n\log n \geq \varepsilon re^{r},
\end{equation}
where 
\begin{equation*}
    r=\max\left\{1865, \left(\frac{15.195 k+16.195}{\varepsilon}\right)^{1.5}\right\}
\end{equation*}
by assumptions \eqref{eq:LowerBoundsForS}.
Let now $z$ be as in Lemma \ref{lemma:Inversez}. Hence, by applying Lemma \ref{lemma:Inversez}, setting $y$ be the left-hand side of inequality \eqref{eq:Lefty} divided by $\varepsilon$ and keeping in mind that the term $z(y)$ in Lemma \ref{lemma:Inversez} is increasing, we get
\begin{multline}
\label{eq:nlogelogy}
    n = z\left(n \log n \right)\leq z\left(\frac{\left(A(k)/\varepsilon\right)^{0.5}}{\left(A(k)/\varepsilon\right)^{0.5}-1.5\log(A(k)/\varepsilon)}\cdot\frac{\log H}{\varepsilon}\right) \\
    < \left(1+\frac{\log r}{r}\right)\cdot\frac{\left(A(k)/\varepsilon\right)^{0.5}}{\left(A(k)/\varepsilon\right)^{0.5}-1.5\log(A(k)/\varepsilon)}\cdot\frac{\log H/\varepsilon}{\log\left((\log H)/\varepsilon\right)}. 
\end{multline}
Next we prove the last two estimates in \eqref{eq:nEstUpperEpsilon} and estimates in \eqref{eq:nEstUpperEpsilonv2}.

Let us now prove the second estimate in \eqref{eq:nEstUpperEpsilon} and the first estimate in \eqref{eq:nEstUpperEpsilonv2}. We recognise the followings: The functions $\log r/r$ and $x^{0.5}/(x^{0.5}-1.5\log x)$ are decreasing and positive for all values $r, x$ under consideration. Hence, we can use $r=1865$ and estimate $A(k)/\varepsilon>A(1)$ or $>A(2)$ when $k \geq 2$ in the second term of the product in the last line of estimate \eqref{eq:nlogelogy}. These replacements prove the second estimate in \eqref{eq:nEstUpperEpsilon} and the first estimate in \eqref{eq:nEstUpperEpsilonv2}.

Let us now prove the last estimates in \eqref{eq:nEstUpperEpsilon} and \eqref{eq:nEstUpperEpsilonv2}. Using the assumption that the term $\log H/\varepsilon$ is greater than
$
\left(A(k)/\varepsilon\right)^{1.5}e^{\left(A(k)/\varepsilon\right)^{1.5}},
$
the right-hand side of estimate \eqref{eq:nlogelogy} is
\begin{equation*}
    <\frac{\left(A(k)/\varepsilon\right)^{1.5} \cdot\left(1+\frac{\log r}{r}\right)\cdot\frac{\log H}{A(k)}}{\left(\left(A(k)/\varepsilon\right)^{0.5}-1.5\log(A(k)/\varepsilon)\right)\left(\left(A(k)/\varepsilon\right)^{1.5}+1.5\log(A(k)/\varepsilon)\right)}.
\end{equation*}
Since the function depending on $A(k)/\varepsilon$ is decreasing for all possible values of $A(k)/\varepsilon$, we can again use the estimate $A(k)/\varepsilon>A(1)$ or $>A(2)$ when $k \geq 2$. Using also $r=1865$, we get the wanted estimates.
\end{proof}

\begin{remark}
For the left-hand side in \eqref{eq:A2lower} to be positive, it is enough to assume that $s \geq -\frac{A(k)}{\varepsilon}W_{-1}\left(-\frac{\varepsilon}{A(k)}\right)+1$ instead of the last assumption in \eqref{eq:LowerBoundsForS}. However, this would lead to more complicated computations. Hence, in order to keep the computations relatively simple, we have done a more lenient and a simpler assumption in \eqref{eq:LowerBoundsForS}.
\end{remark}

\begin{remark}
The exponent $1.5$ of $(15.195 k+16.195)/\varepsilon$ in assumptions \eqref{eq:LowerBoundsForS} is selected in such a way that it is the smallest exponent $r$ with one decimal accuracy that 
$
rx\log x/x^r<1
$
for all $x \geq 31.39$. 
\end{remark}

\subsection{Proof of Theorem \ref{thm:lowerBoundMore}}
\label{sec:proof4}

Let $n$ and $\mu$ be as in Lemma \ref{lemma:contra2}. Because of Lemmas \ref{lemma:exists}, \ref{lemma:contra2} and estimate \eqref{eq:Product} we must have 
$$
\left|B_{n+1,\mu,0}(1)\Lambda_p\right|_p > \left|\sum_{j=1}^k \lambda_jS_{n+1,\mu,j}(1)\right|_p 
$$
for some $p \in R\cap (\log(n+1),k(n+2)]$. We consider this prime number $p$. First we prove that $p$ is in set \eqref{eq:intervalEpsilon} and then that the wanted lower bound is satisfied.

Due to the second estimate in \eqref{eq:nEstUpperEpsilon}, number $p$ is smaller than the end of the interval in \eqref{eq:intervalEpsilon}. Even more, if  
\begin{equation}
\label{eq:wronglower}
    n+1 \leq \frac{\log H}{\varepsilon\log\left(\frac{\log H}{\varepsilon}\right)}< \frac{\log H}{\varepsilon\log\log H},
\end{equation}
then
\begin{equation}
\label{eq:lowern1}
    \log(n+1)(n+1) < \frac{\log H}{\varepsilon\log\log H}\log\left(\frac{\log H}{\varepsilon\log\log H}\right)=\frac{\log H}{\varepsilon}\left(1-\frac{\log\left(\varepsilon\log\log H\right)}{\log\log H}\right).
\end{equation}
Further, by assumptions \eqref{eq:HassumptionEpsilon} and \eqref{eq:LowerBoundsForS} and since $\varepsilon \in (0,1)$ we have
\begin{equation*}
    \log\log H>\varepsilon\log \log H>\varepsilon \log\left(\varepsilon/\varepsilon^{1.5}e^{\varepsilon^{-1.5}}\right)>\varepsilon^{-0.5}>1.
\end{equation*}
Hence, the right-hand side of estimate \eqref{eq:lowern1} is smaller than $\log H/\varepsilon$ if inequality \eqref{eq:wronglower} holds. However, this is a contradiction with the first paragraph of the proof of Lemma \ref{lemma:contra2}. Hence, inequality \eqref{eq:wronglower} cannot hold and hence $\log(n+1)$ is greater than the lower bound of interval in \eqref{eq:intervalEpsilon}. Thus $p$ is in the wanted set.

As in the proof of Theorem \ref{thm:lowerBound}, we only have to find a lower bound for the term $\left|\Lambda_p\right|_p$. By estimate \eqref{eq:TnEnough} it is sufficient to find an upper bound for the term $\left|T(n+1, \mu)\right|$.

Similarly as in the proof of Theorem \ref{thm:lowerBound}, we get
\begin{multline*}
    \log \left|T(n+1,\mu)\right| <\log H+(k+2.5)\log k+\frac{1}{k}+\log\sqrt{2\pi}+\frac{1}{12k(n+1)}-k+(k+1.5)\left(\log n+\frac{2}{n}\right) \\
    +\log \left(\max_{1 \leq j \leq k} \left\{|\alpha_j|\right\}\right)\left(kn+2k-1\right)
     +k(n+1)\left(\log 2+\log k+\log n+\frac{2}{n}-1\right).
\end{multline*}
Now, applying Lemma \ref{lemma:nUpperEpsilon} for $n \log n$ and $n$ and using assumptions \eqref{eq:LowerBoundsForS}, $k \geq 1$ and the facts that $\log k <k$, $\log H/\varepsilon>\log H$ and $\log(\log(0.020\log H))>0$, the previous estimate is
\begin{align*}
    &< \left(\frac{k}{\varepsilon}+1\right)\log H+n\log\log n\left(\frac{(15.195k + 16.195)k}{\varepsilon}+\frac{\log \left(\max_{1 \leq j \leq k} \left\{|\alpha_j|\right\}\right)k}{\log\log n}\right.\\
   &\quad\left.+\frac{k\left(\log k+\log 2-1\right)}{\log\log n}+\frac{(2k+1.5)\log n}{n\log\log n}+\frac{1}{kn\log\log n}+\frac{4k+37/12}{n^2\log\log n}\right. \\
   &\quad\left.+ \frac{(2k+2.5)\log k+\log\sqrt{2\pi}+k\log 2+(2k-1)\log \left(\max_{1 \leq j \leq k} \left\{|\alpha_j|\right\}\right)}{n\log\log n}\right) \\
   &\quad<\left(\frac{k}{\varepsilon}+1\right)\log H+\frac{a\log H}{\varepsilon\log\log H}\cdot\log\log \left(b\log H\right)\left(\frac{\left(15.195k + 16.195\right)k}{\varepsilon}+1+\frac{2}{n}\right.\\
   &\quad\quad\left.+\frac{k\left(k+\log 2-1\right)}{\log\log n}+\frac{(2k+1.5)\log n}{n\log\log n}+\frac{(2k+2.5+\log 2)k+\log\sqrt{2\pi}}{n\log\log n}+\frac{4k+37/12}{12n^2\log\log n} \right),
\end{align*}
where we can use $(a, b)=(12.994, 0.072)$ for all $k \geq 1$ and $(a, b)=(6.445, 0.020)$ for all $k \geq 2$.
Using $\log n \geq 1865$, we get the results.

\section*{Acknowledgements}

I would like to thank Dr. Louna Sepp\"al\"a for her support during the research process and Associate Professor Tim Trudgian for giving an idea to use Lemma \ref{lemma:Var} and Corollary \ref{corollary:varLower}. I also would like to thank an anonymous referee for many useful comments improving the manuscript.

\bibliographystyle{abbrv}
\bibliography{ref}

\end{document}